\begin{document}
\maketitle
\section{Introduction}

The two parameter Mittag-Leffler function
\begin{equation}\label{eq:MLF_gen}
  E_{\rho,\mu}(z)=\sum_{k=0}^{\infty}\frac{z^k}{\Gamma(\mu+k/\rho)},\quad \rho>0,\quad \mu\in\C,\quad z\in\C
\end{equation}
is studied in the paper where $\Gamma(x)$ is the Gamma-function. This function firstly appeared in the papers of A. Wiman \cite{Wiman1905,Wiman1905a}. Later in 1953 this function was studied in the papers of Humbert and Agarwal \cite{zbMATH03082752,zbMATH03078845,zbMATH03081895} and in 1954 in the papers of M.M. Djrbashian \cite{Dzhrbashyan1954_eng,Dzhrbashian1954b_eng} (see also~\cite{Dzhrbashyan1966_eng} Chapter 3, \S 2, 4) The function $E_{\rho,\mu}(z)$ was introduced as a result of generalization of the classical Mittag-Leffler function
$ E_\rho(z)=\sum_{n=0}^{\infty}\frac{z^n}{\Gamma(1+n/\rho)}$, $\rho>0$, $z\in\C $. The function $E_{\rho}(z)$ was introduced in a number of papers published from 1902 to 1905 \cite{Mittag-Leffler1900,Mittag-Leffler1901,Mittag-Leffler1901a,Mittag-Leffler1902,Mittag-Leffler1905,Mittag-Leffler1920}. This generalization was obtained by replacing the additive unit in the argument of the Gamma-function by an arbitrary parameter $\mu\in\C$. More detailed information about the properties of the Mittag-Leffler function the reader can find in the monograph \cite{Gorenflo2014} as well as in the review papers \cite{Haubold2011,Popov2013,Rogosin2015,Gorenflo2019}.

Special attention was paid to the Mittag-Leffler function first of all due to the possibility to use this function for solving differential and integral  equations of fractional order. In such problems the Mittag-Leffler function serves as an eigenfunction of fractional differentiation operators and integration. Solutions to this kind of equations are expressed in terms of the Mittag-Leffler function. For example, the authors of the paper \cite{Baeumer2018} showed that the solution to the fractional diffusion equation with a power-law function kernel in the fractional derivative operator was expressed in terms of the Mittag-Leffler function. The fractional diffusion equation and the fractional-advection diffusion equation with boundary condition are considered in the paper \cite{Yu2018}. A distinctive feature of the studied equations from the classical analogues is the presence of the fractional derivative operator with kernel in the form of the Mittag-Leffler function. It is shown in this paper that the solutions to these equations are expressed in terms of the Mittag-Leffler function.
The works \cite{Amsalu2018,Suthar2019a} are devoted to the study of the properties of the generalized Mittag-Leffler function and the Mittag-Leffler function of several variables with the action of a generalized fractional integral operator on them. New integral equalities for the Mittag-Leffler function of several variables were obtained in the work \cite{Suthar2019}.
The Mittag-Leffler function is also used when solving a master equation describing the fractional Poisson process \cite{UCHAIKIN2008} and when solving  the fractional relaxation equation in dielectrics \cite{Uchaikin2009a}. In these two papers the Monte-Carlo method was used to express the obtained solutions. From here we can see that the question of the Mittag-Leffler function calculation is not less important than the question of studying the properties of this function. The capability of calculating the Mittag-Leffler function gives an opportunity to obtain numerical values for solutions to fractional differential equations.

This paper is focused on two main goals: firstly, to obtain integral representations for the function  $E_{\rho,\mu}(z)$ that make it possible to calculate its value with the use of standard numerical methods  and, secondly, to verify the correctness of integral representations obtained in the works \cite{Saenko2020,Saenko2020d}. It should be noted that this work is a final part of the series of papers devoted to the study of integral representations of the Mittag-Leffler function. The fundamental paper in this respect is the paper \cite{Saenko2020}. In this paper, we obtain an integral representation of the Mittag-Leffler function expressing the value of this function through the contour integral. In the subsequent work \cite{Saenko2020a} the singular points of the obtained integral representation are investigated. Particular cases are determined in which it is possible to calculate the contour integral using the theory of residues. In these particular cases, the Mittag-Leffler function can be written in terms of elementary functions. In the third work \cite{Saenko2020d} a transition was made from integration over a complex variable to integration over real variables. As a result of such a transition, the Mittag-Leffler function can be written in two forms: representation “A” and representation “B”. The representation  “A” of the Mittag-Leffler function  (see Theorem~2 and Corollary~1 in \cite{Saenko2020d})  is written in the form of a sum of the improper and definite integrals. The representation “B” (see Theorem~3 and Corollary~3 in \cite{Saenko2020d}) in the general case, is written as one improper integral. However, the representation “B” is valid for the parameter values  $\mu$, satisfying the condition $\Re\mu<1+1/\rho$. Also in this representation there are special cases in which you have to bypass a singular point. As a result, in these particular cases, the representation of “B” is much more complicated.

To verify integral representations of the function $E_{\rho,\mu}(z)$ obtained in the papers   \cite{Saenko2020,Saenko2020d} one should make a comparison of numerical values of the functions  $E_{\rho,\mu}(z)$  calculated with the use of these integral representations  and known representations for the Mittag-Leffler function,  for example, expressing it in terms of elementary functions. To make such a comparison it is necessary to be able to calculate integrals in obtained integral representations. In the general case, these integrals cannot be calculated analytically and numerical methods must be used to calculate them.  However, the integral representations obtained in \cite{Saenko2020d} are inconvenient for the direct use of numerical integration methods in them.  The fact is that in these representations the argument $z$ and the parameter $\mu$ are complex numbers. For these representations to be suitable for the use of numerical algorithms it is necessary to transform them in such a way that the integral representations of the function $E_{\rho,\mu}(z)$ consist of the sum of the real and imaginary parts, each of which would be a function of only real variables and parameters. If such a transformation is performed, then it will be possible to use standard methods of numerical integration to calculate the Mittag-Leffler function. This paper is devoted to obtaining such integral representations for the function $E_{\rho,\mu}(z)$.

It should be pointed out that the problem of calculating the Mittag-Leffler function is not a new one.  To date, most of the results associated with the calculation of the Mittag-Leffler function are based on the integral representation obtained in the book \cite{Dzhrbashyan1966_eng}. This representation expresses the Mittag-Leffler function in terms of the contour integral in the complex plane. Using this integral representation, the authors of the work \cite{Gorenflo2002b} passed from loop integration to integration over a real variable. Such a transition allowed them to represent the Mittag-Leffler function as a sum of improper and definite integrals. Further, to calculate the Mittag-Leffler function, the authors divide the complex plane into several regions and depending on what point of the complex plane where it is necessary to calculate the function $E_{\rho,\mu}(z)$ either a series representation (\ref{eq:MLF_gen}), either the integral representation they obtained, or the recursion formula for the function $E_{\rho,\mu}(z)$ is used. In the paper  \cite{Parovik2012_eng} some particular aspects of the calculation algorithm for the function  $E_{\rho,\mu}(z)$ are considered on the basis of the work \cite{Gorenflo2002b}. In the works \cite{Seybold2005,Hilfer2006,Seybold2009} to calculate the function $E_{\rho,\mu}(z)$ the authors use an approach which is similar to the approach proposed in the work  \cite{Gorenflo2002b}. In these works, the authors also split the complex plane into several regions, and depending on the region where the value of the function $E_{\rho,\mu}(z)$ is calculated, either the representation (\ref{eq:MLF_gen}), or the integral representation, or the recursion formula is used. It is important to note that the authors of the works \cite{Seybold2005,Hilfer2006,Seybold2009} as the integral representation, use the integral representation of the function $E_{\rho,\mu}(z)$ obtained in \cite{Dzhrbashyan1966_eng}. Apparently, the calculation methods proposed in the papers \cite{Gorenflo2002b,Seybold2005,Hilfer2006,Seybold2009} underlay the computer code \cite{Podlubny2012} implemented in the MatLab language. 

However, as was mentioned in the paper \cite{Popolizio2014} the computer code \cite{Podlubny2012} are likely to give an incorrect result in some cases. The author of this paper points to the error of calculating the function $E_{\rho,\mu}(z)$ with the use of code \cite{Podlubny2012} (see~Fig.~1~in~\cite{Popolizio2014} and the discussion of it ). The authors give a simple example: they compare the values of the function $|E_{1/2,1}(-i\lambda)|$ for $\lambda\in[0,20]$ calculated with the help of the computer code \cite{Podlubny2012} and the representation
\begin{equation*}
  E_{1/2,1}(z)=e^{z^2}\mathrm{erfc}(-z).
\end{equation*}
It turned out that the results generated by the code \cite{Podlubny2012} in a sufficiently large interval of values $\lambda$ are incorrect.

A completely different approach to the calculation of the Mittag-Leffler function was proposed in \cite{Popolizio2014,Garrappa2015,Garrappa2018}. In these papers to calculate the values of the function $E_{\rho,\mu}(z)$ the authors propose to calculate the inverse Laplace transform of the Laplace image of the function $E_{\rho,\mu}(z)$. It enables them to exclude the integral representations for the function $E_{\rho,\mu}(z)$ from consideration completely and develop a totally different approach for the calculation of the Mittag-Leffler function. This algorithm was realized in the form of a computer code for MatLab and is available in \cite{Garrappa2015a}.

Thus, today there is only one algorithm of calculating the Mittag-Leffler function (which is known to the author of this paper at least) that allows one to calculate correctly the values of the function over the entire complex plane. Nevertheless, to verify the correctness of integral representations for the Mittag-Leffler function obtained in this paper, we are going to compare the calculation results from the obtained formulas with the calculation results using the known representations for the Mittag-Leffler function. Such an approach seems to be justifiable in the issue of verifying the correctness of new representations obtained. In simpler cases, for example, at $\rho=1$, the Mittag-Leffler function can be represented in terms of elementary functions.  In this paper we are going to use these representations to verify the correctness of the formulas obtained.

We need to make  some remarks. In the paper \cite{Saenko2020d} it was shown that the representations  “A” and “B” can be described either by four parameters ($\rho,\mu,\delta_{1\rho},\delta_{2\rho}$), or by three parameters ($\rho,\mu,\delta_{\rho}$), or by two parameters ($\rho,\mu$). The parameters $\rho$ and $\mu$ are related to the Mittag-Leffler function and the parameters $\delta_{1\rho},\delta_{2\rho}, \delta_\rho$ determine the contour of integration in the integral representation of the function $E_{\rho,\mu}(z)$. At the end of the paper  \cite{Saenko2020d} its own notation was used for each set of parameters.  For the four-parameter representation, the notation “Parametrization 1” was introduced, for the three-parameter representation, the notation “Parametrization 2” was introduced, and for the two-parameter representation, “Parameterization 3” was introduced. Further in this paper, we will adhere to this terminology.

Finally, it should be pointed out that in this paper the letters $\rho$, $\mu$ are used to denote the parameters of the Mittag-Leffler function (\ref{eq:MLF_gen}). These notations were introduced by M.M. Djrbashian in his works \cite{Dzhrbashyan1954_eng,Dzhrbashian1954b_eng,Dzhrbashyan1966_eng,Dzhrbashian1993} (see also \cite{Popov2013}). Parameters $\rho$ and $\mu$ of the function (\ref{eq:MLF_gen}) are connected with the commonly used notations $\alpha$ and $\beta$ of the parameters of the Mittag-Leffler function by simple relations $\rho=1/\alpha, \mu=\beta$.

\section{The calculation of the Mittag-Leffler function}

We will start our consideration with the representation “A” of Parameterization 1 formulated in  \cite{Saenko2020d} (see Theorem~2 in \cite{Saenko2020d}).    This representation is convenient for analytical studies of the Mittag-Leffler function. However, they are not suitable for numerical calculations, since these formulas include the variables $z$, $\mu$, which are complex numbers. To be able to calculate the value of the function $E_{\rho,\mu}(z)$ these representations must be transformed in such a way that they could be divided into real and imaginary parts, each of which will depend only on real arguments and parameters.   As a result, the following theorem is valid:

\begin{theorem}\label{lemm:MLF_int2}
For any real $\rho>1/2$, $\epsilon>0$  and $\delta_{1\rho}$, $\delta_{2\rho}$, satisfying the conditions
\begin{equation}\label{eq:deltaRho_cond_lemm_int2}
 \pi/(2\rho)<\delta_{1\rho}\leqslant\min\left(\pi,\pi/\rho\right),\quad \pi/(2\rho)<\delta_{2\rho}\leqslant\min\left(\pi,\pi/\rho\right),
\end{equation}
any complex $\mu=\mu_R+i\mu_I$ and any complex $z=t e^{i\theta}$, satisfying the condition
\begin{equation}\label{eq:argZ_cond_lemm_int2}
  \pi/(2\rho)-\delta_{2\rho}+\pi<\theta<-\pi/(2\rho)+\delta_{1\rho}+\pi,
\end{equation}
the Mittag-Leffler function can be represented in the form
\begin{multline}\label{eq:MLF_int2}
  E_{\rho,\mu}(z)=\int_{1+\epsilon}^{\infty}K_{\rho,\mu}^{Re}(r,-\delta_{1\rho},\delta_{2\rho}, t, \theta)dr
  + \int_{-\delta_{1\rho}-\pi}^{\delta_{2\rho}-\pi}P_{\rho,\mu}^{Re}(1+\epsilon,\varphi,t, \theta)d\varphi\\
  +i\left\{ \int_{1+\epsilon}^{\infty}K_{\rho,\mu}^{Im}(r,-\delta_{1\rho},\delta_{2\rho},t, \theta)dr
  + \int_{-\delta_{1\rho}-\pi}^{\delta_{2\rho}-\pi}P_{\rho,\mu}^{Im}(1+\epsilon,\varphi,t, \theta)d\varphi\right\}.
\end{multline}
Here
\begin{align}\label{eq:K_Re_lemm_int2}
K_{\rho,\mu}^{Re}&(r,\varphi_1,\varphi_2,t, \theta) =\frac{\rho}{2\pi}\frac{(t r)^{\rho(1-\mu_R)}} {(r^2+2r\cos\varphi_1+1)(r^2+2r\cos\varphi_2+1)}\nonumber\\
\times  & [f(r, \varphi_2-\pi,t, \theta)(r^2+2r\cos\varphi_1+1)\nonumber\\
 \times & (r\sin(\xi(r,\varphi_2-\pi,t, \theta)) +\sin(\xi(r,\varphi_2-\pi,t, \theta)+\varphi_2))\nonumber\\
-&f(r, \varphi_1-\pi,t, \theta)(r^2+2r\cos\varphi_2+1)\nonumber\\
\times&(r\sin(\xi(r,\varphi_1-\pi,t, \theta)) +\sin(\xi(r,\varphi_1-\pi,t, \theta)+\varphi_1))],
\end{align}

\begin{align}\label{eq:K_Im_lemm_int2}
K_{\rho,\mu}^{Im}&(r,\varphi_1,\varphi_2,t, \theta) =\frac{\rho}{2\pi}\frac{(t r)^{\rho(1-\mu_R)}}{(r^2+2r\cos\varphi_1+1)(r^2+2r\cos\varphi_2+1)}\nonumber\\
\times &[f(r, \varphi_1-\pi,t, \theta) (r^2+2r\cos\varphi_2+1)\nonumber\\
\times &(r\cos(\xi(r,\varphi_1-\pi,t, \theta)) +\cos(\xi(r,\varphi_1-\pi,t, \theta)+\varphi_1))\nonumber\\
-&f(r,\varphi_2-\pi,t, \theta) (r^2+2r\cos\varphi_1+1)\nonumber\\
\times & (r\cos(\xi(r,\varphi_2-\pi,t, \theta)) +\cos(\xi(r,\varphi_2-\pi,t, \theta)+\varphi_2))],
\end{align}
\begin{align}
P_{\rho,\mu}^{Re}(r,\varphi,t, \theta) &=\frac{\rho}{2\pi}\frac{(t r)^{\rho(1-\mu_R)}r f(r,\varphi,t, \theta)} {(r^2-2r\cos\varphi+1)}\nonumber\\
& \times  \left(r\cos(\xi(r,\varphi,t, \theta))- \cos(\xi(r,\varphi,t, \theta)+\varphi)\right),\label{eq:P_Re_lemm_int2}\\
P_{\rho,\mu}^{Im}(r,\varphi,t, \theta)&=\frac{\rho}{2\pi}\frac{(t r)^{\rho(1-\mu_R)}r f(r,\varphi,t, \theta)} {(r^2-2r\cos\varphi+1)}\nonumber\\
& \times  \left(r\sin(\xi(r,\varphi,t, \theta))- \sin(\xi(r,\varphi,t, \theta)+\varphi)\right),\label{eq:P_Im_lemm_int2}
\end{align}
where
\begin{align}
  f(r,\varphi,t, \theta) &= \exp\left\{(t r)^\rho\cos(\rho(\theta+\varphi))+ \rho\mu_I(\theta+\varphi)\right\},\label{eq:f_int2} \\
  \xi(r,\varphi,t, \theta) &= (t r)^\rho\sin(\rho(\theta+\varphi))+ \rho(1-\mu_R)(\theta+\varphi)-\rho\mu_I\ln(t r).\label{eq:xi_int2}
\end{align}
\end{theorem}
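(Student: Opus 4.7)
The plan is to start from the complex-valued integral representation ``A'' given in Theorem~2 of \cite{Saenko2020d}, which already expresses $E_{\rho,\mu}(z)$ as a sum of an improper integral over the ray $r\in[1+\epsilon,\infty)$ and a definite integral over the arc $\varphi\in[-\delta_{1\rho}-\pi,\delta_{2\rho}-\pi]$, but with integrands that still depend on the complex parameter $\mu$ and complex argument $z$. The goal is purely to split each integrand into its real and imaginary parts under the substitutions $\mu=\mu_R+i\mu_I$ and $z=te^{i\theta}$; no new analytic content is needed, because the hypotheses $\rho>1/2$, $\epsilon>0$, \eqref{eq:deltaRho_cond_lemm_int2} and \eqref{eq:argZ_cond_lemm_int2} are exactly those of Theorem~2 of \cite{Saenko2020d} and hence the underlying representation is already valid.

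First I would reduce the integrand to a product of an unambiguous positive real modulus times a pure phase. The two sources of complexity are the power $(tr)^{\rho(1-\mu)}$ and the exponential $\exp\bigl((tr)^{\rho}e^{i\rho(\theta+\varphi)}\bigr)$ that appear in the kernels of representation ``A''. Writing
\begin{equation*}
(tr)^{\rho(1-\mu)}=(tr)^{\rho(1-\mu_R)}\exp\bigl(-i\rho\mu_I\ln(tr)\bigr),\qquad
e^{(tr)^\rho e^{i\rho(\theta+\varphi)}}=e^{(tr)^\rho\cos(\rho(\theta+\varphi))}e^{i(tr)^\rho\sin(\rho(\theta+\varphi))},
\end{equation*}
and collecting all real parts of the exponents into one factor and all imaginary parts into another, one recovers exactly the positive weight $f(r,\varphi,t,\theta)$ of \eqref{eq:f_int2} and the real phase $\xi(r,\varphi,t,\theta)$ of \eqref{eq:xi_int2}. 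After this step, the integrand of each segment of representation ``A'' is $f\,e^{i\xi}$ multiplied by a rational factor of the form $1/(re^{i\varphi}-1)$ (on the arc, with $r=1+\epsilon$) or a difference of two such factors evaluated at the two ray angles (on the outer ray, accounting for the $-\pi$ offset between the arc parameter $\varphi$ and the ray parameters $\varphi_j-\pi$).

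Next I would rationalize those complex denominators. For the arc kernel, multiplying $1/(re^{i\varphi}-1)$ by its conjugate yields the real denominator $r^2-2r\cos\varphi+1$ of \eqref{eq:P_Re_lemm_int2}--\eqref{eq:P_Im_lemm_int2} and numerator of the form $r\,e^{i\xi}-e^{i(\xi+\varphi)}$; reading off real and imaginary parts gives precisely the trigonometric combinations $r\cos\xi-\cos(\xi+\varphi)$ and $r\sin\xi-\sin(\xi+\varphi)$ appearing in the statement. For the ray kernel, the analogous rationalization applied simultaneously at $\varphi_1=-\delta_{1\rho}$ and $\varphi_2=\delta_{2\rho}$ (with the $\varphi_j-\pi$ shift inside $f$ and $\xi$) produces the product denominator $(r^2+2r\cos\varphi_1+1)(r^2+2r\cos\varphi_2+1)$ of \eqref{eq:K_Re_lemm_int2}--\eqref{eq:K_Im_lemm_int2}, and distributing the differences $r\sin\xi_j+\sin(\xi_j+\varphi_j)$ and $r\cos\xi_j+\cos(\xi_j+\varphi_j)$ across the two cross-factors gives the explicit real and imaginary kernels displayed in the theorem.

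The main obstacle is not analytic but bookkeeping: one must keep straight the $-\pi$ shift between the arc parameter and the ray parameters, the sign flip $\cos(\varphi-\pi)=-\cos\varphi$ that is responsible for the $+2r\cos\varphi_j$ (rather than $-2r\cos\varphi_j$) in the ray denominators, and the pairing convention by which the $\varphi_1$-term is multiplied by the $\varphi_2$-denominator factor in the numerator (and vice versa). Once these sign and index conventions are fixed consistently with the orientation of the contour in Theorem~2 of \cite{Saenko2020d}, separating real and imaginary parts is a direct trigonometric computation, and matching the resulting expressions term by term with \eqref{eq:K_Re_lemm_int2}--\eqref{eq:P_Im_lemm_int2} completes the proof.
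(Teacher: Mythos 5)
Your proposal is correct and follows essentially the same route as the paper's proof: both start from representation ``A'' of Theorem~2 in \cite{Saenko2020d}, substitute $z=te^{i\theta}$, $\mu=\mu_R+i\mu_I$, collect the moduli and phases of $(tr)^{\rho(1-\mu)}$ and the exponential into the functions $f$ and $\xi$, rationalize the complex denominators, and read off the real and imaginary parts. Your remarks on the $-\pi$ shift and the resulting sign of $2r\cos\varphi_j$ in the ray denominators match exactly the bookkeeping the paper carries out through the intermediate functions $\eta$, $A_{\rho,\mu}$ and $\chi$.
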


\begin{proof}

 1)  In the article \cite{Saenko2020d} (see Theorem~2 in \cite{Saenko2020d}) it was shown that the Mittag-Leffler function can be represented in the form
\begin{equation}\label{eq:MLF_tmp_lemm_int2}
  E_{\rho,\mu}(z)=\int_{1+\epsilon}^{\infty}K_{\rho,\mu}(r,-\delta_{1\rho},\delta_{2\rho},z)dr+
  \int_{-\delta_{1\rho}-\pi}^{\delta_{2\rho}-\pi}P_{\rho,\mu}(1+\epsilon,\varphi,z)d\varphi.
\end{equation}
Here $K_{\rho,\mu}(r,\varphi_1,\varphi_2,z)$ and $P_{\rho,\mu}(r,\varphi,z)$ are defined by the expressions  (8)~and~(11) in~\cite{Saenko2020d}. We represent the complex number $z$ in the form $z=t e^{i\theta}$, and the complex parameter $\mu$ in the form $\mu=\mu_R+i\mu_I$ and put these representations in the kernels $K_{\rho,\mu}(r,\varphi_1,\varphi_2,z)$ and $P_{\rho,\mu}(r,\varphi,z)$.

First, we obtain the expression for the kernel $K_{\rho,\mu}(r,\varphi_1,\varphi_2,z)$.  From (8) in \cite{Saenko2020d} it is clear that this expression is composed of the function $\eta(r,\varphi,z)$, defined by the expression  (10) from \cite{Saenko2020d}. As a result, this function can be represented in the form
\begin{equation*}
  \eta(r,\varphi,z)=(t r)^\rho e^{i\rho(\theta-\pi)}\sin(\rho\varphi)+\rho(1-\mu_R-i\mu_I)\varphi.
\end{equation*}

Using this representation in $A_{\rho,\mu}(r,\omega_1,\omega_2,z)$, which is defined by the equation (9) from \cite{Saenko2020d}, we obtain
\begin{multline}\label{eq:A_lemm_int2_tmp0}
  A_{\rho,\mu}(r,\omega_1,\omega_2,z)=
   \exp\left\{(rt)^\rho e^{i\rho(\theta+\omega_1-\pi)}\right\}e^{i\rho(1-\mu_R)\omega_1+\rho\mu_I\omega_1} \\
   \times(r^2+2r\cos\omega_2+1) \left(r+e^{i\omega_1}\right).
\end{multline}
For the factor $(zr)^{\rho(1-\mu)} e^{-i\rho\pi(1-\mu)}$, included in the integrand $K_{\rho,\mu}(r,\varphi_1,\varphi_2,z)$ (see (8) in \cite{Saenko2020d}) we have
\begin{equation}\label{eq:zr_lemm_int2_tmp0}
  (zr)^{\rho(1-\mu)} e^{-i\rho\pi(1-\mu)}=(t r)^{\rho(1-\mu_R-i\mu_I)}e^{i\rho(\theta-\pi)(1-\mu_R-i\mu_I)}.
\end{equation}
Using (\ref{eq:A_lemm_int2_tmp0}) and (\ref{eq:zr_lemm_int2_tmp0}) for the factor $(zr)^{\rho(1-\mu)} e^{-i\rho\pi(1-\mu)} A_{\rho,\mu}(r,\varphi_1,\varphi_2,z)$, included in $K_{\rho,\mu}(r,\varphi_1,\varphi_2,z)$ (see (8) in \cite{Saenko2020d}) and making some transformations we obtain
\begin{multline}\label{eq:K_2nd_lemm_int2}
  (zr)^{\rho(1-\mu)} e^{-i\rho\pi(1-\mu)} A_{\rho,\mu}(r,\varphi_1,\varphi_2,z)= (t r)^{\rho(1-\mu_R-i\mu_I)} e^{i\rho(\theta-\pi) (1-\mu_R-i\mu_I)}\\
   \times \exp\left\{(rt)^\rho e^{i\rho(\theta+\varphi_1-\pi)}\right\}e^{i\rho(1-\mu_R)\varphi_1+\rho\mu_I\varphi_1} (r^2+2r\cos\varphi_2+1) \left(r+e^{i\varphi_1}\right)\\
   =(t r)^{\rho(1-\mu_R)} f(r,\varphi_1-\pi,t, \theta) (r^2+2r\cos\varphi_2+1)\\
    \times \left(r\cos(\xi(r,\varphi_1-\pi,t, \theta))+\cos(\xi(r,\varphi_1-\pi,t, \theta)+\varphi_1)\right. \\
   +\left.i\left\{r\sin(\xi(r,\varphi_1-\pi,t, \theta)) +\sin(\xi(r,\varphi_1-\pi,t, \theta)+\varphi_1)\right\} \right),
\end{multline}
where the notations were introduced
\begin{align}
  f(r,\varphi,t, \theta) &= \exp\{(t r)^\rho \cos(\rho(\theta+\varphi))+\rho\mu_I(\theta+\varphi)\}, \label{eq:f_tmp_lemm_int2}\\
  \xi(r,\varphi,t, \theta) &= (t r)^\rho \sin(\rho(\theta+\varphi)) +\rho(1-\mu_R)(\theta+\varphi) -\rho\mu_I\ln(t r).\label{eq:xi_tmp_lemm_int2}
\end{align}
Similarly, we obtain
\begin{multline}\label{eq:K_1st_lemm_int2}
   (zr)^{\rho(1-\mu)} e^{-i\rho\pi(1-\mu)} A_{\rho,\mu}(r,\varphi_2,\varphi_1,z)
   =(t r)^{\rho(1-\mu_R)} f(r,\varphi_2-\pi,t, \theta) (r^2+2r\cos\varphi_1+1)\\
    \times \left(r\cos(\xi(r,\varphi_2-\pi,t, \theta))+\cos(\xi(r,\varphi_2-\pi,t, \theta)+\varphi_2)\right.\\
   +i\left.\left\{r\sin(\xi(r,\varphi_2-\pi,t, \theta)) +\sin(\xi(r,\varphi_2-\pi,t, \theta)+\varphi_1)\right\} \right).
\end{multline}

Substituting now (\ref{eq:K_2nd_lemm_int2}) and (\ref{eq:K_1st_lemm_int2}) in  definition of the integrand $K_{\rho,\mu}(r,\varphi_1,\varphi_2,z)$ (see (8) in \cite{Saenko2020d}) and making some transformations we get
\begin{multline}\label{eq:KRe+KIm_lemm_int2}
  K_{\rho,\mu}(r,\varphi_1,\varphi_2,z)=\frac{\rho}{2\pi i}\frac{(t r)^{\rho(1-\mu_R)}}{(r^2+2r\cos\varphi_1+1)(r^2+2r\cos\varphi_2+1)}\\
   \times\left[ f(r,\varphi_2-\pi,t, \theta) (r^2+2r\cos\varphi_1+1)\right.\\
  \left(r\cos(\xi(r,\varphi_2-\pi,t, \theta))+\cos(\xi(r,\varphi_2-\pi,t, \theta)+\varphi_2)\right.\\
   +\left.i\left\{r\sin(\xi(r,\varphi_2-\pi,t, \theta)) +\sin(\xi(r,\varphi_2-\pi,t, \theta)+\varphi_2)\right\} \right)\\
   - f(r,\varphi_1-\pi,t, \theta) (r^2+2r\cos\varphi_2+1)\\
    \left(r\cos(\xi(r,\varphi_1-\pi,t, \theta))+\cos(\xi(r,\varphi_1-\pi,t, \theta)+\varphi_1)\right.\\
   +i\left.\left.\left\{r\sin(\xi(r,\varphi_1-\pi,t, \theta)) +\sin(\xi(r,\varphi_1-\pi,t, \theta)+\varphi_1)\right\} \right) \right]\\
   =K_{\rho,\mu}^{Re}(r,\varphi_1,\varphi_2,t, \theta) + i K_{\rho,\mu}^{Im}(r,\varphi_1,\varphi_2,t, \theta),
\end{multline}
where $K_{\rho,\mu}^{Re}(r,\varphi_1,\varphi_2,t, \theta)$ and $K_{\rho,\mu}^{Im}(r,\varphi_1,\varphi_2,t, \theta)$ are  real and imaginary parts of the obtained expressions and have the form (\ref{eq:K_Re_lemm_int2}) and (\ref{eq:K_Im_lemm_int2}) respectively.
Now we get the expression for $P_{\rho,\mu}(r,\varphi,z)$.  For this, we represent the complex numbers $z$ and $\mu$ in the form $z=t e^{i\theta}$ and $\mu=\mu_R+i\mu_I$ and put them in the definition of this function (see (11) in \cite{Saenko2020d}). As a result, for the function $\chi(r,\varphi,z)$ which is defined by the formula (12) in \cite{Saenko2020d}, we have
\begin{equation*}
  \chi(r,\varphi,z)=(t r)^\rho e^{i\rho\theta} \sin(\rho\varphi)+\rho(1-\mu_R)\varphi-i\rho\mu_I\varphi.
\end{equation*}
Using now this expression in the kernel $P_{\rho,\mu}(r,\varphi,z)$ (see~(11) in \cite{Saenko2020d}) and making some transformations, we obtain
\begin{multline}\label{eq:PRe+PIm_lemm_int2}
  P_{\rho,\mu}(r,\varphi,z)\equiv P_{\rho,\mu}(r,\varphi,t,\theta)=\frac{\rho}{2\pi}\frac{\left(t re^{i\theta}\right)^{\rho(1-\mu_R-i\mu_I)}} {r^2-2r\cos\varphi+1} \exp\left\{(t r)^\rho e^{i\rho\theta}\cos(\rho\varphi)\right\}\\
  \times\exp\left\{i\left[(t r)^\rho e^{i\rho\theta} \sin(\rho\varphi)+\rho(1-\mu_R)\varphi-i\rho\mu_I\varphi\right]\right\} r(r-e^{i\varphi})\\
   =\frac{\rho}{2\pi}\frac{(t r)^{\rho(1-\mu_R)}rf(r,\varphi,t, \theta)}{r^2-2r\cos\varphi+1} \Bigl[r\cos(\xi(r,\varphi,t, \theta))
   -\cos(\xi(r,\varphi,t, \theta)+\varphi)\\
   +i\left\{r\sin(\xi(r,\varphi,t, \theta))-\sin(\xi(r,\varphi,t, \theta)+\varphi)\right\}\Bigr]
   = P_{\rho,\mu}^{Re}(r,\varphi,t, \theta)+i P_{\rho,\mu}^{Im}(r,\varphi,t, \theta).
\end{multline}
Here $f(r,\varphi,t, \theta)$ and $\xi(r,\varphi,t, \theta)$ have the form (\ref{eq:f_tmp_lemm_int2}) and (\ref{eq:xi_tmp_lemm_int2}), and $P_{\rho,\mu}^{Re}(r,\varphi,t, \theta)$ and $P_{\rho,\mu}^{Im}(r,\varphi,t, \theta)$ are real and imaginary parts of the obtained expression and have the form (\ref{eq:P_Re_lemm_int2}) and (\ref{eq:P_Im_lemm_int2}) respectively. Now substituting (\ref{eq:KRe+KIm_lemm_int2}) and (\ref{eq:PRe+PIm_lemm_int2}) in the representation (\ref{eq:MLF_tmp_lemm_int2}) we obtain the representation (\ref{eq:MLF_int2}).
\begin{flushright}
  $\Box$
\end{flushright}
\end{proof}

The obtained representation allows us to calculate the value of the Mittag-Leffler function. As we can see,  the real and imaginary parts of the representation (\ref{eq:MLF_int2}) consist of two integrals: improper and definite ones.

To verify the correctness of integral representations obtained it is necessary to compare the value of the function calculated with the use of these formulas with the function value calculated by means of the known representations for the Mittag-Leffler function, for example, expressed in terms of elementary functions. It is known that at parameter values  $\rho=1$ and $\mu=0,\pm1,\pm2,\pm3,\dots$ the Mittag-Leffler function has the form
\begin{align}
  E_{1,n}(z)&=e^z z^{1-n},\quad n\leqslant1,\label{eq:MLF_mu<=1}\\
  E_{1,n}(z)&=z^{1-n}\left(e^z-\sum_{k=0}^{n-2}\frac{z^k}{k!}\right),\quad n\geqslant2.\label{eq:MLF_mu>=2}
\end{align}
These formulas are known. For example, the formula (\ref{eq:MLF_mu<=1}) can be found in the book \cite{Gorenflo2014} (see formula (4.6.1) in section 4.6) and formula (\ref{eq:MLF_mu>=2}) can be found in  \cite{Podlubny1999_ch1,Mathai2008,Gorenflo2014}. Derivation of the latter formula from the representation (\ref{eq:MLF_gen}) directly can be found in the works \cite{Podlubny1999_ch1,Mathai2008}.

Alternative derivation of the formulas (\ref{eq:MLF_mu<=1}) and (\ref{eq:MLF_mu>=2}) is given in the paper \cite{Saenko2020a}. In this paper it was shown that these formulas could be derived directly both from the integral representation for the function $E_{\rho,\mu}(z)$ introduced in the paper  \cite{Saenko2020} and from representation (\ref{eq:MLF_gen}). Therefore, to verify the correctness of integral representations obtained in this work for the function $E_{\rho,\mu}(z)$ we are going to compare the function values calculated by means of formulas (\ref{eq:MLF_mu<=1}), (\ref{eq:MLF_mu>=2}) with the function values calculated with the use of integral representations.

In general case, it is impossible to calculate the integrals in the representation (\ref{eq:MLF_int2}) analytically. However, with the use of numerical methods the calculation of these integrals is quite simple. For the numerical calculation of the integrals in (\ref{eq:MLF_int2}) the Gauss-Kronrod numerical integration method was used. In particular, implementations of this method were used in the GSL library \cite{gsl_lib}. The calculation results of the function $E_{\rho,\mu}(z)$ by the formula (\ref{eq:MLF_int2}) are given in Fig.~\ref{fig:MLF_int2_rho1_mu0}~and~\ref{fig:MLF_int2_rho1_mu3}.

\begin{figure}
  \centering
  \includegraphics[width=0.42\textwidth]{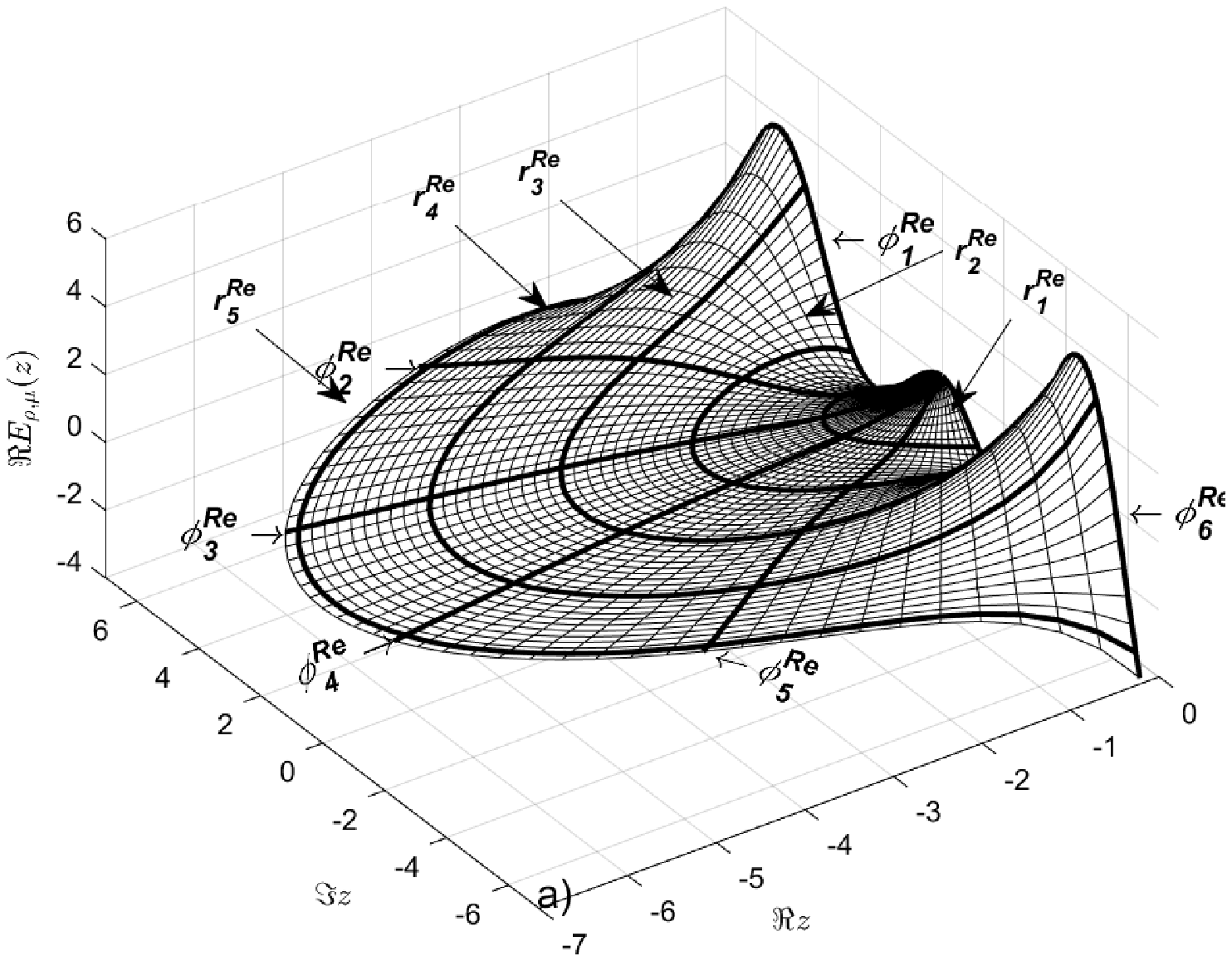}\hfill
  \includegraphics[width=0.42\textwidth]{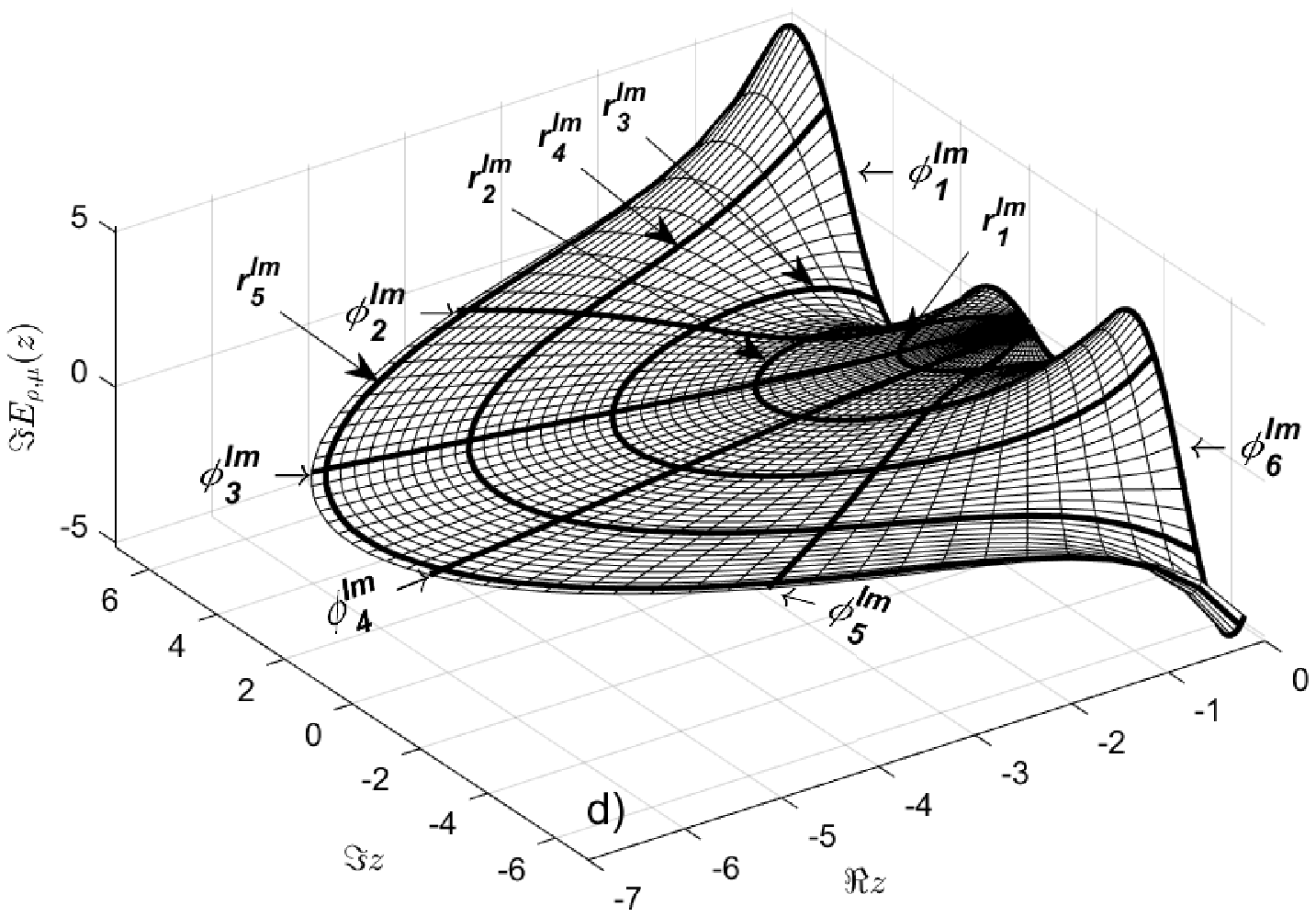}\\[4mm]
  \includegraphics[width=0.42\textwidth]{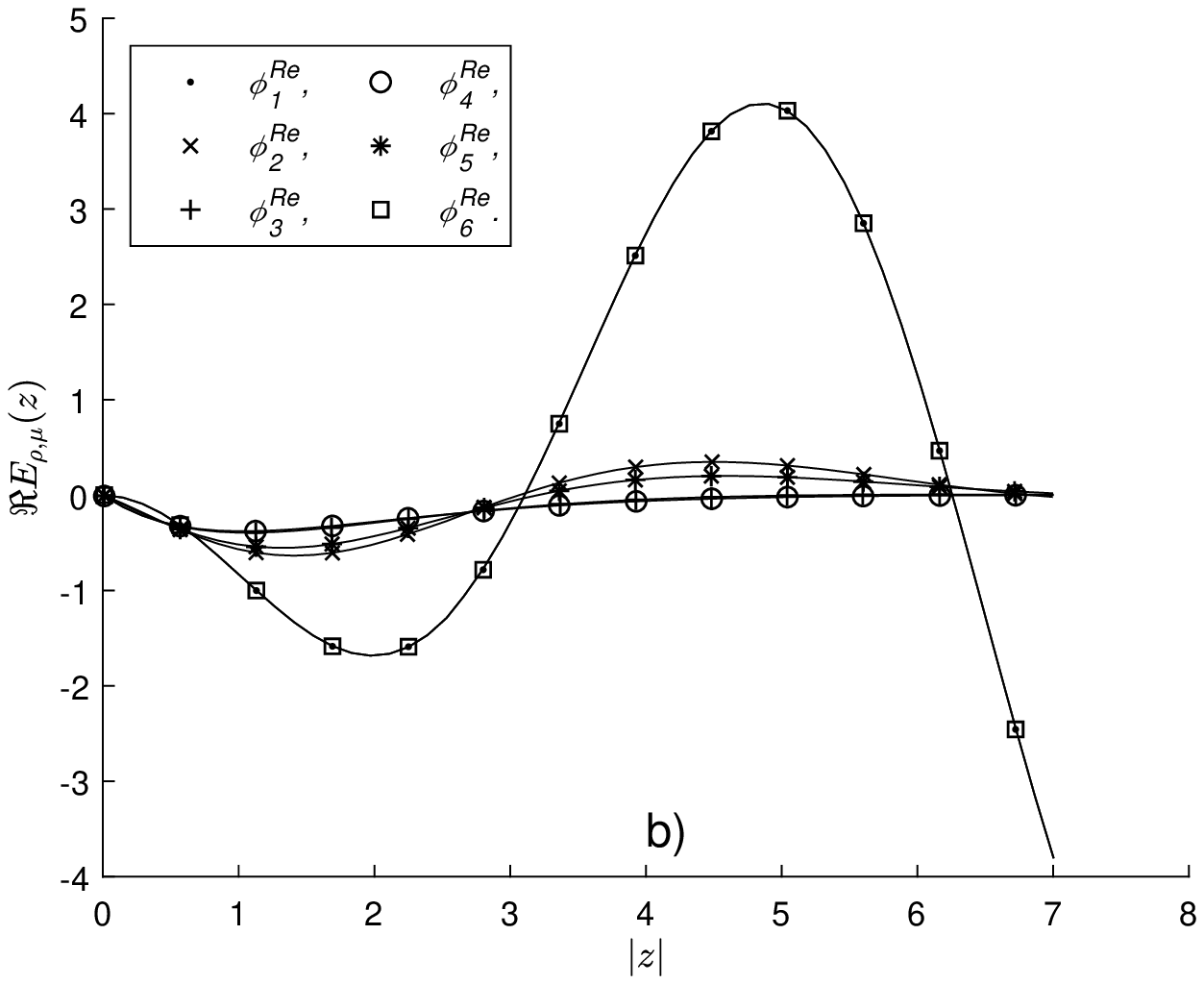}\hfill
  \includegraphics[width=0.42\textwidth]{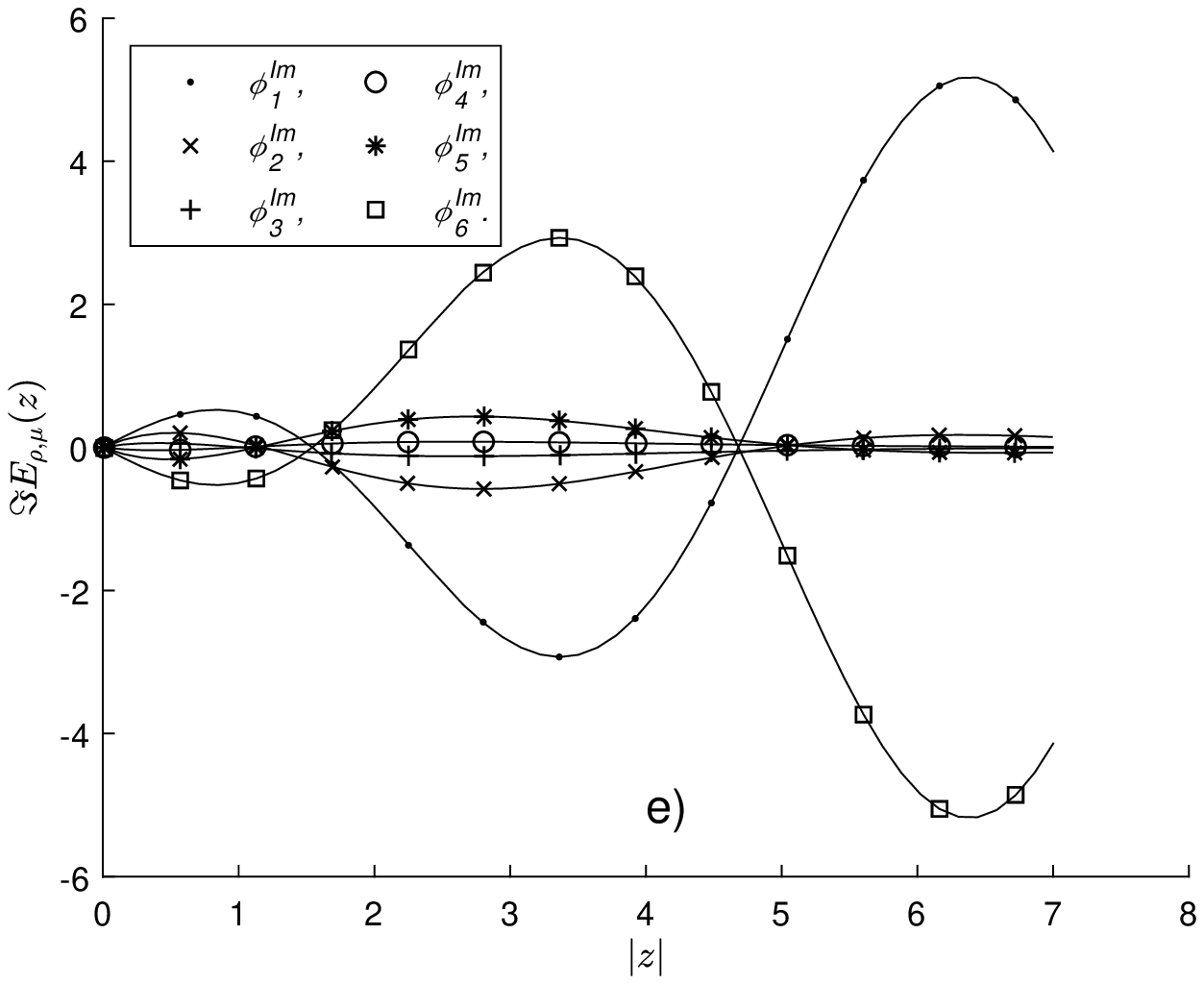}\\[4mm]
  \includegraphics[width=0.42\textwidth]{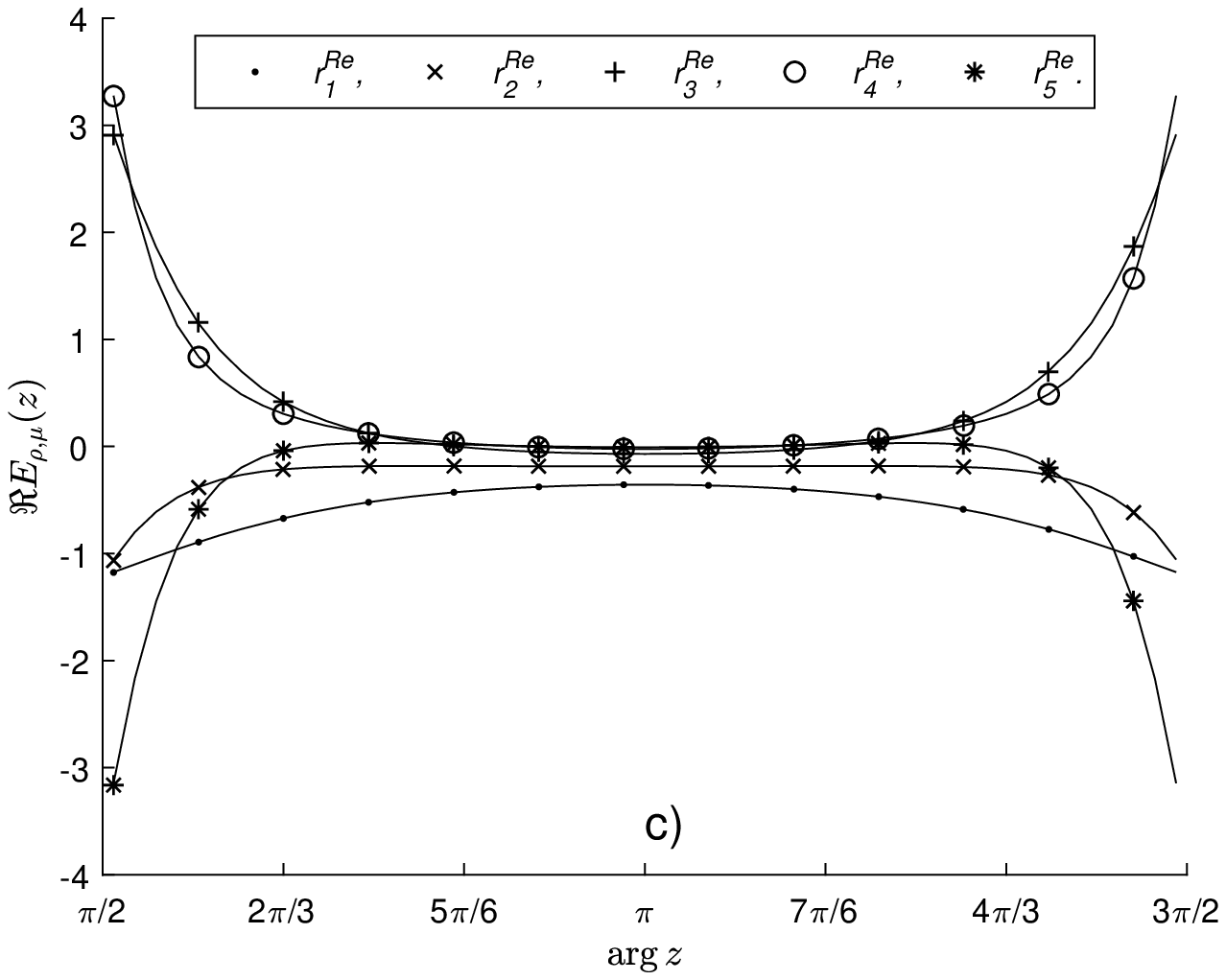}\hfill
  \includegraphics[width=0.42\textwidth]{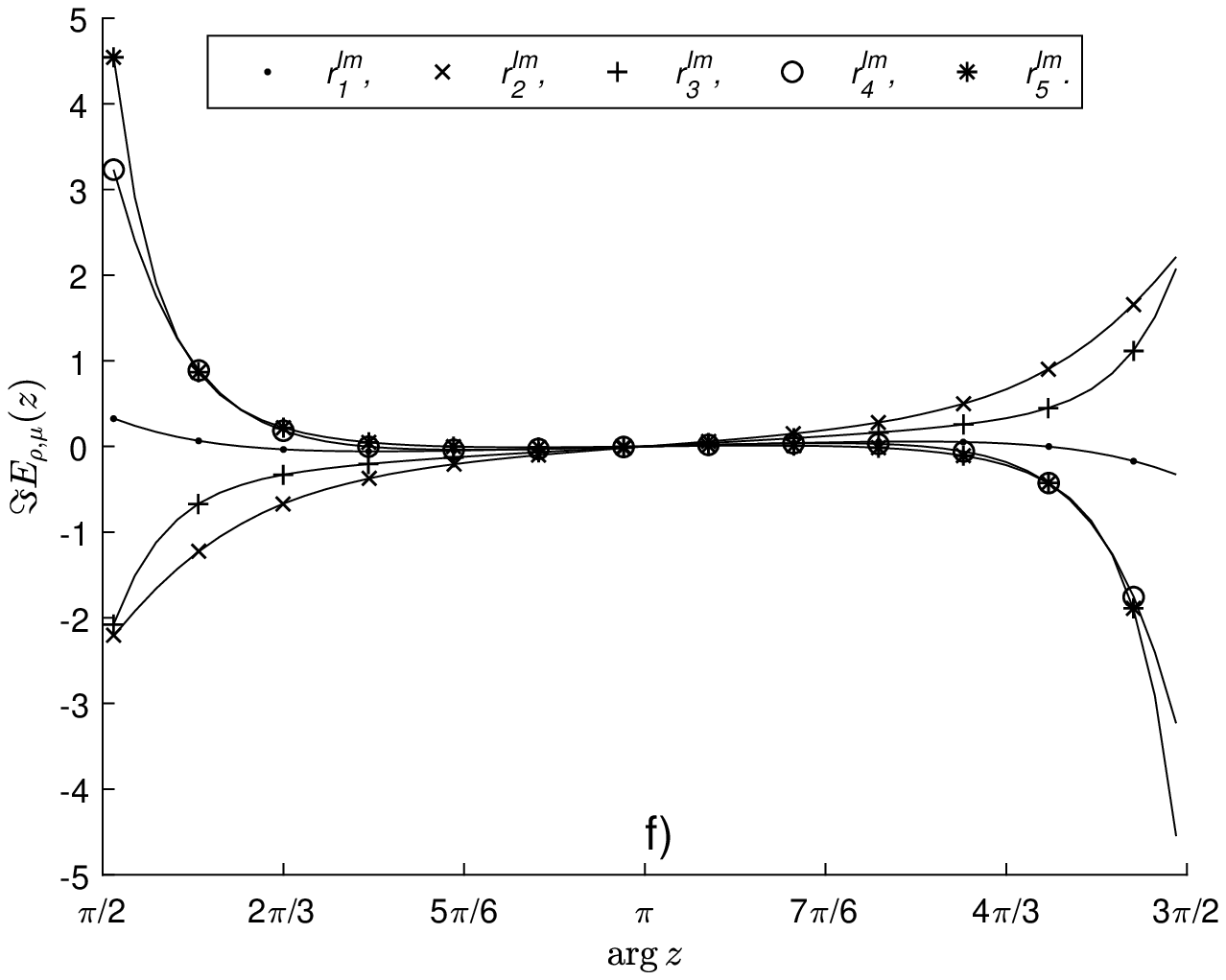}
  \caption{The function $E_{\rho,\mu}(z)$ for $\rho=1, \mu=0$ and $0.01\leqslant|z|\leqslant7, \pi/2<\arg z<3\pi/2$.  On the figures a) and d) the surfaces – the formula (\ref{eq:MLF_int2}), the curves – the formula (\ref{eq:MLF_mu<=1}). On the figures b), c), e) and f) the curves - the formula (\ref{eq:MLF_int2}), the points – the formula (\ref{eq:MLF_mu<=1})}\label{fig:MLF_int2_rho1_mu0}
\end{figure}

\begin{figure}
  \centering
  \includegraphics[width=0.43\textwidth]{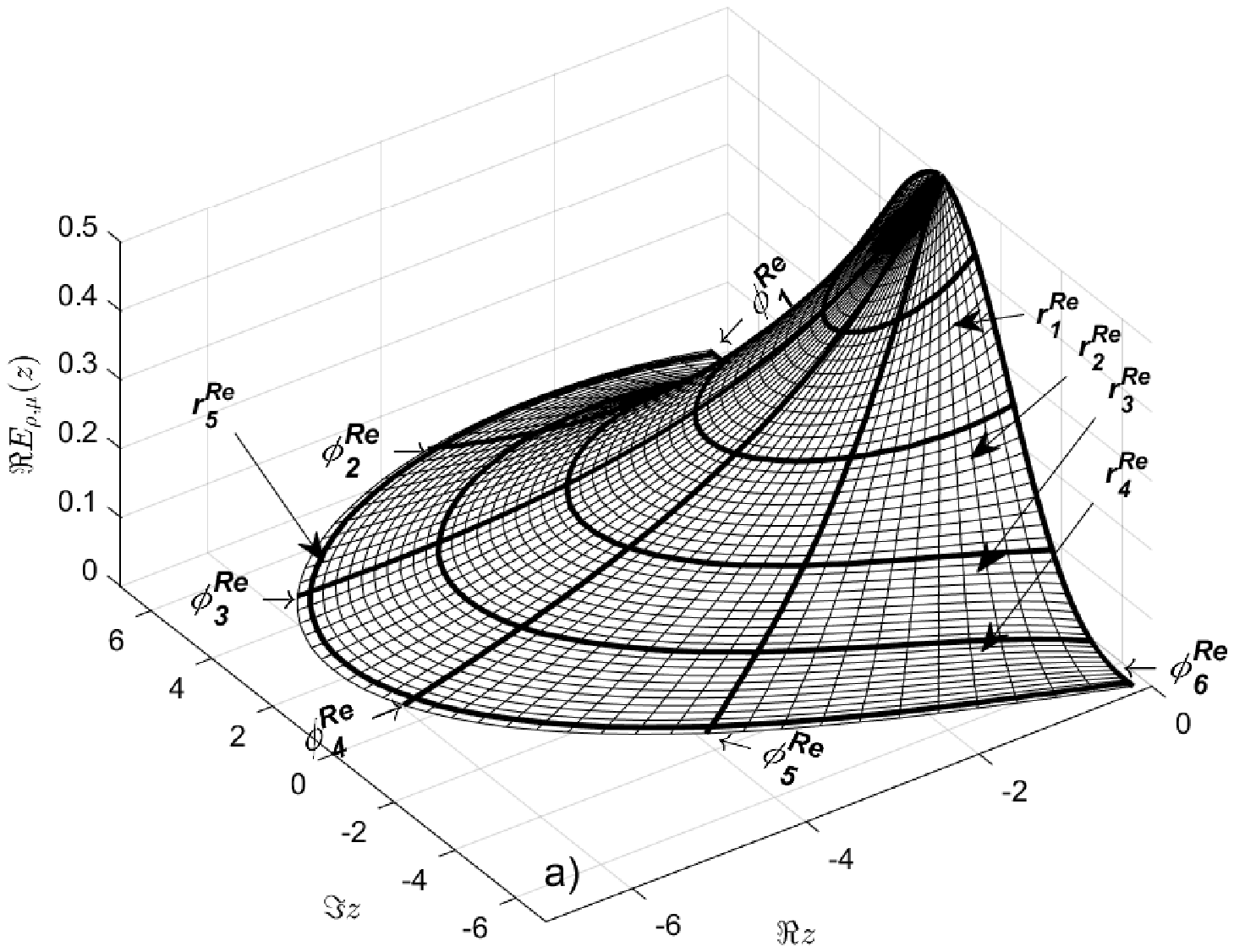}\hfill
  \includegraphics[width=0.43\textwidth]{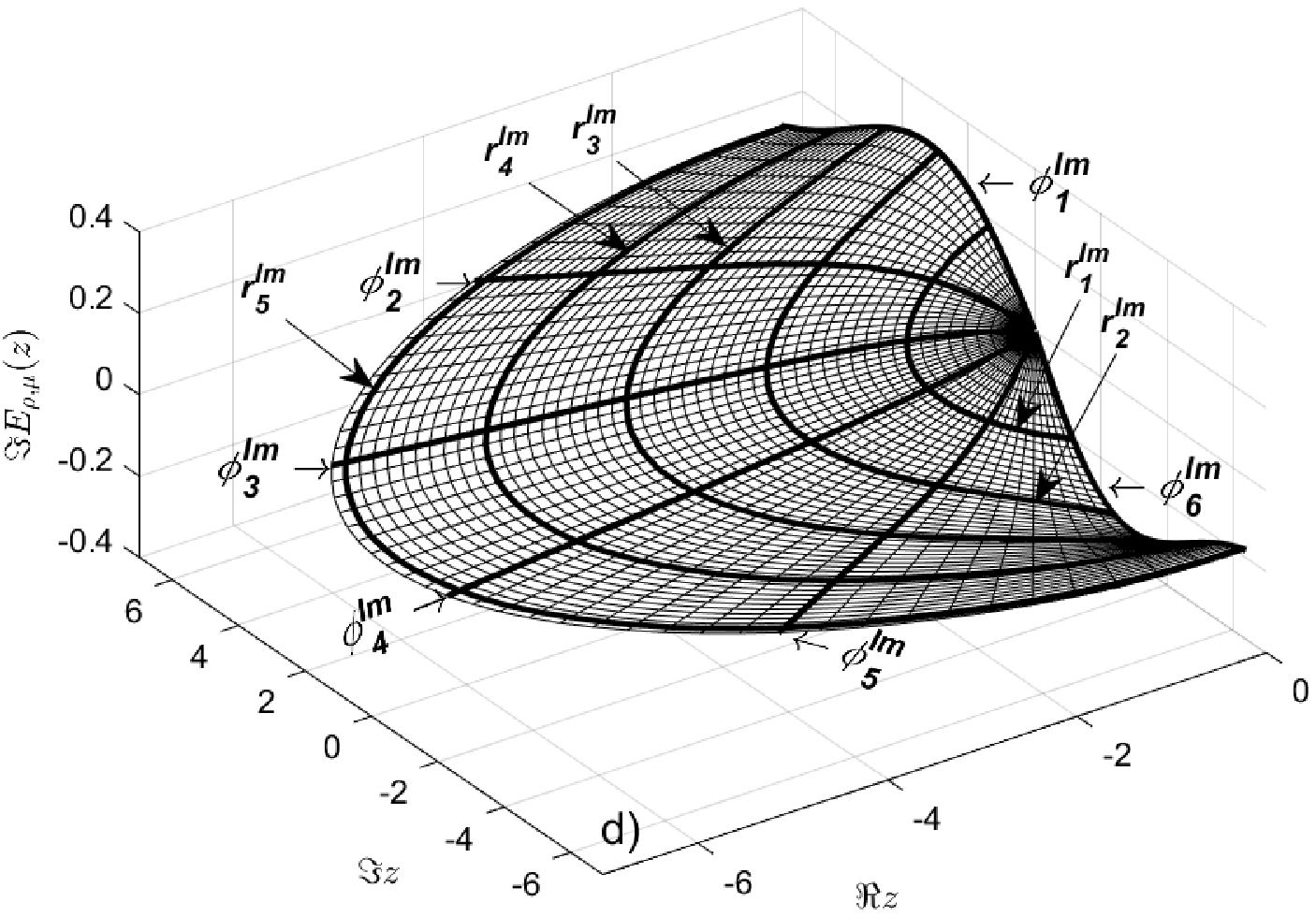}\\[4mm]
  \includegraphics[width=0.43\textwidth]{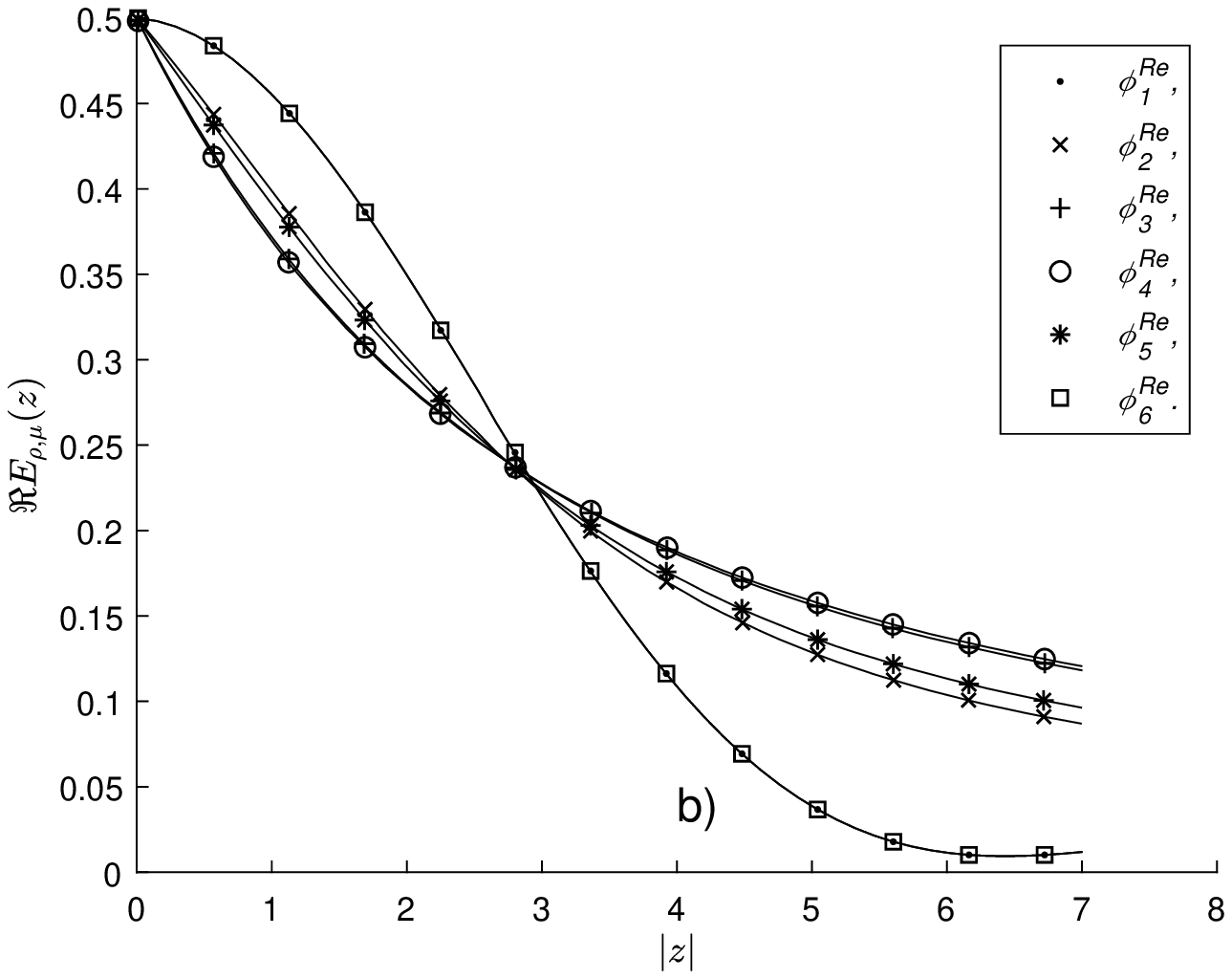}\hfill
  \includegraphics[width=0.43\textwidth]{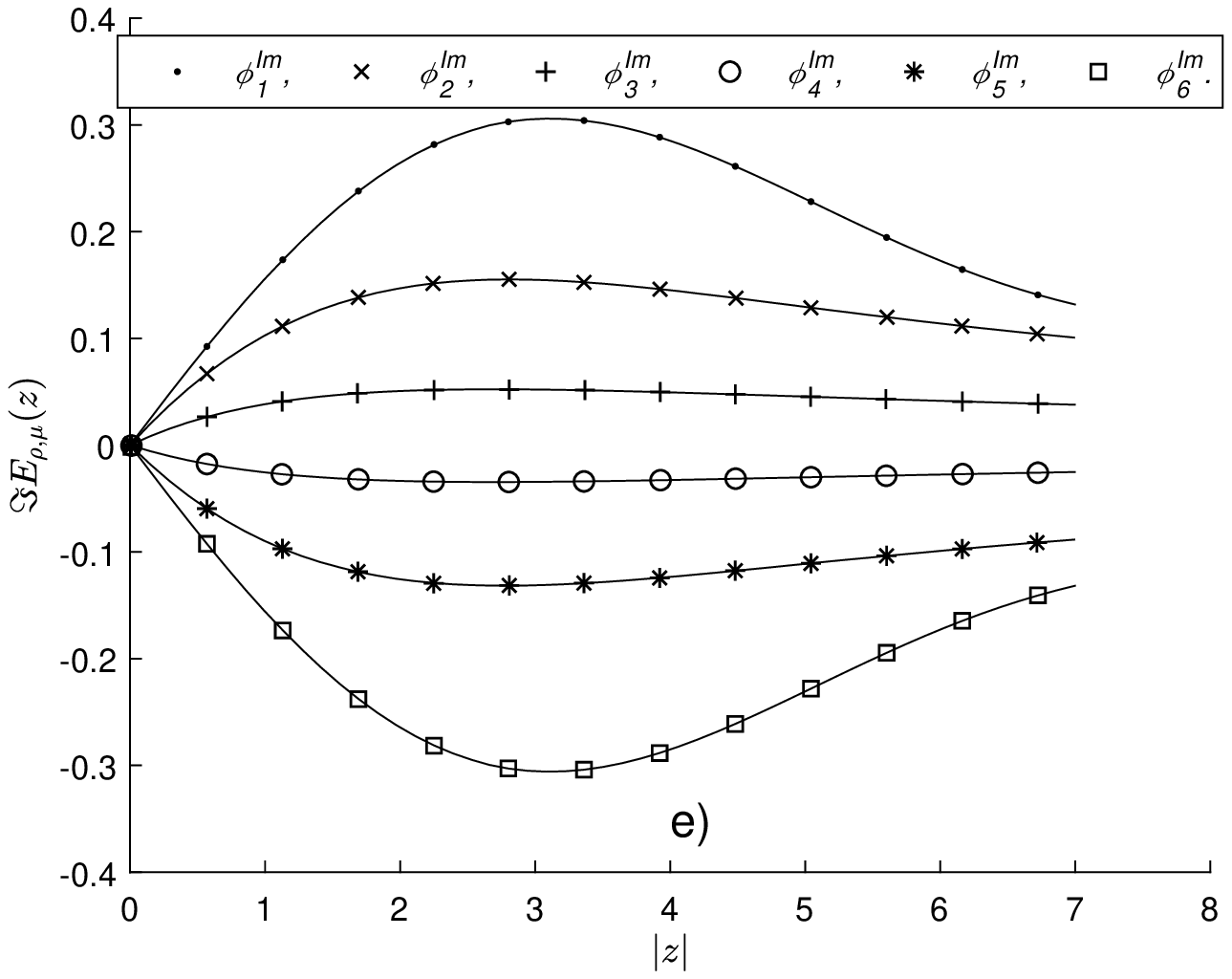}\\[4mm]
  \includegraphics[width=0.43\textwidth]{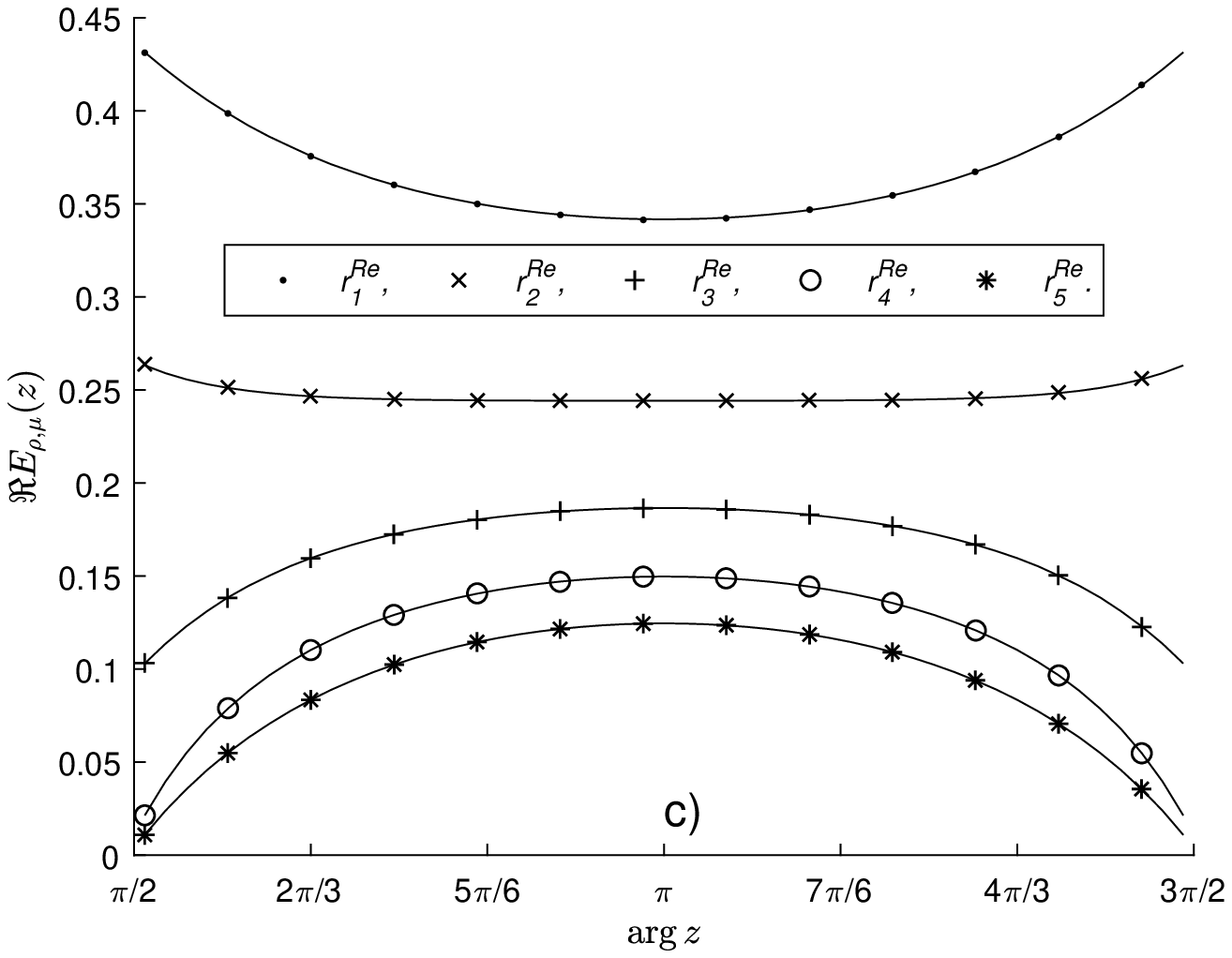}\hfill
  \includegraphics[width=0.43\textwidth]{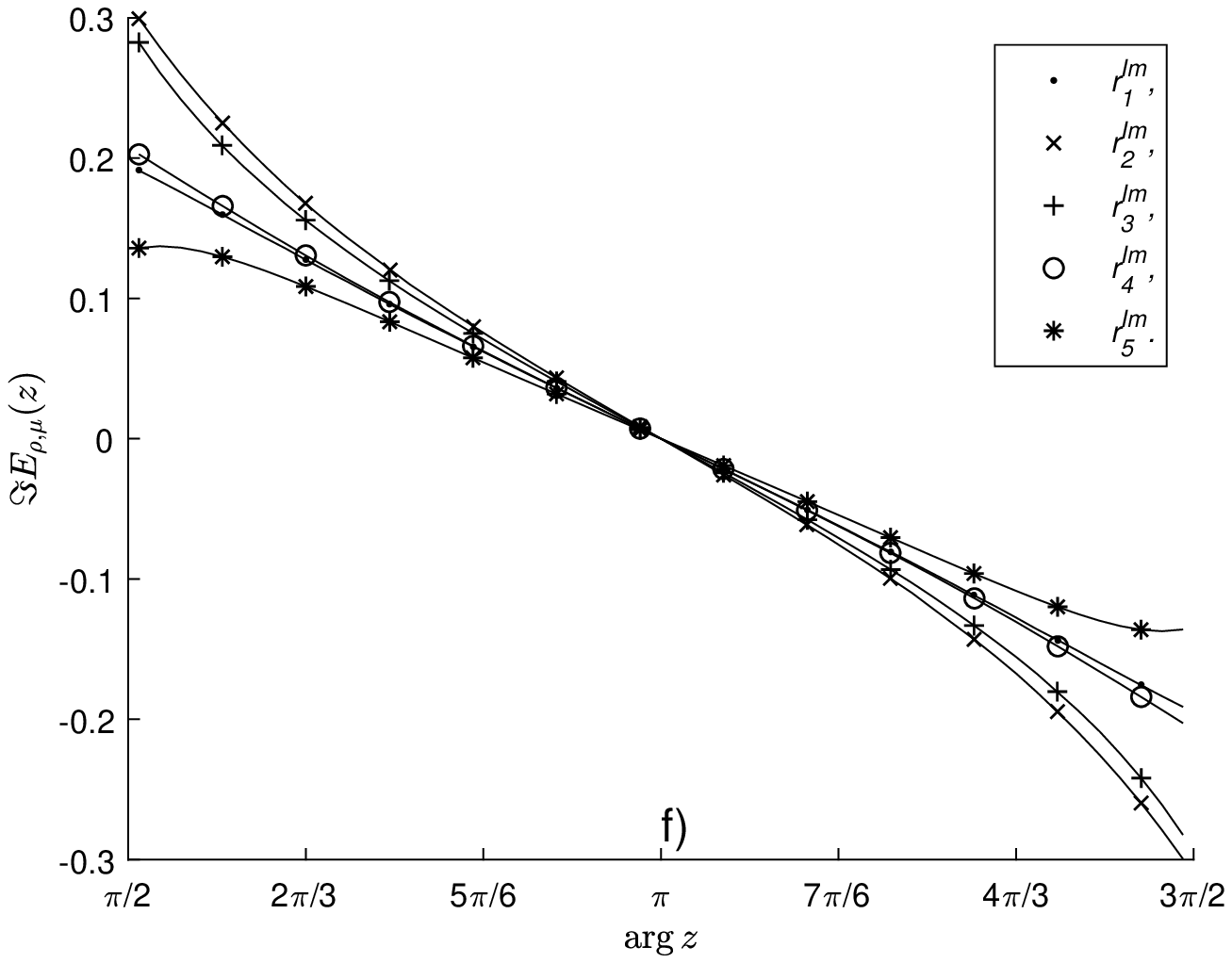}
  \caption{The function $E_{\rho,\mu}(z)$ for $\rho=1, \mu=3$ and $0.01\leqslant|z|\leqslant7, \pi/2<\arg z<3\pi/2$.  On the figures a) and d) the surfaces – the formula (\ref{eq:MLF_int2}), the curves – the formula (\ref{eq:MLF_mu>=2}). On the figures b), c), e) and f) the curves – the formula (\ref{eq:MLF_int2}), the points – the formula (\ref{eq:MLF_mu>=2}) }\label{fig:MLF_int2_rho1_mu3}
\end{figure}

In Fig.~\ref{fig:MLF_int2_rho1_mu0}~a) the real part of the representation (\ref{eq:MLF_int2}) is constructed and in Fig.~\ref{fig:MLF_int2_rho1_mu0}~d) the imaginary part of the representation (\ref{eq:MLF_int2}) for the values of the parameters $\rho=1,\mu=0$ and complex $z$ that changes in the limits $0.01\leqslant |z|\leqslant 7$, $\pi/2<\arg z<3\pi/2$. It is important to pay attention that in Theorem~\ref{lemm:MLF_int2} the argument $z$ was represented in the form $z=|z|e^{i\arg z}$, in this connection, in calculations it is convenient to use the polar coordinate system. This explains the concentric form of the calculation area. To verify the correctness of the representation (\ref{eq:MLF_int2}) and the correctness of the numerical calculation in Fig.~\ref{fig:MLF_int2_rho1_mu0}~a) and in Fig.~\ref{fig:MLF_int2_rho1_mu0}~d) the results of the calculation of the function $E_{1,0}(z)$ are given by the formula  (\ref{eq:MLF_mu<=1}). The values $\Re E_{1,0}(z)$ and $\Im E_{1,0}(z)$, calculated by the formula (\ref{eq:MLF_mu<=1}) are given in Fig.~1~a) and in Fig.~1~d) in thick curves. The fixed values $\arg z$ correspond to the thick curves $\phi_i^{Re},\ i=1,\dots,6$ in Fig.~\ref{fig:MLF_int2_rho1_mu0}~a) and the curves $\phi_i^{Im},\ i=1,\dots,6$ in Fig.~\ref{fig:MLF_int2_rho1_mu0}~d). The fixed values $|z|$ correspond to the thick curves $r_i^{Re},\ i=1,\dots,5$ in Fig.~\ref{fig:MLF_int2_rho1_mu0}~a) and the curves $r_i^{Im},\ i=1,\dots,6$ in Fig.~\ref{fig:MLF_int2_rho1_mu0}~d). As one can see from the figures, these curves lie exactly on the surface of the function $E_{1,0}(z)$, calculated by the formula (\ref{eq:MLF_int2}).  To verify the coincidence of the calculation results by the formulas (\ref{eq:MLF_int2}) and (\ref{eq:MLF_mu<=1}) in Fig.~\ref{fig:MLF_int2_rho1_mu0}~b) and Fig.~\ref{fig:MLF_int2_rho1_mu0}~e) the plots of $\Re E_{1,0}(z)$ and $\Im E_{1,0}(z)$ against $|z|$ are given at the fixed value $\arg z$. The values $\arg z$, for which these plots were constructed,  correspond to the curves $\phi_i^{Re},\ i=1,\dots,6$ in  Fig.~\ref{fig:MLF_int2_rho1_mu0}~a) and $\phi_i^{Im},\ i=1,\dots,6$  in Fig.~\ref{fig:MLF_int2_rho1_mu0}~d). In these figures, the solid curves correspond to the representation (\ref{eq:MLF_int2}), the points correspond to the formula~(\ref{eq:MLF_mu<=1}). As one can see, the calculation results by these two formulas coincide absolutely. In Fig.~\ref{fig:MLF_int2_rho1_mu0}~c) and Fig.~\ref{fig:MLF_int2_rho1_mu0}~f) the plots of $\Re E_{1,0}(z)$ and $\Im E_{1,0}(z)$ against $\arg z$ are given with the fixed value $|z|$. The values  $|z|$, for which these plots were constructed, correspond to the values $|z|$ the curves $r_i^{Re}$, $i=1,\dots,5$ in Fig.~\ref{fig:MLF_int2_rho1_mu0}~a) and $r_i^{Im}$, $i=1,\dots,5$, in Fig.~\ref{fig:MLF_int2_rho1_mu0}~d). As in the previous case, the solid curves are the calculation results by the formula  (\ref{eq:MLF_int2}), the points – the calculation results by the formula (\ref{eq:MLF_mu<=1}). As one can see from the graphs, the calculation results by these two formulas match absolutely which means that the representation  (\ref{eq:MLF_int2}) is correct.

In Fig.~\ref{fig:MLF_int2_rho1_mu3}~a) the real part of the representation  (\ref{eq:MLF_int2}) is constructed,  and in Fig.~\ref{fig:MLF_int2_rho1_mu3}~d) the imaginary part of the representation (\ref{eq:MLF_int2}) for values of the parameters $\rho=1,\mu=3$ and complex $z$ that changes in the limits  $0.01\leqslant |z|\leqslant 7$, $\pi/2<\arg z<3\pi/2$. The notation in Fig.~\ref{fig:MLF_int2_rho1_mu3} is similar to the notation in Fig.~\ref{fig:MLF_int2_rho1_mu0} with one exception that for verifying the calculation results the formula (\ref{eq:MLF_mu>=2}) was used. As we can see, in this case the calculation results with the help of the representation   (\ref{eq:MLF_int2}) coincide absolutely with the formula (\ref{eq:MLF_mu>=2}), which confirms the correctness of calculations made and validity of the integral representation for the function $E_{\rho,\mu}(z)$ formulated in Theorem~\ref{lemm:MLF_int2}.

We consider Parametrizations 2 and 3 for the representation “A”. In the work \cite{Saenko2020d} it was shown that in these two cases the form of the kernel  $K_{\rho,\mu}(r,-\delta_{1\rho},\delta_{2\rho},z)$ become simple (see corollary 1 in \cite{Saenko2020d}).  The similar situation takes place for the representation (\ref{eq:MLF_int2}). We formulate the result in the form of a corollary.

\begin{corollary}\label{coroll:MLF_int2_delatRho}
For any real $\rho>1/2$ and $\epsilon>0$, any complex $\mu=\mu_R+i\mu_I$ and any complex $z=te^{i\theta}$ satisfying the conditions $\frac{\pi}{2\rho}-\delta_\rho+\pi<\theta<-\frac{\pi}{2\rho}+\delta_\rho+\pi$,

1) at any real $\delta_\rho$ satisfying the condition
\begin{equation}\label{eq:deltaRho_cond_corol_int2_deltaRho}
\pi/(2\rho)<\delta_\rho\leqslant\min\left(\pi,\pi/\rho\right)
\end{equation}
the Mittag-Leffler function can be represented in the form:
\begin{multline}\label{eq:MLF_int2_deltaRho}
  E_{\rho,\mu}(z)=\int_{1+\epsilon}^{\infty} K_{\rho,\mu}^{Re}(r,\delta_\rho,t,\theta)dr+
  \int_{-\delta_\rho-\pi}^{\delta_\rho-\pi} P_{\rho,\mu}^{Re}(1+\epsilon,\varphi,t,\theta)d\varphi+\\
  +i\left\{\int_{1+\epsilon}^{\infty} K_{\rho,\mu}^{Im}(r,\delta_\rho,t,\theta)dr+
  \int_{-\delta_\rho-\pi}^{\delta_\rho-\pi} P_{\rho,\mu}^{Im}(1+\epsilon,\varphi,t,\theta)d\varphi\right\},
\end{multline}
where
\begin{multline}\label{eq:K_Re_deltaRho_corol_int2}
K_{\rho,\mu}^{Re}(r,\delta_\rho,t,\theta)=\frac{\rho}{2\pi}\frac{(tr)^{\rho(1-\mu_R)}}{(r^2+2r\cos\delta_\rho+1)}\\
\times [f(r,\delta_\rho-\pi,t,\theta)\left(r\sin(\xi(r,\delta_\rho-\pi,t,\theta))+ \sin(\xi(r,\delta_\rho-\pi,t,\theta)+\delta_\rho)\right)\\
-f(r,-\delta_\rho-\pi,t,\theta)\left.\left(r\sin(\xi(r,-\delta_\rho-\pi,t,\theta))+ \sin(\xi(r,-\delta_\rho-\pi,t,\theta)-\delta_\rho)\right)\right],
\end{multline}
\begin{multline}\label{eq:K_Im_deltaRho_corol_int2}
K_{\rho,\mu}^{Im}(r,\delta_\rho,t,\theta)=\frac{\rho}{2\pi}\frac{(tr)^{\rho(1-\mu_R)}}{(r^2+2r\cos\delta_\rho+1)}\\
\times [f(r,-\delta_\rho-\pi,t,\theta)\left(r\cos(\xi(r,-\delta_\rho-\pi,t,\theta))+ \cos(\xi(r,-\delta_\rho-\pi,t,\theta)-\delta_\rho)\right)\\
-f(r,\delta_\rho-\pi,t,\theta)\left.\left(r\cos(\xi(r,\delta_\rho-\pi,t,\theta))+ \cos(\xi(r,\delta_\rho-\pi,t,\theta)+\delta_\rho)\right)\right],
\end{multline}
$P_{\rho,\mu}^{Re}(r,\varphi,t,\theta)$ is defined by the expression  (\ref{eq:P_Re_lemm_int2}), $P_{\rho,\mu}^{Im}(r,\varphi,t,\theta)$ is defined by the expression (\ref{eq:P_Im_lemm_int2}), and the functions $f(r,\varphi,t,\theta)$ and $\xi(r,\varphi,t,\theta)$ are defined by the expressions  (\ref{eq:f_int2}) and (\ref{eq:xi_int2}), respectively.

2)  At $\rho\geqslant1$ and $\delta_\rho=\pi/\rho$, the Mittag-Leffler function can be represented in the form:
\begin{multline}\label{eq:MLF_int2_piRho}
  E_{\rho,\mu}(z)=\int_{1+\epsilon}^{\infty} K_{\rho,\mu}^{Re}(r,t,\theta)dr+
  \int_{-\tfrac{\pi}{\rho}-\pi}^{\tfrac{\pi}{\rho}-\pi} P_{\rho,\mu}^{Re}(1+\epsilon,\varphi,t,\theta)d\varphi+\\
  +i\left\{\int_{1+\epsilon}^{\infty} K_{\rho,\mu}^{Im}(r,t,\theta)dr+
  \int_{-\tfrac{\pi}{\rho}-\pi}^{\tfrac{\pi}{\rho}-\pi} P_{\rho,\mu}^{Im}(1+\epsilon,\varphi,t,\theta)d\varphi\right\},
\end{multline}
where
\begin{multline}\label{eq:K_Re_piRho_corol_int2}
  K_{\rho,\mu}^{Re}(r,t,\theta)  =\frac{\rho}{2\pi}\frac{(tr)^{\rho(1-\mu_R)} e^{\varrho(r,t,\theta)}}{(r^2+2r\cos(\pi/\rho)+1)}\\
  \times \left[e^{\mu_I\pi}(r\sin(\xi'(r,t,\theta)+(1-\mu_R)\pi)+\sin(\xi'(r,t,\theta)+(1-\mu_R)\pi+\pi/\rho))\right.\\
  -\left.  e^{-\mu_I\pi}(r\sin(\xi'(r,t,\theta)-(1-\mu_R)\pi)+\sin(\xi'(r,t,\theta)-(1-\mu_R)\pi-\pi/\rho))\right],
\end{multline}
\begin{multline}\label{eq:K_Im_piRho_corol_int2}
  K_{\rho,\mu}^{Im}(r,t,\theta)  =\frac{\rho}{2\pi}\frac{(tr)^{\rho(1-\mu_R)} e^{\varrho(r,t,\theta)}}{(r^2+2r\cos(\pi/\rho)+1)}\\
  \times \left[e^{-\mu_I\pi}(r\cos(\xi'(r,t,\theta)-(1-\mu_R)\pi)+\cos(\xi'(r,t,\theta)-(1-\mu_R)\pi-\pi/\rho))\right. \\
  -\left.  e^{\mu_I\pi}(r\cos(\xi'(r,t,\theta)+(1-\mu_R)\pi)+\cos(\xi'(r,t,\theta)+(1-\mu_R)\pi+\pi/\rho))\right],
\end{multline}
$P_{\rho,\mu}^{Re}(r,\varphi,t,\theta)$ is defined by the expression (\ref{eq:P_Re_lemm_int2}), $P_{\rho,\mu}^{Im}(r,\varphi,t,\theta)$ is defined by the expression (\ref{eq:P_Im_lemm_int2}), and the functions $\varrho(r,t,\theta)$ and $\xi'(r,t,\theta)$ have the form
\begin{align}
  \varrho(r,t,\theta) & =-(tr)^\rho\cos(\rho(\theta-\pi))+\rho\mu_I(\theta-\pi),\label{eq:varrho_int2} \\
  \xi'(r,t,\theta) & =-(tr)^\rho\sin(\rho(\theta-\pi))+\rho(1-\mu_R)(\theta-\pi)-\rho\mu_I\ln(tr)\label{eq:xi'_int2}.
\end{align}
\end{corollary}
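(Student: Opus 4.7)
The plan is to deduce both parts of the corollary as specializations of Theorem~\ref{lemm:MLF_int2}. Namely, for part~1 I would set $\delta_{1\rho}=\delta_{2\rho}=\delta_\rho$ (Parametrization~2), and for part~2 I would further take $\delta_\rho=\pi/\rho$ (Parametrization~3). No fresh contour manipulation is required; the entire argument reduces to symbolic simplification of the kernels (\ref{eq:K_Re_lemm_int2}), (\ref{eq:K_Im_lemm_int2}) together with the auxiliary functions (\ref{eq:f_int2}), (\ref{eq:xi_int2}).

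For part~1, I would substitute $\varphi_1=-\delta_\rho$ and $\varphi_2=\delta_\rho$ into (\ref{eq:K_Re_lemm_int2}), (\ref{eq:K_Im_lemm_int2}). Since $\cos(-\delta_\rho)=\cos\delta_\rho$, the two factors $r^2+2r\cos\varphi_1+1$ and $r^2+2r\cos\varphi_2+1$ coincide and cancel against the matching factors appearing inside the brackets in each kernel, leaving the single common denominator $r^2+2r\cos\delta_\rho+1$ displayed in (\ref{eq:K_Re_deltaRho_corol_int2}) and (\ref{eq:K_Im_deltaRho_corol_int2}). The two surviving summands keep the structure $f\cdot(r\sin\xi+\sin(\xi\pm\delta_\rho))$ and $f\cdot(r\cos\xi+\cos(\xi\pm\delta_\rho))$ with the appropriate signs, the integration range for the boundary integrals becomes $[-\delta_\rho-\pi,\delta_\rho-\pi]$, and the admissibility window (\ref{eq:argZ_cond_lemm_int2}) collapses to the symmetric inequality stated in the corollary. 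Substituting back into (\ref{eq:MLF_int2}) then yields (\ref{eq:MLF_int2_deltaRho}).

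For part~2, the choice $\delta_\rho=\pi/\rho$ is admissible under (\ref{eq:deltaRho_cond_corol_int2_deltaRho}) precisely when $\rho\geqslant1$, since then $\min(\pi,\pi/\rho)=\pi/\rho$ (and $\pi/\rho>\pi/(2\rho)$ automatically). The key identity is $\rho(\theta\pm\pi/\rho-\pi)=\rho(\theta-\pi)\pm\pi$, which immediately gives $\cos(\rho(\theta\pm\pi/\rho-\pi))=-\cos(\rho(\theta-\pi))$ and $\sin(\rho(\theta\pm\pi/\rho-\pi))=-\sin(\rho(\theta-\pi))$. Feeding these into (\ref{eq:f_int2}), (\ref{eq:xi_int2}) and comparing with (\ref{eq:varrho_int2}), (\ref{eq:xi'_int2}), a short calculation produces the relations $f(r,\pm\pi/\rho-\pi,t,\theta)=e^{\varrho(r,t,\theta)}e^{\pm\mu_I\pi}$ and $\xi(r,\pm\pi/\rho-\pi,t,\theta)=\xi'(r,t,\theta)\pm(1-\mu_R)\pi$. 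Inserting these into the part~1 kernels (\ref{eq:K_Re_deltaRho_corol_int2}), (\ref{eq:K_Im_deltaRho_corol_int2}) with $\delta_\rho=\pi/\rho$ and factoring the common $e^{\varrho(r,t,\theta)}$ out of the bracket reproduces exactly (\ref{eq:K_Re_piRho_corol_int2}) and (\ref{eq:K_Im_piRho_corol_int2}), and hence (\ref{eq:MLF_int2_piRho}).

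The computation is wholly mechanical, but the one spot demanding care is the sign bookkeeping in part~2: one must verify that the upper sign in $\pm\pi/\rho$ consistently pairs the $+(1-\mu_R)\pi$ shift inside $\xi$ with the prefactor $e^{+\mu_I\pi}$ in front of $f$, and likewise for the lower sign, so that the four resulting sine (respectively cosine) terms land in the correct slots of (\ref{eq:K_Re_piRho_corol_int2}) (respectively (\ref{eq:K_Im_piRho_corol_int2})). Beyond that bookkeeping, no analytic input beyond Theorem~\ref{lemm:MLF_int2} and elementary trigonometric identities is needed.
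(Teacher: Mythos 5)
Your proposal is correct and follows essentially the same route as the paper: part~1 by setting $\delta_{1\rho}=\delta_{2\rho}=\delta_\rho$ in Theorem~\ref{lemm:MLF_int2} and cancelling the now-coincident quadratic factors in the kernels, and part~2 by substituting $\delta_\rho=\pi/\rho$ and reducing $f$ and $\xi$ via the shift $\rho(\theta\pm\pi/\rho-\pi)=\rho(\theta-\pi)\pm\pi$ (the paper phrases this as $\cos(\pi+\varphi)=-\cos\varphi$, $\sin(\pi\pm\varphi)=\mp\sin\varphi$, which is the same computation). The sign bookkeeping you flag is exactly the relations $f(r,\pm\pi/\rho-\pi,t,\theta)=e^{\varrho(r,t,\theta)\pm\mu_I\pi}$ and $\xi(r,\pm\pi/\rho-\pi,t,\theta)=\xi'(r,t,\theta)\pm(1-\mu_R)\pi$ the paper records, so nothing is missing.
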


\begin{proof}
1) According to Theorem~\ref{lemm:MLF_int2} the Mittag-Leffler function can be represented in the form~(\ref{eq:MLF_int2}). This representation is true for any real  $\delta_{1\rho}$ and $\delta_{2\rho}$ satisfying the conditions (\ref{eq:deltaRho_cond_lemm_int2}). In the case, if $\delta_{1\rho}=\delta_{2\rho}=\delta_\rho$ these two conditions are transformed into one condition imposed on the value $\delta_\rho$
\begin{equation*}
  \pi/(2\rho)<\delta_\rho\leqslant\min\left(\pi,\pi/\rho\right).
\end{equation*}
We represent the complex number $z$ in the form $z=te^{i\theta}$. As a result, in view of the foregoing, the condition (\ref{eq:argZ_cond_lemm_int2}) is written in the form $\pi/(2\rho)-\delta_{\rho}+\pi<\theta<-\pi/(2\rho)+\delta_{\rho}+\pi$. Substituting in (\ref{eq:MLF_int2}) the value  $\delta_\rho$, instead of $\delta_{1\rho}$ and $\delta_{2\rho}$ we obtain
\begin{multline}\label{eq:MLF_int2_corol_deltaRho_tmp}
  E_{\rho,\mu}(z)=\int_{1+\epsilon}^{\infty}K_{\rho,\mu}^{Re}(r,-\delta_{\rho},\delta_{\rho},t,\theta)dr
  + \int_{-\delta_{\rho}-\pi}^{\delta_{\rho}-\pi}P_{\rho,\mu}^{Re}(1+\epsilon,\varphi,t,\theta)d\varphi\\
  +i\left\{ \int_{1+\epsilon}^{\infty}K_{\rho,\mu}^{Im}(r,-\delta_{\rho},\delta_{\rho},t,\theta)dr
  + \int_{-\delta_{\rho}-\pi}^{\delta_{\rho}-\pi}P_{\rho,\mu}^{Im}(1+\epsilon,\varphi,t,\theta)d\varphi\right\}.
\end{multline}

It is clear from this expression that in the summands, c $P_{\rho,\mu}^{Re}(r,\varphi,t,\theta)$ and  $P_{\rho,\mu}^{Im}(r,\varphi,t,\theta)$ only the limits of integration have changed but the integrand expressions do not change.  However, in the summands  c $K_{\rho,\mu}^{Re}(r,-\delta_\rho,\delta_\rho,t,\theta)$ and $K_{\rho,\mu}^{Im}(r,-\delta_\rho,\delta_\rho,t,\theta)$ with this substitution the integrand functions will change. We consider how these functions are transformed. For brevity, we will introduce the notations
\begin{equation}\label{eq:KRe_deltaRho_KIm_deltaRho}
K_{\rho,\mu}^{Re}(r,\delta_\rho,t,\theta)\equiv K_{\rho,\mu}^{Re}(r,-\delta_\rho,\delta_\rho,t,\theta),\quad K_{\rho,\mu}^{Im}(r,\delta_\rho,t,\theta)\equiv K_{\rho,\mu}^{Im}(r,-\delta_\rho,\delta_\rho,t,\theta).
\end{equation}
Substituting in (\ref{eq:K_Re_lemm_int2}) $\delta_{1\rho}=\delta_{2\rho}=\delta_\rho$ we get
\begin{multline*}
  K_{\rho,\mu}^{Re}(r,-\delta_\rho,\delta_\rho,t,\theta)\equiv K_{\rho,\mu}^{Re}(r,\delta_\rho,t,\theta)
    = \frac{\rho}{2\pi}\frac{(tr)^{\rho(1-\mu_R)}} {(r^2+2r\cos\delta_\rho+1)}\\
  \times [f(r,\delta_\rho-\pi,t,\theta) \left(r\sin(\xi(r,\delta_\rho-\pi,t,\theta))+ \sin(\xi(r,\delta_\rho-\pi,t,\theta)+\delta_\rho)\right)\nonumber\\
-f(r,-\delta_\rho-\pi,t,\theta)\left.\left(r\sin(\xi(r,-\delta_\rho-\pi,t,\theta))+ \sin(\xi(r,-\delta_\rho-\pi,t,\theta)-\delta_\rho)\right)\right].
\end{multline*}

Similarly, after substitution $\delta_{1\rho}=\delta_{2\rho}=\delta_\rho$ in (\ref{eq:K_Im_lemm_int2}) we get the expression for  $K_{\rho,\mu}^{Im}(r,\delta_\rho,t,\theta)$:
\begin{multline*}
 K_{\rho,\mu}^{Im}(r,-\delta_\rho,\delta_\rho,t,\theta)\equiv K_{\rho,\mu}^{Im}(r,\delta_\rho,t,\theta)= \frac{\rho}{2\pi}\frac{(tr)^{\rho(1-\mu_R)}}{(r^2+2r\cos\delta_\rho+1)}\\
\times[f(r,-\delta_\rho-\pi,t,\theta)\left(r\cos(\xi(r,-\delta_\rho-\pi,t,\theta))+ \cos(\xi(r,-\delta_\rho-\pi,t,\theta)-\delta_\rho)\right)\\
-f(r,\delta_\rho-\pi,t,\theta)\left(r\cos(\xi(r,\delta_\rho-\pi,t,\theta))+ \cos(\xi(r,\delta_\rho-\pi,t,\theta)+\delta_\rho)\right)].
\end{multline*}
Using now $K_{\rho,\mu}^{Re}(r,\delta_\rho,t,\theta)$ and $K_{\rho,\mu}^{Im}(r,\delta_\rho,t,\theta)$ in (\ref{eq:MLF_int2_corol_deltaRho_tmp}) we come to the representation  (\ref{eq:MLF_int2_deltaRho}). Since no additional constraints were introduced for the values $\rho$ and $\mu$ in the proof, then the conditions for the values of these parameters pass from Theorem~\ref{lemm:MLF_int2} without change.  The first part of the corollary is proved.

2) We consider now the case $\delta_\rho=\pi/\rho$.  As one can see from the condition (\ref{eq:deltaRho_cond_corol_int2_deltaRho}) this case can be implemented only at values $\rho\geqslant1$.  Substituting now the value $\delta_\rho=\pi/\rho$ in (\ref{eq:MLF_int2_deltaRho}) we get
\begin{multline}\label{eq:MLF_int2_piRho_tmp}
  E_{\rho,\mu}(z)=\int_{1+\epsilon}^{\infty} K_{\rho,\mu}^{Re}(r,\pi/\rho,t,\theta)dr+
  \int_{-\tfrac{\pi}{\rho}-\pi}^{\tfrac{\pi}{\rho}-\pi} P_{\rho,\mu}^{Re}(1+\epsilon,\varphi,t,\theta)d\varphi+\\
  +i\left\{\int_{1+\epsilon}^{\infty} K_{\rho,\mu}^{Im}(r,\pi/\rho,t,\theta)dr+
  \int_{-\tfrac{\pi}{\rho}-\pi}^{\tfrac{\pi}{\rho}-\pi} P_{\rho,\mu}^{Im}(1+\epsilon,\varphi,t,\theta)d\varphi\right\}.
\end{multline}
In the same way as we did in the first part of the proof of this corollary, we  consider how the kernels $K_{\rho,\mu}^{Re}(r,\pi/\rho,t,\theta)$ and $K_{\rho,\mu}^{Im}(r,\pi/\rho,t,\theta)$ will change at $\delta_\rho=\pi/\rho$. Taking into consideration that $\cos(\pi+\varphi)=-\cos\varphi$ for the function  $f(r,\varphi,t,\theta)$ which is defined by the expression (\ref{eq:f_int2}), we have
\begin{multline}\label{eq:f1_corol_int2}
  f\left(r,\tfrac{\pi}{\rho}-\pi,t,\theta\right)=\exp\left\{(tr)^\rho \cos\left(\rho\left(\theta+\tfrac{\pi}{\rho}-\pi\right)\right)+ \rho\mu_I\left(\theta+\tfrac{\pi}{\rho}-\pi\right)\right\}\\
  =e^{\varrho(r,t,\theta)+\mu_I\pi},
\end{multline}
where $\varrho(r,t,\theta)$ has the form (\ref{eq:varrho_int2}).
Similarly, we obtain
\begin{equation}\label{eq:f2_corol_int2}
  f\left(r,-\tfrac{\pi}{\rho}-\pi,t,\theta\right)=e^{\varrho(r,t,\theta)-\mu_I\pi}.
\end{equation}
Taking into account that $\sin(\pi\pm\varphi)=\mp\sin\varphi$ for the function $\xi(r,\varphi,t,\theta)$ defined by the expressison (\ref{eq:xi_int2}) we obtain
\begin{equation}\label{eq:xi1_corol_int2}
  \xi\left(r,\tfrac{\pi}{\rho}-\pi,t,\theta\right) =\xi'(r,t,\theta)+(1-\mu_R)\pi,
\end{equation}
where $\xi'(r,t,\theta)$ has the form (\ref{eq:xi'_int2}). In the same way we obtain
\begin{equation}\label{eq:xi2_corol_int2}
  \xi\left(r,-\tfrac{\pi}{\rho}-\pi,t,\theta\right)=\xi'(r,t,\theta)-(1-\mu_R)\pi.
\end{equation}

Now using the expressions (\ref{eq:f1_corol_int2})~-~(\ref{eq:xi2_corol_int2}) in (\ref{eq:K_Re_deltaRho_corol_int2}) and introducing the notation
\begin{equation}\label{eq:KRe_piRho}
K_{\rho,\mu}^{Re}(r,t,\theta)\equiv K_{\rho,\mu}^{Re}(r,\pi/\rho,t,\theta),
\end{equation}
we get
\begin{multline} \label{eq:K_Re_piRho_corol_int2_tmp}
  K_{\rho,\mu}^{Re}(r,t,\theta)
  =\frac{\rho}{2\pi}\frac{(tr)^{\rho(1-\mu_R)}e^{\varrho(r,t,\theta)}}{(r^2+2r\cos(\pi/\rho)+1)}\\
  \times \left[e^{\mu_I\pi}\left(r\sin(\xi'(r,t,\theta)+(1-\mu_R)\pi)+\sin\left(\xi'(r,t,\theta)+ (1-\mu_R)\pi+\tfrac{\pi}{\rho}\right)\right)\right.\\
  -\left.e^{-\mu_I\pi}\left(r\sin(\xi'(r,t,\theta)-(1-\mu_R)\pi)+ \sin\left(\xi'(r,t,\theta)-(1-\mu_R)\pi-\tfrac{\pi}{\rho}\right)\right)\right].
\end{multline}
Similarly, substituting the expressions (\ref{eq:f1_corol_int2})~-~(\ref{eq:xi2_corol_int2}) in (\ref{eq:K_Re_deltaRho_corol_int2}) and by introducing the notation
\begin{equation}\label{eq:KIm_piRho}
K_{\rho,\mu}^{Im}(r,t,\theta)\equiv K_{\rho,\mu}^{Im}(r,\pi/\rho,t,\theta),
\end{equation}
we get
\begin{multline}\label{eq:K_Im_piRho_corol_int2_tmp}
  K_{\rho,\mu}^{Im}(r,t,\theta)=\frac{\rho}{2\pi}\frac{(tr)^{\rho(1-\mu_R)}e^{\varrho(r,t,\theta)}}{(r^2+2r\cos(\pi/\rho)+1)}\\
   \times\left[e^{-\mu_I\pi}\left(r\cos(\xi'(r,t,\theta)-(1-\mu_R)\pi)+ \cos\left(\xi'(r,t,\theta)-(1-\mu_R)\pi-\tfrac{\pi}{\rho}\right)\right)\right.\\
  -\left.e^{\mu_I\pi}\left(r\cos(\xi'(r,t,\theta)+(1-\mu_R)\pi)+\cos\left(\xi'(r,t,\theta)+(1-\mu_R)\pi+\tfrac{\pi}{\rho}\right)\right)\right].
\end{multline}
It should be pointed out that the integrands $P_{\rho,\mu}^{Re}(r,\varphi,t,\theta)$ and $P_{\rho,\mu}^{Re}(r,\varphi,t,\theta)$ do not change and are defined by the expressions (\ref{eq:P_Re_lemm_int2}) and (\ref{eq:P_Im_lemm_int2}) respectively. Now substituting (\ref{eq:K_Re_piRho_corol_int2_tmp}), (\ref{eq:K_Im_piRho_corol_int2_tmp}), (\ref{eq:P_Re_lemm_int2}) and (\ref{eq:P_Im_lemm_int2}) in (\ref{eq:MLF_int2_piRho_tmp}) we arrive at the representation (\ref{eq:MLF_int2_piRho}).
\begin{flushright}
  $\Box$
\end{flushright}
\end{proof}

The proved corollary shows that in the case when the parameters $\delta_{1\rho}$ and $\delta_{2\rho}$ are equal to each other (Parameterization~2)  the functions $K_{\rho,\mu}^{Re}(r,\varphi,t,\theta)$ and  $K_{\rho,\mu}^{Im}(r,\varphi,t,\theta)$ become simple. These functions take a simpler form if $\delta_\rho=\pi/\rho$ (parameterization~3). However, in this case the parameter $\rho$ can take only values $\rho\geqslant1$. It should be noted that the representation (\ref{eq:MLF_int2_piRho}) is also true for the value $\rho=1$ because in this case the contour of integration $\gamma_\zeta$  in the integral representation of the function $E_{\rho,\mu}(z)$ (see \cite{Saenko2020}) will not pass through the singular point $\zeta=1$.

In Fig.~\ref{fig:MLF_int2_deltaRho_rho1_mu2_AB} the calculation results of the function $E_{\rho,\mu}(z)$ are given for the value parameters $\rho=1$, $\mu=2$, $\delta_\rho=\pi$ with the use of the representation (\ref{eq:MLF_int2_deltaRho}), and in Fig.~\ref{fig:MLF_int2_piRho_rho1_mu4_AC} the calculation results are given for the parameters $\rho=1$, $\mu=4$ with the use of the representation (\ref{eq:MLF_int2_piRho}). In these two figures we can see the following: in Figure~a) the real part  $E_{\rho,\mu}(z)$ is given, and in Figure~d) the imaginary part of the function $E_{\rho,\mu}(z)$ calculated by the formulas (\ref{eq:MLF_int2_deltaRho}) and (\ref{eq:MLF_int2_piRho}) respectively. The argument $z$ of the function $E_{\rho,\mu}(z)$ changes within the limits  $0.01\leqslant |z|\leqslant 7$, $\pi/2<\arg z<3\pi/2$. To verify the validity of the calculations made in figures the calculation results of the function $E_{\rho,\mu}(z)$ are given according to the formula (\ref{eq:MLF_mu>=2}). The values $\Re E_{\rho,\mu}(z)$ and $\Im E_{\rho,\mu}(z)$, calculated according to the formula (\ref{eq:MLF_mu>=2}) are given in figures a) and d) in the thick curves. The fixed values of $\arg z$ correspond to the thick curves $\phi_i^{Re},\ i=1,\dots,6$ in Fig.~a) and the curves $\phi_i^{Im},\ i=1,\dots,6$ in Fig.~d). The fixed values of $|z|$ correspond to the thick curves  $r_i^{Re},\ i=1,\dots,5$ in Fig.~a) and the curves $r_i^{Im},\ i=1,\dots,6$ in Fig.~d). As one can see from these figures, these curves lie exactly on the surface of the function $E_{\rho,\mu}(z)$ calculated by the formulas (\ref{eq:MLF_int2_deltaRho}) and (\ref{eq:MLF_int2_piRho}). To be convinced that the calculation results by the  formulas (\ref{eq:MLF_int2_deltaRho}) and (\ref{eq:MLF_int2_piRho}) coincide absolutely with the calculation results by the formula (\ref{eq:MLF_mu>=2}) in Fig.~b) and Fig.~f) the plots of $\Re E_{\rho,\mu}(z)$ and $\Im E_{\rho,\mu}(z)$ against $|z|$ are given at the fixed value of $\arg z$. The values of $\arg z$, for which these plots are constructed, correspond to the curves $\phi_i^{Re},\ i=1,\dots,6$ in Fig.~a) and $\phi_i^{Im},\ i=1,\dots,6$ in Fig.~d). In Fig.~\ref{fig:MLF_int2_deltaRho_rho1_mu2_AB}~b) and Fig.~\ref{fig:MLF_int2_deltaRho_rho1_mu2_AB}~e) solid curves correspond to the representation (\ref{eq:MLF_int2_deltaRho}), and in Fig.~\ref{fig:MLF_int2_piRho_rho1_mu4_AC}~b) and in Fig.~\ref{fig:MLF_int2_piRho_rho1_mu4_AC}~e) solid curves correspond to the representation (\ref{eq:MLF_int2_piRho}). The points in these figures correspond to the formula (\ref{eq:MLF_mu>=2}). As one can see, the results of calculations by these two  formulas coincide absolutely. In Fig.~c) and Fig.~f) the plots of  $\Re E_{\rho,\mu}(z)$ and $\Im E_{\rho,\mu}(z)$ against $\arg z$ are given at the fixed value of  $|z|$. The values of $|z|$, for which these plots were constructed, correspond to the values  of $|z|$ of the curves $r_i^{Re}$, $i=1,\dots,5$ in Fig.~a) and $r_i^{Im}$, $i=1,\dots,5$ in Fig.~d). As in the previous case, solid curves are the calculation results by the formulas (\ref{eq:MLF_int2_deltaRho}) in Fig.~\ref{fig:MLF_int2_deltaRho_rho1_mu2_AB} and by the formula (\ref{eq:MLF_int2_piRho}) in Fig.~\ref{fig:MLF_int2_piRho_rho1_mu4_AC}, the points are the calculation results by the formula (\ref{eq:MLF_mu>=2}). As we can see from these graphs, the calculation results according to these two formulas coincide absolutely, which confirms the correctness of the integral representation for the function $E_{\rho,\mu}(z)$ formulated in the corollary~\ref{coroll:MLF_int2_delatRho}.

\begin{figure}
  \centering
  \includegraphics[width=0.43\textwidth]{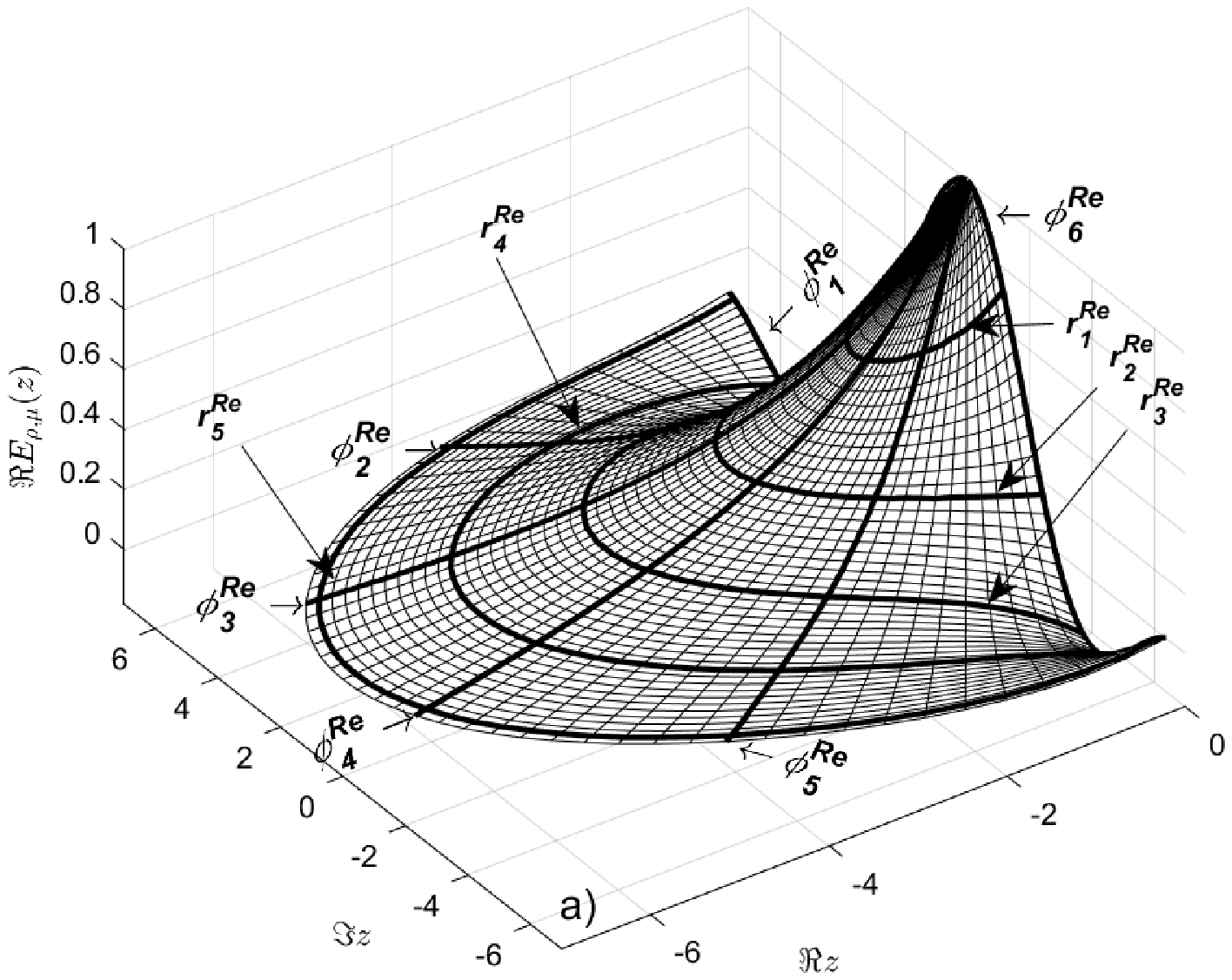}\hfill
  \includegraphics[width=0.43\textwidth]{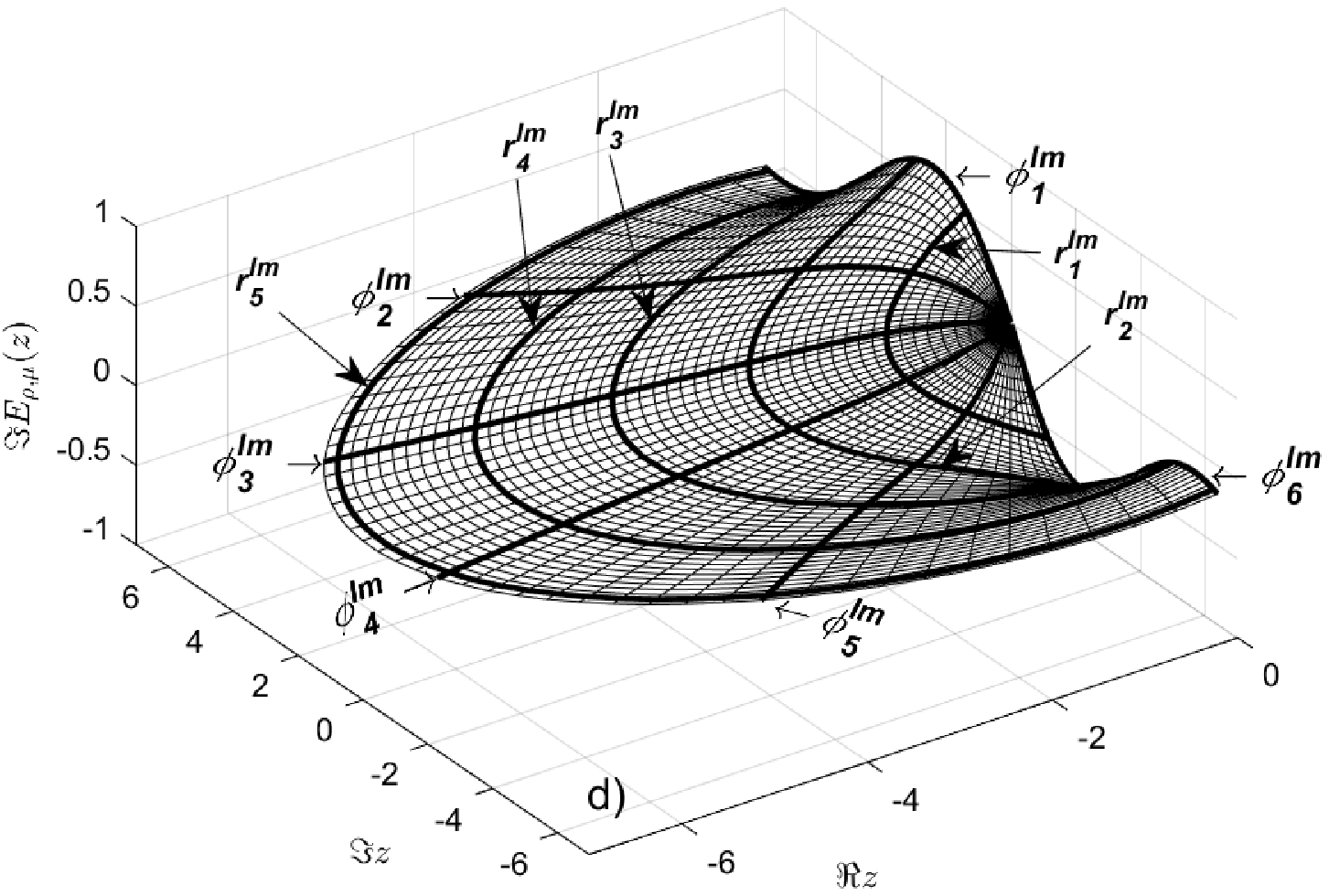}\\[4mm]
  \includegraphics[width=0.43\textwidth]{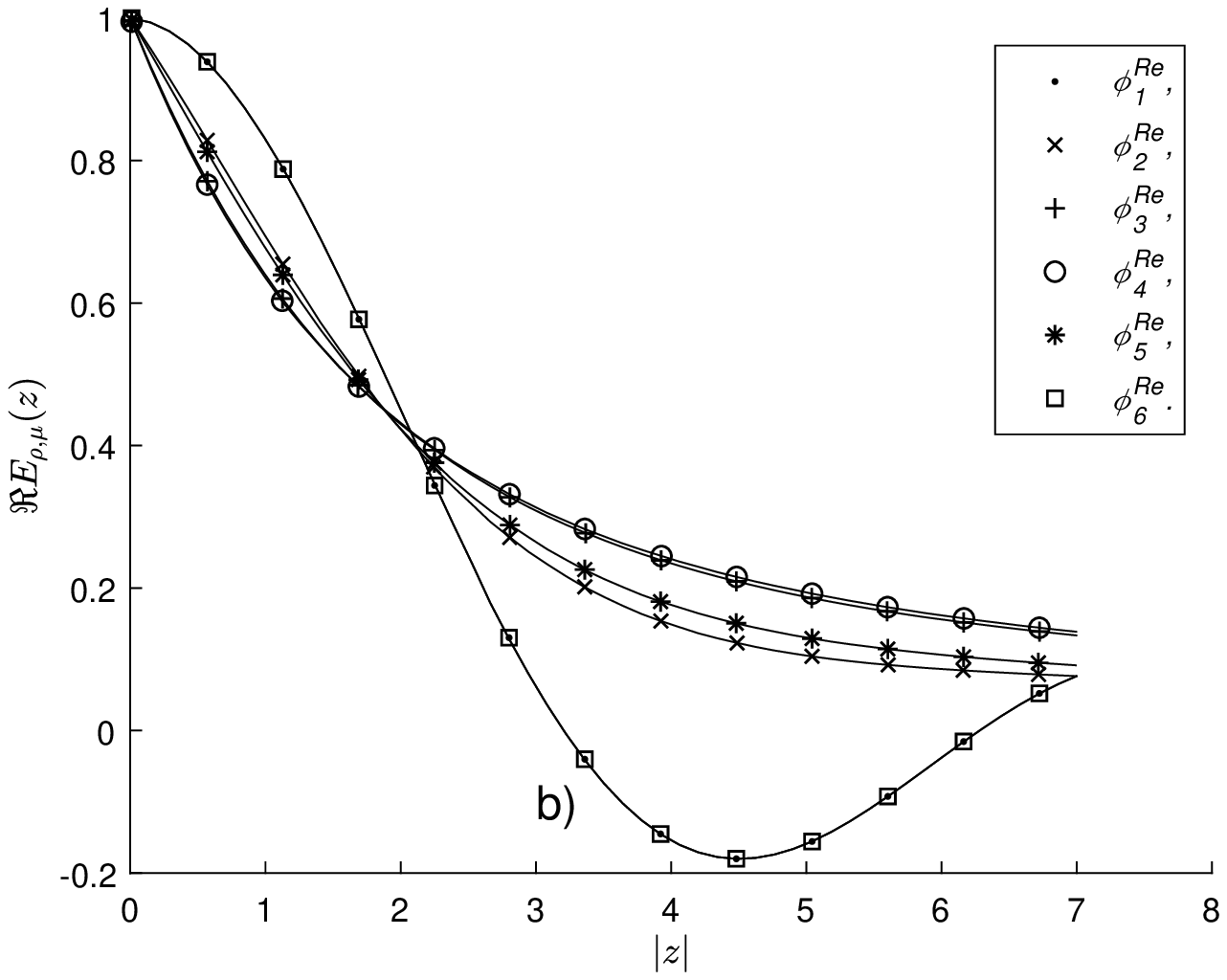}\hfill
  \includegraphics[width=0.43\textwidth]{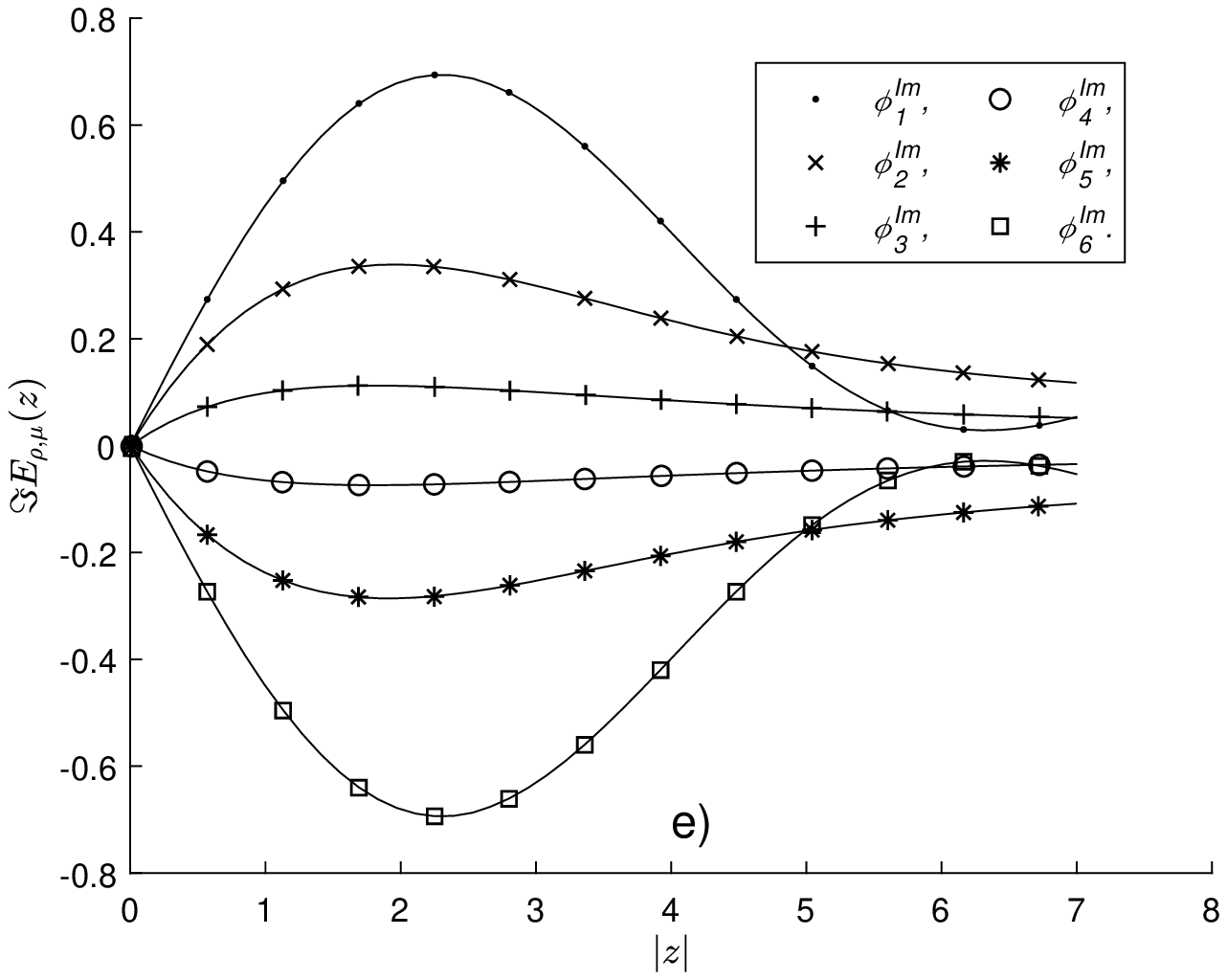}\\[4mm]
  \includegraphics[width=0.43\textwidth]{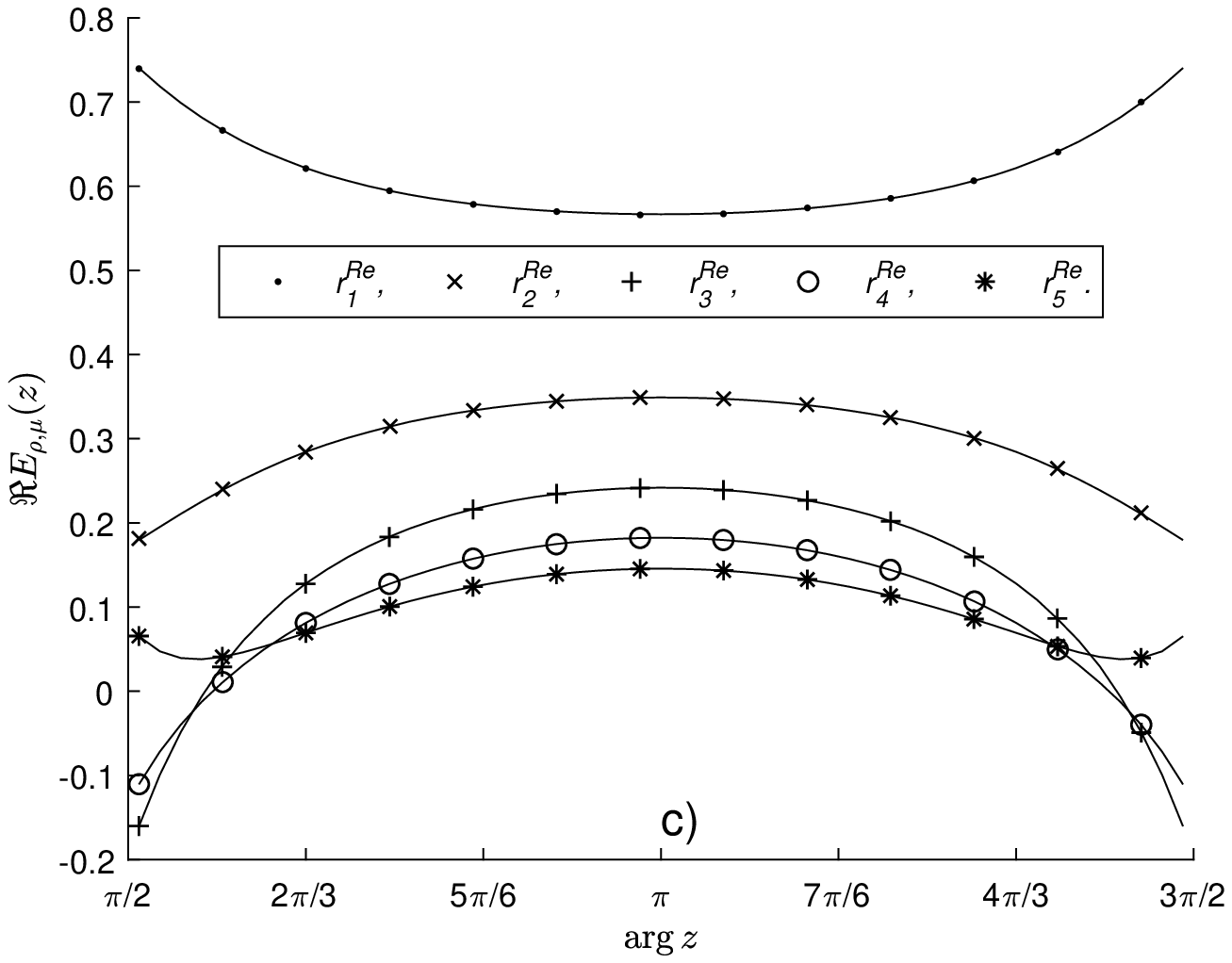}\hfill
  \includegraphics[width=0.43\textwidth]{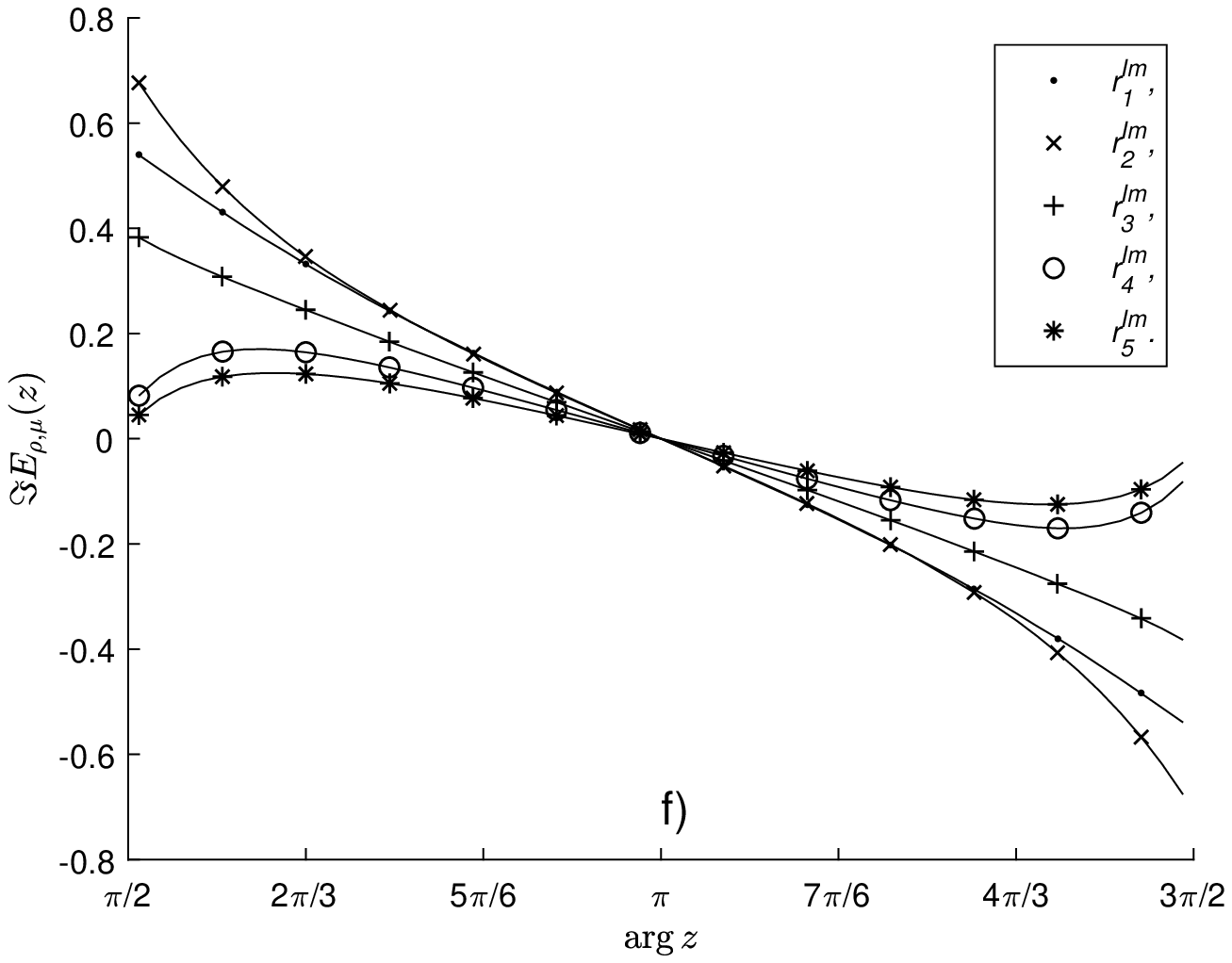}
  \caption{The function $E_{\rho,\mu}(z)$ for $\rho=1, \mu=2,\delta_\rho=\pi$ and $0.01\leqslant|z|\leqslant7, \pi/2<\arg z<3\pi/2$. On the figures a) and d) the surfaces - the formula (\ref{eq:MLF_int2_deltaRho}), the curves - the formula (\ref{eq:MLF_mu>=2}). On the figures b), c), e) and f) the curves - the formula (\ref{eq:MLF_int2_deltaRho}), the points - the formula (\ref{eq:MLF_mu>=2})
  }\label{fig:MLF_int2_deltaRho_rho1_mu2_AB}
\end{figure}

\begin{figure}
  \centering
  \includegraphics[width=0.43\textwidth]{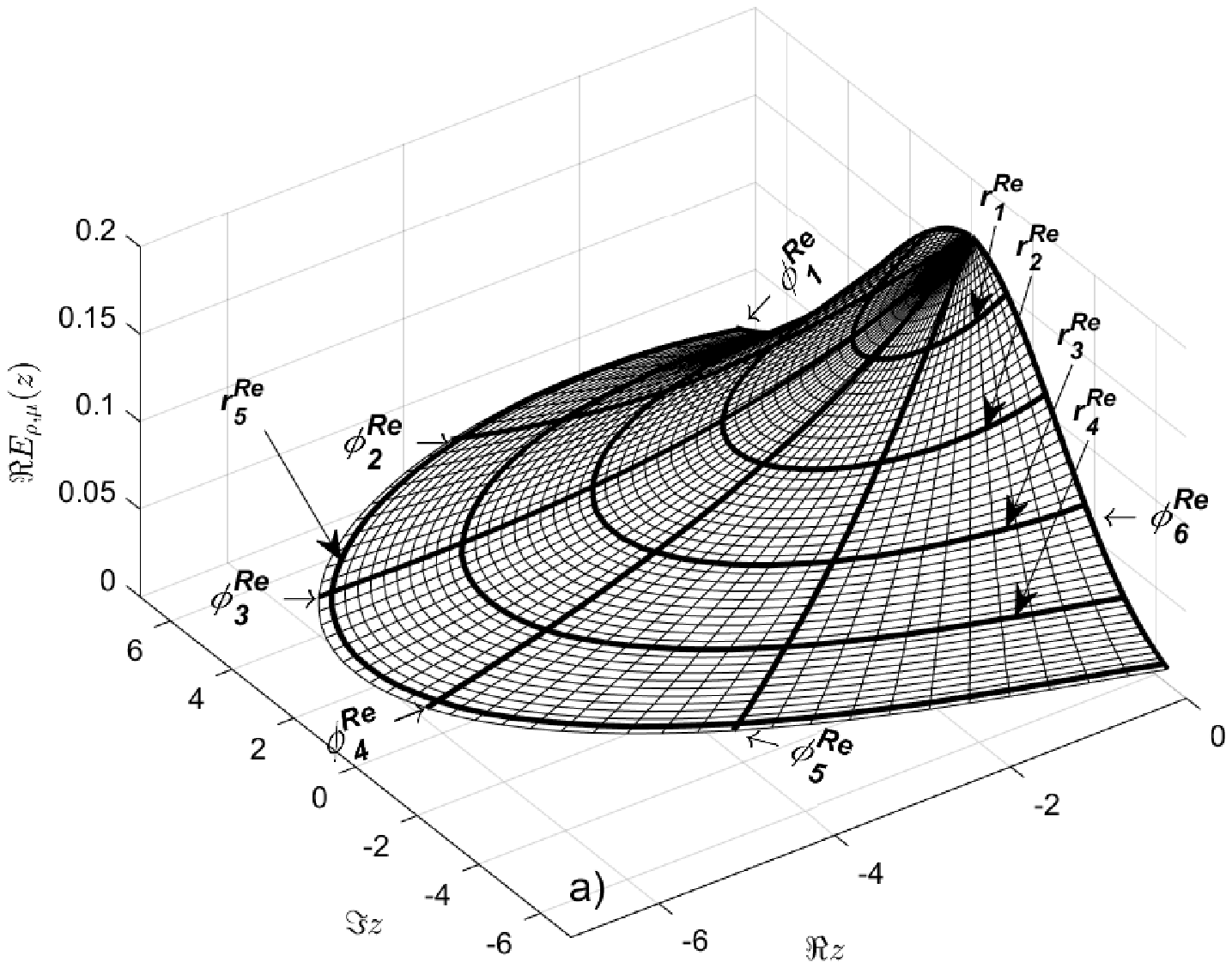}\hfill
  \includegraphics[width=0.43\textwidth]{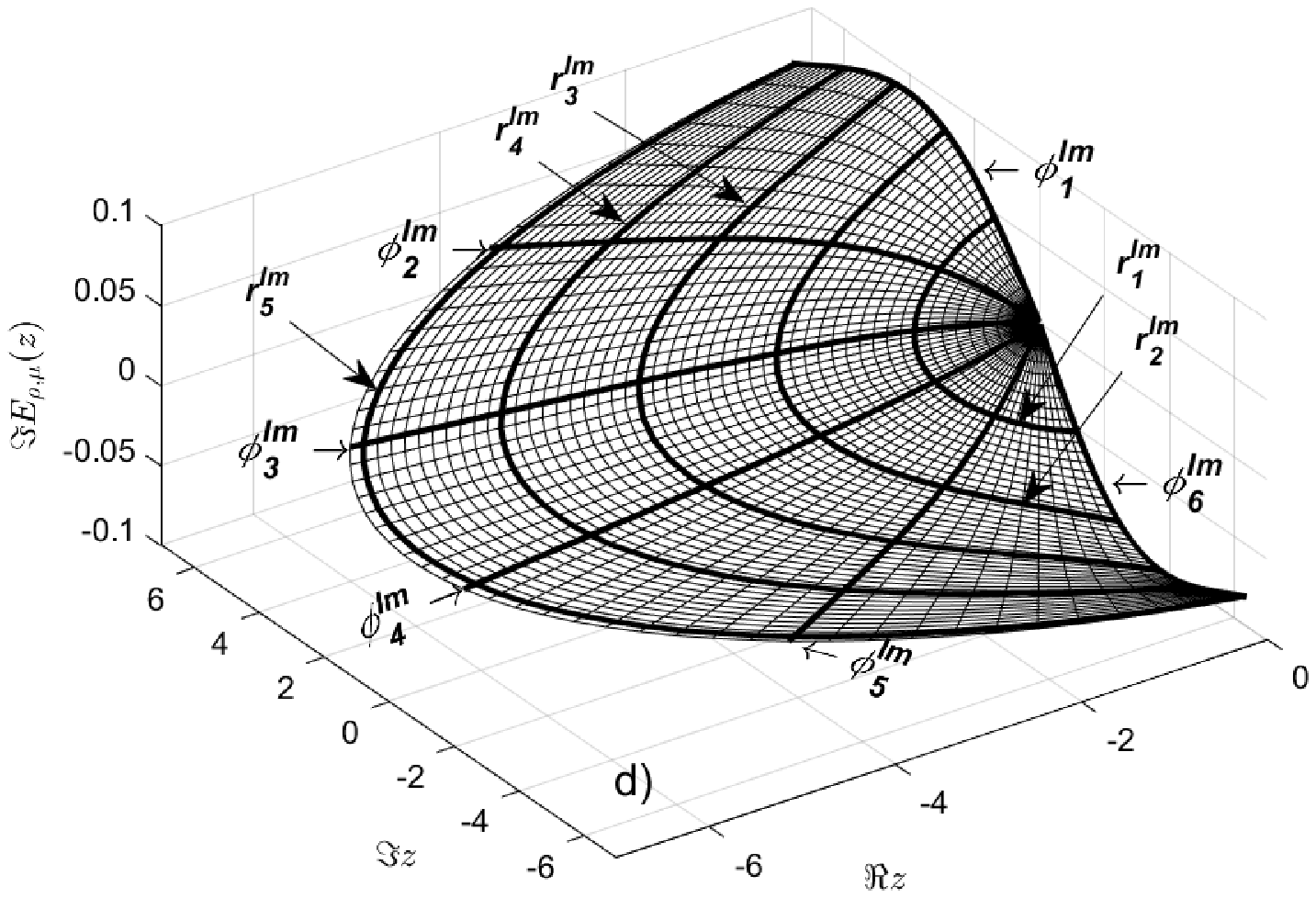}\\[4mm]
  \includegraphics[width=0.43\textwidth]{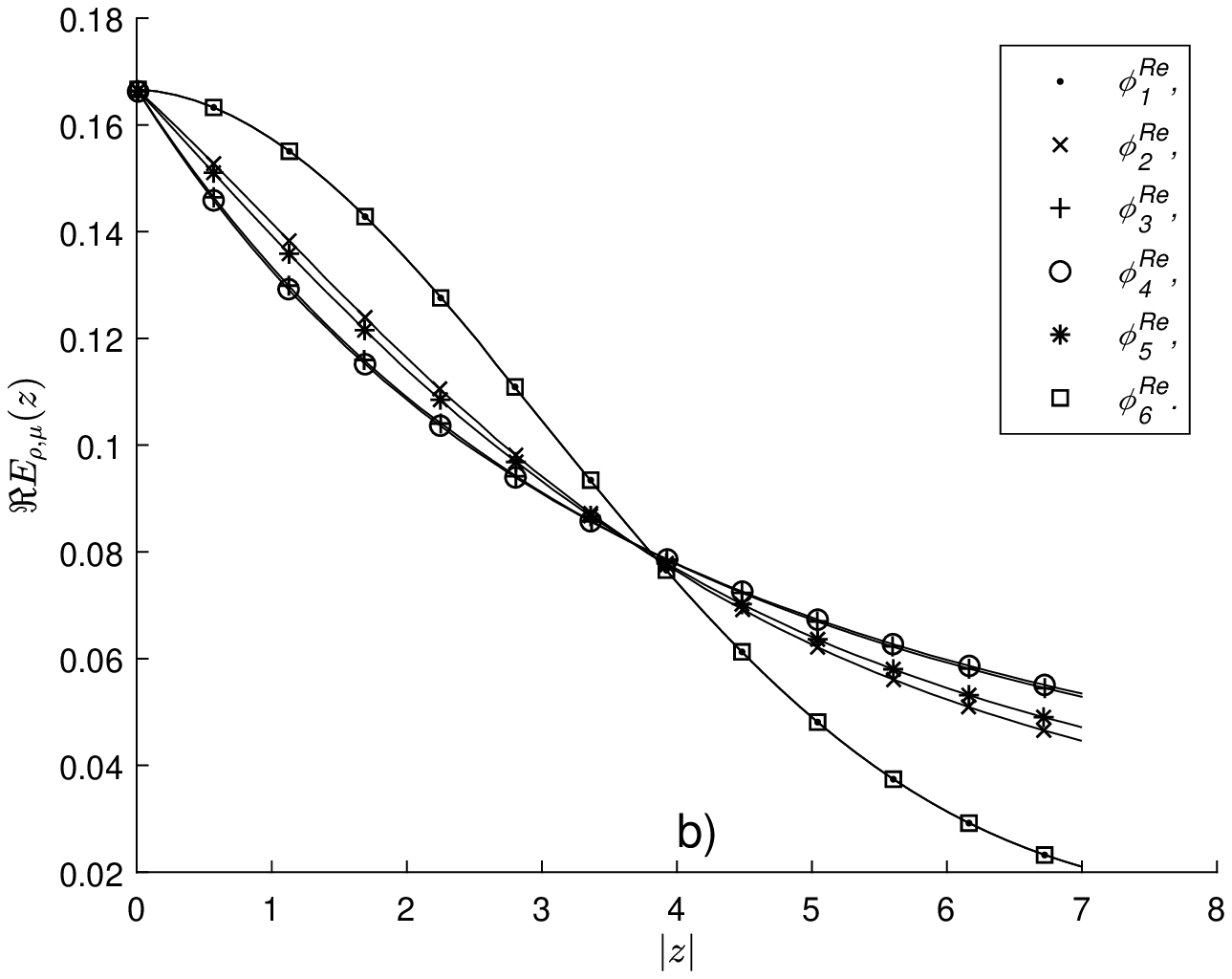}\hfill
  \includegraphics[width=0.43\textwidth]{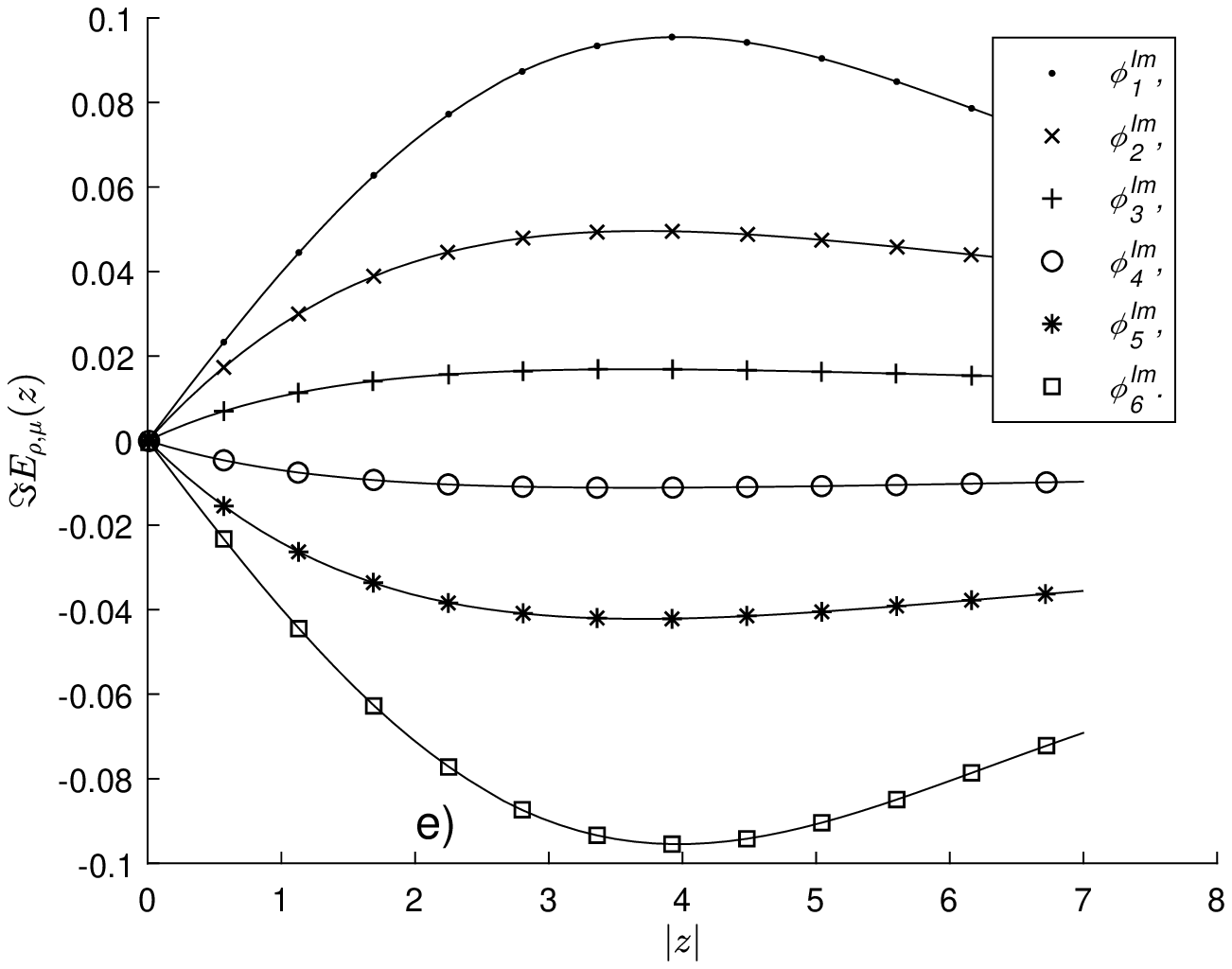}\\[4mm]
  \includegraphics[width=0.43\textwidth]{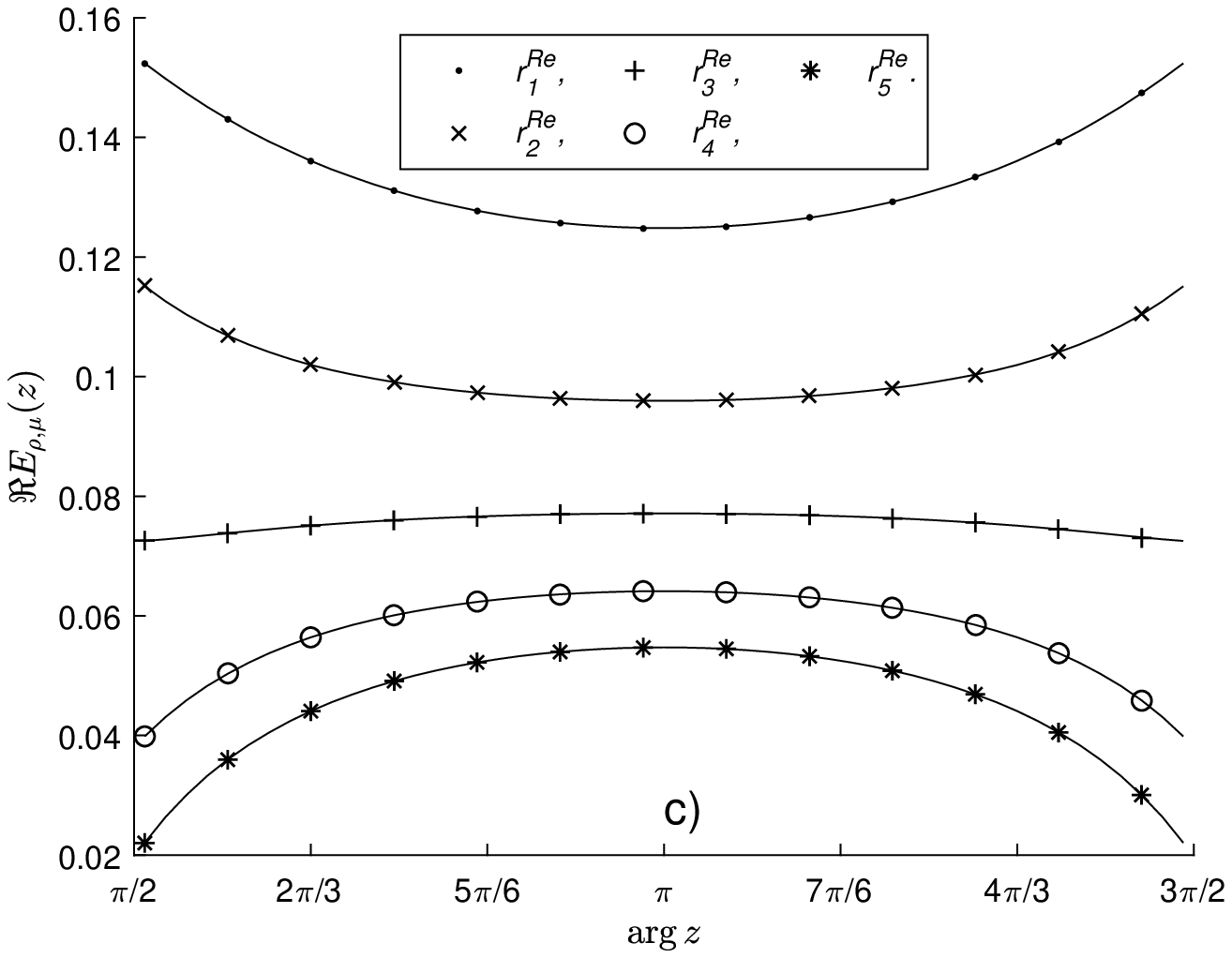}\hfill
  \includegraphics[width=0.43\textwidth]{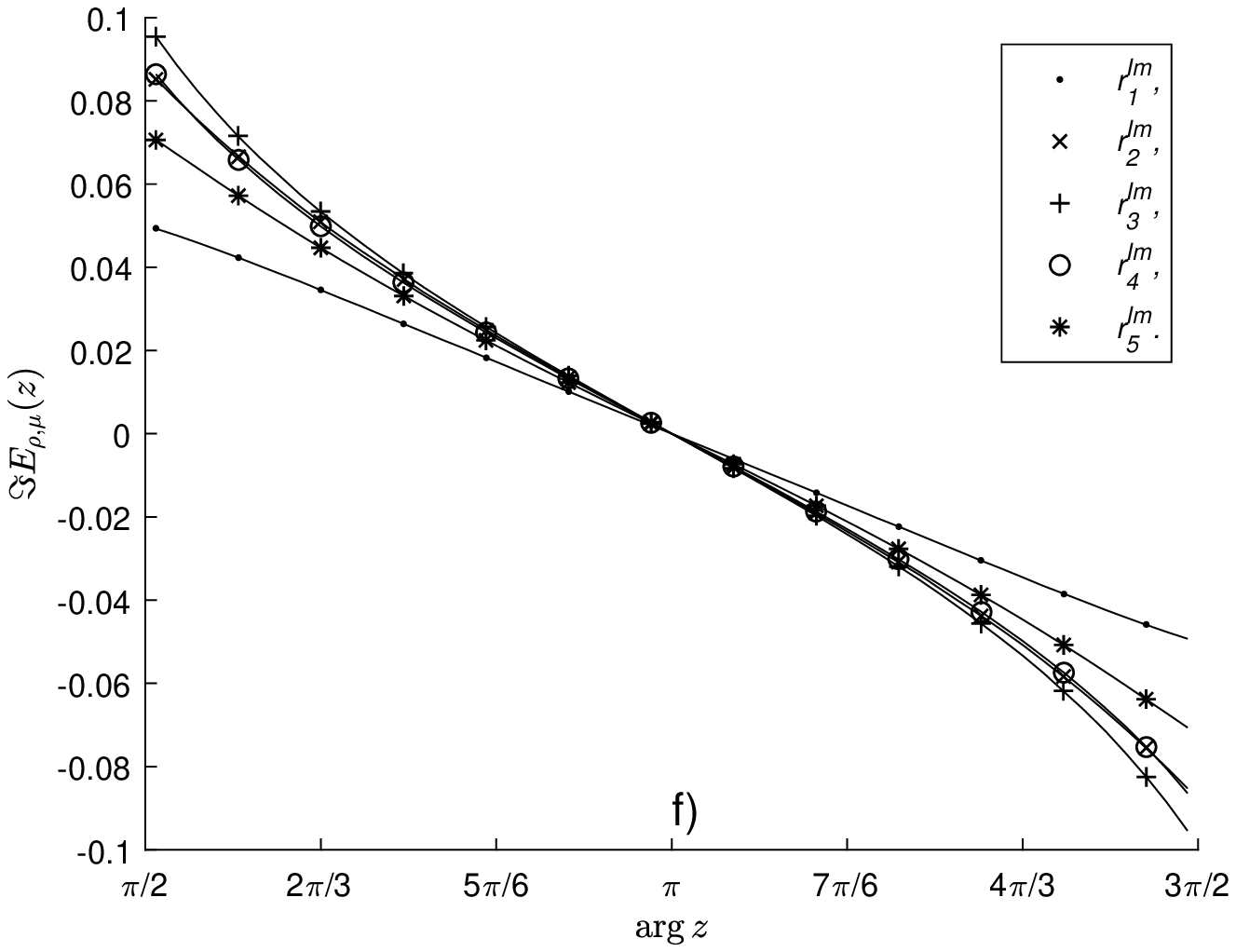}
  \caption{The function $E_{\rho,\mu}(z)$ for $\rho=1, \mu=4$ and $0.01\leqslant|z|\leqslant7, \pi/2<\arg z<3\pi/2$.  On the figures a) and d) the surfaces – the formula (\ref{eq:MLF_int2_piRho}), the curves – the formula (\ref{eq:MLF_mu>=2}). On the figures b), c), e) and f) the curves – the formula (\ref{eq:MLF_int2_piRho}), the points – the formula (\ref{eq:MLF_mu>=2})
  }\label{fig:MLF_int2_piRho_rho1_mu4_AC}
\end{figure}

Consider now the integral representation “B”. This integral representation  was obtained in the article \cite{Saenko2020d} (see Theorem~3 in \cite{Saenko2020d} for Parameterization~1 and Corollary~3 in \cite{Saenko2020d} for Parameterizations~2~and~3). As a result, for the integral representation “B” of the Mittag-Leffler function and Parameterizations~1,~2,~3 the following theorem turns out to be valid

\begin{theorem}\label{lemm:MLF_int3}
For any real $\rho>1/2$, any complex $\mu=\mu_R+i\mu_I$ satisfying the condition $\mu_R<1+\tfrac{1}{\rho}$
\begin{enumerate}
  \item at any real $\delta_{1\rho}$ and $\delta_{2\rho}$ satisfying the conditions
\begin{equation}\label{eq:deltaRho_cond_corol_int3}
\begin{array}{ll}
  \frac{\pi}{2\rho}<\delta_{1\rho}\leqslant\frac{\pi}{\rho},\quad \frac{\pi}{2\rho}<\delta_{2\rho}\leqslant\frac{\pi}{\rho}, &\quad \mbox{if}\quad\rho>1,  \\
  \frac{\pi}{2\rho}<\delta_{1\rho}<\pi,\quad \frac{\pi}{2\rho}<\delta_{2\rho}<\pi, &\quad \mbox{if}\quad 1/2<\rho\leqslant1
  \end{array}
\end{equation}
and any complex $z=te^{i\theta}$ satisfying the condition
\begin{equation}\label{eq:argZ_cond_corol_int3}
  \pi/(2\rho)-\delta_{2\rho}+\pi<\theta<-\pi/(2\rho)+\delta_{1\rho}+\pi
\end{equation}
the Mittag-Leffler function can be represented in the form
\begin{equation}\label{eq:MLF_int3}
E_{\rho,\mu}(z)=\int_{0}^{\infty}K_{\rho,\mu}^{Re}(r, -\delta_{1\rho},\delta_{2\rho},t,\theta) dr
+i \int_{0}^{\infty}K_{\rho,\mu}^{Im}(r,-\delta_{1\rho},\delta_{2\rho},t,\theta) dr,
\end{equation}
where $K_{\rho,\mu}^{Re}(r,\psi_1,\psi_2,t,\theta)$ and $K_{\rho,\mu}^{Im}(r,\psi_1,\psi_2,t,\theta)$ are defined by the expressions  (\ref{eq:K_Re_lemm_int2}) and (\ref{eq:K_Im_lemm_int2}) respectively.

\item If $1/2<\rho\leqslant1$, $\delta_{1\rho}=\pi$, $\pi/(2\rho)<\delta_{2\rho}<\pi$, then for any complex $z=t e^{i\theta}$, satisfying the condition $\frac{\pi}{2\rho}-\delta_{2\rho}+\pi<\theta<-\frac{\pi}{2\rho}+2\pi$,  the Mittag-Leffler function can be represented in the form
    \begin{multline}\label{eq:MLF_int3_case2}
      E_{\rho,\mu}(z)= \int_{0}^{\infty}K_{\rho,\mu}^{\prime Re}(r,\delta_{2\rho},t,\theta)dr
      -\int_{0}^{1-\varepsilon_1}K_{\rho,\mu}^{\prime Re}(r,-\pi,t,\theta)dr\\
      -\int_{1+\varepsilon_1}^{\infty}K_{\rho,\mu}^{\prime Re}(r,-\pi,t,\theta)dr
      + \int_{-2\pi}^{-\pi} P_{\rho,\mu}^{\prime Re}(\varepsilon_1,\psi,-2,t,\theta)d\psi\\
      + i\left\{\int_{0}^{\infty}K_{\rho,\mu}^{\prime Im}(r,\delta_{2\rho},t,\theta)dr -
      \int_{0}^{1-\varepsilon_1}K_{\rho,\mu}^{\prime Im}(r,-\pi,t,\theta)dr\right.\\
      -\left. \int_{1+\varepsilon_1}^{\infty}K_{\rho,\mu}^{\prime Im}(r,-\pi,t,\theta)dr
      + \int_{-2\pi}^{-\pi} P_{\rho,\mu}^{\prime Im}(\varepsilon_1,\psi,-2,t,\theta)d\psi\right\}.
    \end{multline}
Here $\varepsilon_1$ is an arbitrary real number satisfying the condition $0<\varepsilon_1<1$.
\begin{align}
  K_{\rho,\mu}^{\prime Re}(r,\delta,t,\theta) =& \frac{\rho}{2\pi} \frac{f(r,\delta-\pi,t,\theta) (rt)^{\rho(1-\mu_R) }}{r^2+2r\cos\delta+1}\nonumber\\
  \times&(r\sin(\xi(r,\delta-\pi,t,\theta))+\sin(\xi(r,\delta-\pi,t,\theta)+\delta)), \label{eq:MLF_int3_K'Re}\\
  K_{\rho,\mu}^{\prime Im}(r,\delta,t,\theta) =& -\frac{\rho}{2\pi} \frac{f(r,\delta-\pi,t,\theta) (rt)^{\rho(1-\mu_R) }}{r^2+2r\cos\delta+1}\nonumber\\
   \times&(r\cos(\xi(r,\delta-\pi,t,\theta))+\cos(\xi(r,\delta-\pi,t,\theta)+\delta)), \label{eq:MLF_int3_K'Im}
\end{align}
where the function $f(r,\varphi,t,\theta)$ has the form (\ref{eq:f_int2}), and the function $\xi(r,\varphi,t,\theta)$ has the form (\ref{eq:xi_int2}).

\begin{multline}\label{eq:MLF_int3_P'Re}
  P_{\rho,\mu}^{\prime Re}(\tau,\psi,k,t,\theta) = \frac{\rho\tau}{2\pi} \frac{(t r(\tau,\psi))^{\rho(1-\mu_R)} f'(\tau,\psi,k,t,\theta)}{(r(\tau,\psi))^2-2r(\tau,\psi)\cos(\varphi(\tau,\psi,k))+1}\\
  \times[r(\tau,\psi)\cos(\xi'(\tau,\psi,k,t,\theta)-\varphi(\tau,\psi,k))-\cos(\xi'(\tau,\psi,k,t,\theta))],
  \end{multline}
  \begin{multline}\label{eq:MLF_int3_P'Im}
  P_{\rho,\mu}^{\prime Im}(\tau,\psi,k,t,\theta) = \frac{\rho\tau}{2\pi} \frac{(t r(\tau,\psi))^{\rho(1-\mu_R)} f'(\tau,\psi,k,t,\theta)}{(r(\tau,\psi))^2-2r(\tau,\psi)\cos(\varphi(\tau,\psi,k))+1}\\
  \times[r(\tau,\psi)\sin(\xi'(\tau,\psi,k,t,\theta)-\varphi(\tau,\psi,k))-\sin(\xi'(\tau,\psi,k,t,\theta))],
\end{multline}
where $r(\tau,\psi)=\sqrt{\tau^2+2\tau\cos\psi+1}$, $\varphi(\tau,\psi,k)=\arctan\left(\frac{\tau\sin\psi}{\tau\cos\psi+1}\right)+k\pi$ and
\begin{align}
  f'(\tau,\psi,k,t,\theta) &= \exp\left\{(tr(\tau,\psi))^\rho\cos(\rho(\theta+\varphi(\tau,\psi,k)))
  + \rho\mu_I(\theta+\varphi(\tau,\psi,k))\right\},\label{eq:MLF_int3_f'}\\
  \xi'(\tau,\psi,k,t,\theta)&= (t r(\tau,\psi))^\rho\sin(\rho(\theta+\varphi(\tau,\psi,k)))\nonumber\\
  &- \rho\mu_I\ln(r(\tau,\psi)t) +\rho(1-\mu_R)(\theta+\varphi(\tau,\psi,k))+\psi.\label{eq:MLF_int3_xi'}
\end{align}

\item If $1/2<\rho\leqslant1$, $\frac{\pi}{2\rho}<\delta_{1\rho}<\pi$, $\delta_{2\rho}=\pi$, then for any complex $z=t e^{i\theta}$, satisfying the condition $\frac{\pi}{2\rho}<\theta<-\frac{\pi}{2\rho}+\delta_{1\rho}+\pi$, the Mittag-Leffler function can be represented in the form
    \begin{multline}\label{eq:MLF_int3_case3}
      E_{\rho,\mu}(z)=\int_{0}^{1-\varepsilon_1}K_{\rho,\mu}^{\prime Re}(r,\pi,t,\theta) dr
      + \int_{-\pi}^{0} P_{\rho,\mu}^{\prime Re}(\varepsilon_1,\psi,0,t,\theta) d\psi\\
      + \int_{1+\varepsilon_1}^{\infty}K_{\rho,\mu}^{\prime Re}(r,\pi,t,\theta) dr
      - \int_{0}^{\infty} K_{\rho,\mu}^{\prime Re}(r,-\delta_{1\rho},t,\theta) dr\\
      +i\left\{ \int_{0}^{1-\varepsilon_1}K_{\rho,\mu}^{\prime Im}(r,\pi,t,\theta) dr
      + \int_{-\pi}^{0} P_{\rho,\mu}^{\prime Im}(\varepsilon_1,\psi,0,t,\theta) d\psi\right.\\
     \left. + \int_{1+\varepsilon_1}^{\infty}K_{\rho,\mu}^{\prime Im}(r,\pi,t,\theta) dr
      - \int_{0}^{\infty} K_{\rho,\mu}^{\prime Im}(r,-\delta_{1\rho},t,\theta) dr\right\}.
    \end{multline}

\item If $1/2<\rho\leqslant1$ and $\delta_{1\rho}=\delta_{2\rho}=\pi$, then for any complex $z=t e^{i\theta}$ satisfying the condition $\frac{\pi}{2\rho}<\theta<-\frac{\pi}{2\rho}+2\pi$, the Mittag-Leffler function can be represented in the form
\begin{multline}\label{eq:MLF_int3_case4}
  E_{\rho,\mu}(z)= \int_{0}^{1-\varepsilon_1}K_{\rho,\mu}^{Re}(r,t,\theta) dr
  + \int_{-\pi}^{0} P_{\rho,\mu}^{\prime Re}(\varepsilon_1,\psi,0,t,\theta) d\psi\\
  +\int_{-2\pi}^{-\pi} P_{\rho,\mu}^{\prime Re}(\varepsilon_1,\psi,-2,t,\theta) d\psi
  + \int_{1+\varepsilon_1}^{\infty}K_{\rho,\mu}^{Re}(r,t,\theta) dr\\
  +i\left\{ \int_{0}^{1-\varepsilon_1}K_{\rho,\mu}^{Im}(r,t,\theta) dr
  + \int_{-\pi}^{0} P_{\rho,\mu}^{\prime Im}(\varepsilon_1,\psi,0,t,\theta) d\psi\right.\\
  \left.+\int_{-2\pi}^{-\pi} P_{\rho,\mu}^{\prime Im}(\varepsilon_1,\psi,-2,t,\theta) d\psi
  + \int_{1+\varepsilon_1}^{\infty}K_{\rho,\mu}^{Im}(r,t,\theta) dr\right\},
\end{multline}
where $K_{\rho,\mu}^{Re}(r,t,\theta)$ and $K_{\rho,\mu}^{Im}(r,t,\theta)$ are defined by the expressions  (\ref{eq:K_Re_piRho_corol_int2}) and (\ref{eq:K_Im_piRho_corol_int2}) respectively.

\item If $\delta_{1\rho}=\delta_{2\rho}=\delta_\rho$, then for any $\delta_\rho$ satisfying the conditions $\frac{\pi}{2\rho}<\delta_\rho\leqslant\frac{\pi}{\rho}$, if $\rho>1$  and  $\frac{\pi}{2\rho}<\delta_\rho<\pi$, if $1/2<\rho\leqslant1$  and any complex $z=t e^{i\theta}$ satisfying the condition $\frac{\pi}{2\rho}-\delta_{\rho}+\pi<\theta<-\frac{\pi}{2\rho}+\delta_{\rho}+\pi$, the Mittag-Leffler function can be represented in the form
\begin{equation}\label{eq:MLF_int3_deltaRho}
E_{\rho,\mu}(z)=\int_{0}^{\infty}K_{\rho,\mu}^{Re}(r, \delta_{\rho},t,\theta) dr
+i \int_{0}^{\infty}K_{\rho,\mu}^{Im}(r,\delta_{\rho},t,\theta) dr,
\end{equation}
where $K_{\rho,\mu}^{Re}(r, \delta_{\rho},t,\theta)$ and $K_{\rho,\mu}^{Im}(r, \delta_{\rho},t,\theta)$ are defined by the expressions (\ref{eq:K_Re_deltaRho_corol_int2}) and (\ref{eq:K_Im_deltaRho_corol_int2}) respectively.

\item If $\delta_\rho=\pi/\rho$ and $\rho>1$ then for any complex $z=te^{i\theta}$ satisfying the condition $-\frac{\pi}{2\rho}+\pi<\theta<\frac{\pi}{2\rho}+\pi$, the Mittag-Leffler function can be represented in the form
\begin{equation}\label{eq:MLF_int3_piRho}
E_{\rho,\mu}(z)=\int_{0}^{\infty}K_{\rho,\mu}^{Re}(r,t,\theta) dr
+i \int_{0}^{\infty}K_{\rho,\mu}^{Im}(r,t,\theta) dr,
\end{equation}
where $K_{\rho,\mu}^{Re}(r,t,\theta)$ and $K_{\rho,\mu}^{Im}(r,t,\theta)$ are defined by the expressions (\ref{eq:K_Re_piRho_corol_int2}) and (\ref{eq:K_Im_piRho_corol_int2}) respectively.
\end{enumerate}
\end{theorem}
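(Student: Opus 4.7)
The plan is to reduce every one of the six cases to the complex–variable integral representations of Theorem~3 and Corollary~3 of \cite{Saenko2020d} and then to separate real and imaginary parts by reusing the decomposition of the kernel $K_{\rho,\mu}(r,\varphi_1,\varphi_2,z)$ that was already carried out in the proof of Theorem~\ref{lemm:MLF_int2}. Case~1 follows almost for free: Theorem~3 of \cite{Saenko2020d} gives
\begin{equation*}
E_{\rho,\mu}(z)=\int_0^\infty K_{\rho,\mu}(r,-\delta_{1\rho},\delta_{2\rho},z)\,dr,
\end{equation*}
and substituting $z=te^{i\theta}$, $\mu=\mu_R+i\mu_I$ together with the identity $K_{\rho,\mu}(r,\varphi_1,\varphi_2,z)=K_{\rho,\mu}^{Re}(r,\varphi_1,\varphi_2,t,\theta)+iK_{\rho,\mu}^{Im}(r,\varphi_1,\varphi_2,t,\theta)$ proved in (\ref{eq:KRe+KIm_lemm_int2}) delivers (\ref{eq:MLF_int3}). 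The constraints on $\rho$, $\delta_{1\rho}$, $\delta_{2\rho}$ and $\theta$ are simply inherited from \cite{Saenko2020d}.

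Cases~5 and~6 are specialisations of case~1 that mirror the argument used in Corollary~\ref{coroll:MLF_int2_delatRho}. Setting $\delta_{1\rho}=\delta_{2\rho}=\delta_\rho$ in (\ref{eq:MLF_int3}) yields the Parametrization~2 formula (\ref{eq:MLF_int3_deltaRho}): one factor $(r^2+2r\cos\delta_\rho+1)$ in the denominator of $K_{\rho,\mu}^{Re}, K_{\rho,\mu}^{Im}$ cancels against the matching factor in the numerator, and what remains is exactly (\ref{eq:K_Re_deltaRho_corol_int2})–(\ref{eq:K_Im_deltaRho_corol_int2}). Specialising further to $\delta_\rho=\pi/\rho$ (which forces $\rho\geqslant1$ by the constraint (\ref{eq:deltaRho_cond_corol_int3})) and invoking the identities (\ref{eq:f1_corol_int2})–(\ref{eq:xi2_corol_int2}) for $f$ and $\xi$ at the arguments $\pm\pi/\rho-\pi$ produces (\ref{eq:MLF_int3_piRho}) with kernels (\ref{eq:K_Re_piRho_corol_int2})–(\ref{eq:K_Im_piRho_corol_int2}); the $\theta$–range tightens to $-\pi/(2\rho)+\pi<\theta<\pi/(2\rho)+\pi$ automatically because $\delta_\rho=\pi/\rho$.

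The main obstacle is concentrated in cases~2, 3 and 4, in which at least one of $\delta_{1\rho},\delta_{2\rho}$ equals $\pi$, so that the ray of integration in \cite{Saenko2020d} passes through the singular point $\zeta=1$ of the integrand. Corollary~3 of \cite{Saenko2020d} bypasses this point by cutting the ray at $r=1\mp\varepsilon_1$ and closing the two halves with a short arc $\zeta=1+\varepsilon_1 e^{i\psi}$, the branch of $\arg\zeta$ on the arc being selected by an integer index $k$. I would treat the straight pieces first: when $\delta_{1\rho}=\pi$ or $\delta_{2\rho}=\pi$ one of the factors $r^2+2r\cos(\pm\pi)+1=(r-1)^2$ cancels one of the two factors in the denominator of $K_{\rho,\mu}^{Re},K_{\rho,\mu}^{Im}$, leaving precisely the single–angle kernels $K_{\rho,\mu}^{\prime Re}, K_{\rho,\mu}^{\prime Im}$ of (\ref{eq:MLF_int3_K'Re})–(\ref{eq:MLF_int3_K'Im}). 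The arc piece is the delicate part: on $\zeta=1+\tau e^{i\psi}$ one sets $r(\tau,\psi)=|\zeta|=\sqrt{\tau^2+2\tau\cos\psi+1}$ and $\varphi(\tau,\psi,k)=\arg\zeta=\arctan\!\bigl(\tfrac{\tau\sin\psi}{\tau\cos\psi+1}\bigr)+k\pi$; substituting $z=te^{i\theta}$, $\mu=\mu_R+i\mu_I$ into the complex arc integrand given in Corollary~3 of \cite{Saenko2020d} and separating real and imaginary parts produces the kernels $P_{\rho,\mu}^{\prime Re}, P_{\rho,\mu}^{\prime Im}$ of (\ref{eq:MLF_int3_P'Re})–(\ref{eq:MLF_int3_P'Im}) with the auxiliary functions $f'$ and $\xi'$ in (\ref{eq:MLF_int3_f'})–(\ref{eq:MLF_int3_xi'}). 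Assembling these building blocks according to which rays are affected by the singularity — a single arc over $\psi\in[-2\pi,-\pi]$ with $k=-2$ in case~2, a single arc over $\psi\in[-\pi,0]$ with $k=0$ in case~3, and both arcs in case~4 (where the untouched straight contributions come from the kernels (\ref{eq:K_Re_piRho_corol_int2})–(\ref{eq:K_Im_piRho_corol_int2}) at $\delta_\rho=\pi$, valid because $1/2<\rho\leqslant1$ forbids the earlier $\rho\geqslant1$ restriction only when $\delta_\rho=\pi/\rho$, not when $\delta_\rho=\pi$) — yields (\ref{eq:MLF_int3_case2}), (\ref{eq:MLF_int3_case3}) and (\ref{eq:MLF_int3_case4}) in turn. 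The most error‑prone step is precisely the book‑keeping of signs, of branch indices $k$, and of $\psi$‑intervals that match each of the three singular configurations.
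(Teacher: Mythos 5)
Your proposal follows essentially the same route as the paper's proof: every case is reduced to the corresponding complex-kernel representation of Theorem~3/Corollary~3 of \cite{Saenko2020d}, the substitutions $z=te^{i\theta}$, $\mu=\mu_R+i\mu_I$ are made, and the real/imaginary separation of $K_{\rho,\mu}$, $K'_{\rho,\mu}$ and the arc kernel $P'_{\rho,\mu}$ is carried out exactly as in (\ref{eq:KRe+KIm_lemm_int2}), (\ref{eq:MLF_int3_K'_sum}) and (\ref{eq:MLF_int3_P'_sum}), the only cosmetic difference being that you obtain the single-ray kernels $K'^{Re},K'^{Im}$ by splitting the two-angle kernel into its two terms, while the paper transforms the definition (56) of \cite{Saenko2020d} directly. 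One small correction: in Case~6 the constraint (\ref{eq:deltaRho_cond_corol_int3}) forces $\rho>1$ strictly rather than $\rho\geqslant1$, since for $1/2<\rho\leqslant1$ it requires $\delta_\rho<\pi\leqslant\pi/\rho$; the paper makes this explicit by noting that at $\rho=1$ the contour would pass through the singular point $\zeta=1$ and the integrals in (\ref{eq:MLF_int3_deltaRho}) would diverge.
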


\begin{proof}
\emph{Case 1.} The integral representation (\ref{eq:MLF_int3}) directly follows  from  Theorem~3 and Corollary~3 formulated in the article \cite{Saenko2020d}.  In fact,  in the article \cite{Saenko2020d} (see Theorem~3 in \cite{Saenko2020d})  it was shown that when the conditions are met
\begin{equation*}
\begin{array}{ll}
  \frac{\pi}{2\rho}<\delta_{1\rho}\leqslant\frac{\pi}{\rho},\quad \frac{\pi}{2\rho}<\delta_{2\rho}\leqslant\frac{\pi}{\rho}, &\quad \mbox{if}\quad\rho>1,  \\
  \frac{\pi}{2\rho}<\delta_{1\rho}<\pi,\quad \frac{\pi}{2\rho}<\delta_{2\rho}<\pi, &\quad \mbox{if}\quad 1/2<\rho\leqslant1
  \end{array}
\end{equation*}
and
\begin{equation}\label{eq:MLF_int3_argZCond_tmp}
  \tfrac{\pi}{2\rho}-\delta_{2\rho}+\pi<\arg z<-\tfrac{\pi}{2\rho}+\delta_{1\rho}+\pi
\end{equation}
for the Mittag-Leffler function the representation  is true
\begin{equation}\label{eq:MLF_int3_tmp}
  E_{\rho,\mu}(z)=\int_{0}^{\infty}K_{\rho,\mu}(r,-\delta_{1\rho},\delta_{2\rho},z)dr,
\end{equation}
where $K_{\rho,\mu}(r,\varphi_1,\varphi_2,z)$ is defined by the expression  (8) in \cite{Saenko2020d}. To obtain the representation (\ref{eq:MLF_int3}) we write the complex number  $z$ in the form $z=t e^{i\theta}$, and the complex  parameter  $\mu$ in the form $\mu=\mu_R+i\mu_i$. As a result, the condition  (\ref{eq:MLF_int3_argZCond_tmp}) takes the form
$ \tfrac{\pi}{2\rho}-\delta_{2\rho}+\pi<\theta<-\tfrac{\pi}{2\rho}+\delta_{1\rho}+\pi$.

Next, we put these representations in the integrand  $K_{\rho,\mu}(r,\varphi_1,\varphi_2,z)$ and transform it. It should be noted that it has already been done when proving Theorem~\ref{lemm:MLF_int2}, where it was found that the kernel $K_{\rho,\mu}(r,\varphi_1,\varphi_2,z)$ breaks down into the sum of real and imaginary parts of this kernel (see~(\ref{eq:KRe+KIm_lemm_int2}))
\begin{equation}\label{eq:KRe+KIm_corol_int3}
  K_{\rho,\mu}(r,\varphi_1,\varphi_2,z)=K_{\rho,\mu}^{Re}(r,\varphi_1,\varphi_2,t,\theta) + i K_{\rho,\mu}^{Im}(r,\varphi_1,\varphi_2,t,\theta),
\end{equation}
where $K_{\rho,\mu}^{Re}(r,\varphi_1,\varphi_2,t,\theta)$ and $K_{\rho,\mu}^{Im}(r,\varphi_1,\varphi_2,t,\theta)$ are defined by the expressions (\ref{eq:K_Re_lemm_int2}) and (\ref{eq:K_Im_lemm_int2}) respectively.  Now by putting (\ref{eq:KRe+KIm_corol_int3}) in (\ref{eq:MLF_int3_tmp}) we get the representation (\ref{eq:MLF_int3}). The first part of the lemma is proved.

\emph{Case 2.} Now we consider the case $1/2<\rho\leqslant1, \delta_{1\rho}=\pi, \frac{\pi}{2\rho}<\delta_{2\rho}<\pi$.
According to  the article \cite{Saenko2020d}, in the case considered the Mittag-Leffler function has the form (see Theorem~3 in~\cite{Saenko2020d})
\begin{multline}\label{eq:MLF_int3_MLF_case2_tmp}
    E_{\rho,\mu}(z)=\int_{0}^{\infty}K_{\rho,\mu}^\prime(r,\delta_{2\rho},z)dr- \int_{0}^{1-\varepsilon_1}K_{\rho,\mu}^\prime(r,-\pi,z)dr -\\
    \int_{1+\varepsilon_1}^{\infty}K_{\rho,\mu}^\prime(r,-\pi,z)dr+
    \int_{-2\pi}^{-\pi}P_{\rho,\mu}^\prime(\varepsilon_1,\psi,-2,z)d\psi.
\end{multline}

  The representations for the functions $K'_{\rho,\mu}(r,\delta,z)$ and $P'_{\rho,\mu}(r,\psi,k,z)$  introduced in the paper \cite{Saenko2020d} when proving item~2 of Theorem~3 \cite{Saenko2020d} are more convenient for further proof. At the beginning we consider the function $K'_{\rho,\mu}(r,\delta,z)$. This function can be defined by following (see~(56) in~\cite{Saenko2020d})
\begin{equation}\label{eq:lemm_MLF_int1_K'_def}
  K^\prime_{\rho,\mu}(r,\delta,z)=\frac{\rho}{2\pi i}\frac{\exp\left\{\left(zre^{i(\delta-\pi)^\rho}\right)\right\} \left(zre^{i(\delta-\pi)}\right)^{\rho(1-\mu)}}{re^{i(\delta-\pi)}-1}e^{i(\delta-\pi)}.
\end{equation}
We will introduce for the function $K^\prime_{\rho,\mu}(r,\delta,z)$ the following notation $K'_{\rho,\mu}(r,\delta,t,\theta)\equiv K'_{\rho,\mu}(r,\delta,z)$ which is going to be used subsequently. We represent the complex number $z$ in the form $z=te^{i\theta}$ and $\mu=\mu_R+i\mu_I$ and in view of this we transform the function  $K'_{\rho,\mu}(r,\delta,z)$. Getting rid of the complexity in the denominator  (\ref{eq:lemm_MLF_int1_K'_def})  and making some transformations we get
\begin{multline}\label{eq:MLF_int3_K'_sum}
  K'_{\rho,\mu}(r,\delta,t,\theta)
  = \frac{\rho}{2\pi i} \frac{\exp\left\{\left((rt) e^{i(\delta-\pi+\theta)}\right)^\rho \right\}}{r^2+2r\cos\delta+1}
   \left((rt) e^{i(\delta-\pi+\theta)}\right)^{\rho(1-\mu_R-i\mu_I)} \left(r+e^{i\delta}\right)\\
  =\frac{\rho}{2\pi} \frac{(rt)^{\rho(1-\mu_R)} f(r,\delta-\pi,t,\theta)}{r^2+2r\cos\delta+1} \left(r\sin(\xi(r,\delta-\pi,t,\theta))+\sin(\xi(r,\delta-\pi,t,\theta)+\delta))\right.\\
  \left.- i\left\{r\cos(\xi(r,\delta-\pi,t,\theta))+\cos(\xi(r,\delta-\pi,t,\theta)+\delta))\right\}\right)\\
  = K_{\rho,\mu}^{\prime Re}(r,\delta,t,\theta) +i K_{\rho,\mu}^{\prime Im}(r,\delta,t,\theta),
\end{multline}
where the functions $f(r,\varphi,t,\theta)$ and $\xi(r,\varphi,t,\theta)$ have the form (\ref{eq:f_int2}) and (\ref{eq:xi_int2}) and $K_{\rho,\mu}^{\prime Re}(r,\delta,t,\theta)$ and $K_{\rho,\mu}^{\prime Im}(r,\delta,t,\theta)$ have the form (\ref{eq:MLF_int3_K'Re}) and (\ref{eq:MLF_int3_K'Im}) respectively.

Now consider the integrand $P'_{\rho,\mu}(r,\psi,k,z)$. The representation for this function introduced in  \cite{Saenko2020d} turns out to be more convenient when proving item~2 of Theorem~3 (see~(65) in~\cite{Saenko2020d})
\begin{equation}\label{eq:P'_fun}
  P_{\rho,\mu}^\prime(\tau,\psi,k,z)=i\tau\frac{\phi_{\rho,\mu}\left(r(\tau,\psi) e^{i\varphi(\tau,\psi,k)},z\right)} {r(\tau,\psi) e^{i\varphi(\tau,\psi,k)}-1} e^{i\psi},
\end{equation}
where
$\phi_{\rho,\mu}(\zeta,z)=\frac{\rho}{2\pi i}\exp\left\{(\zeta z)^\rho\right\}(\zeta z)^{\rho(1-\mu)}$,
$r(\tau,\psi)  =\sqrt{\tau^2+2\tau\cos\psi+1}$,
$\varphi(\tau,\psi,k)=\arctan\left(\frac{\tau\sin\psi}{\tau\cos\psi +1}\right)+k\pi$.

We represent in the expression (\ref{eq:P'_fun}) the complex numbers $z$ and $\mu$ in the form $z=te^{i\theta}$, $\mu=\mu_R+i\mu_I$. Then, we will get rid of the complexity in the denominator and use the definitions of the functions $r(\tau,\psi)$, $\varphi(\tau,\psi,k)$ and $\phi_{\rho,\mu}(\zeta,z)$. Making some transformations, we obtain
\begin{multline}\label{eq:MLF_int3_P'_sum}
  P_{\rho,\mu}^\prime(\tau,\psi,k,z)=i\tau\frac{\phi_{\rho,\mu}\left(r(\tau,\psi) e^{i\varphi(\tau,\psi,k)},te^{i\theta}\right) \left(r(\tau,\psi) e^{-i\varphi(\tau,\psi,k)}-1\right)} {\left(r(\tau,\psi) e^{i\varphi(\tau,\psi,k)}-1\right)\left(r(\tau,\psi) e^{-i\varphi(\tau,\psi,k)}-1\right)}  e^{i\psi}\\
   =\frac{\rho\tau}{2\pi} \frac{(t r(\tau,\psi))^{\rho(1-\mu_R)} f'(\tau,\psi,k,t,\theta)}{(r(\tau,\psi))^2- 2r(\tau,\psi)\cos(\varphi(\tau,\psi,k))+1}\\
   \times \Bigl( r(\tau,\psi)\cos(\xi'(\tau,\psi,k,t,\theta)-\varphi(\tau,\psi,k))
   -\cos(\xi'(\tau,\psi,k,t,\theta))\\
   +i\left\{r(\tau,\psi)\sin(\xi'(\tau,\psi,k,t,\theta)-\varphi(\tau,\psi,k))- \sin(\xi'(\tau,\psi,k,t,\theta))\right\}\Bigr)\\
    = P_{\rho,\mu}^{\prime Re}(\tau,\psi,k,t,\theta) + i P_{\rho,\mu}^{\prime Im}(\tau,\psi,k,t,\theta),
\end{multline}
where the functions $f'(\tau,\psi,k,t,\theta)$ and $\xi'(\tau,\psi,k,\theta)$ have the form (\ref{eq:MLF_int3_f'}) and (\ref{eq:MLF_int3_xi'})  the functions $P_{\rho,\mu}^{\prime Re}(\tau,\psi,k,t,\theta)$ and $P_{\rho,\mu}^{\prime Im}(\tau,\psi,k,t,\theta)$ have the form (\ref{eq:MLF_int3_P'Re}) and (\ref{eq:MLF_int3_P'Im}) respectively.
Now using (\ref{eq:MLF_int3_K'_sum}) and (\ref{eq:MLF_int3_P'_sum}) in (\ref{eq:MLF_int3_MLF_case2_tmp}) we arrive at the representation (\ref{eq:MLF_int3_case2}). Thus, the second point of the lemma is proved.

\emph{Case 3.} We consider now the case $1/2<\rho\leqslant1$, $\frac{\pi}{2\rho}<\delta_{1\rho}<\pi$, $\delta_{2\rho}=\pi$. It was shown in the article \cite{Saenko2020d} that at such values of parameters the function $E_{\rho,\mu}(z)$ can be represented in the form (see theorem~3 case~3 in \cite{Saenko2020d})
\begin{multline}\label{eq:lemm_MLF_int1_case3}
    E_{\rho,\mu}(z)=\int_{0}^{1-\varepsilon_1}K_{\rho,\mu}^\prime(r,\pi,z)dr- \int_{1+\varepsilon_1}^{\infty}K_{\rho,\mu}^\prime(r,\pi,z)dr +\\
    +\int_{-\pi}^{0}P_{\rho,\mu}^\prime(\varepsilon_1,\psi,0,z)d\psi-
    \int_{0}^{\infty}K_{\rho,\mu}^\prime(r,-\delta_{1\rho},z)dr,
  \end{multline}
where $\varepsilon_1$ is an arbitrary real number such that $0<\varepsilon_1<1$.

We will write down the complex numbers  $z$ and $\mu$ in the form$z=te^{i\theta}$, $\mu=\mu_R+i\mu_I$ and substitute  them in  (\ref{eq:lemm_MLF_int1_case3}). When proving the previous item of the theorem it was shown that such  a substitution the functions  $K'_{\rho,\mu}(r,\varphi,z)$ and $P'_{\rho,\mu}(\tau,\psi,k,z)$ take the form (\ref{eq:MLF_int3_K'_sum}) and (\ref{eq:MLF_int3_P'_sum}) respectively. If we use these expressions now in (\ref{eq:lemm_MLF_int1_case3}), then we will obtain (\ref{eq:MLF_int3_case3}). The third item of the lemma is proved.

\emph{Case 4.} Now we consider the case $1/2<\rho\leqslant1, \delta_{1\rho}=\delta_{2\rho}=\pi$. In the article \cite{Saenko2020d} (see Theorem 3 case 4 in \cite{Saenko2020d}) it was shown that in this case the function $E_{\rho,\mu}(z)$ can be represented in the form
\begin{multline}\label{eq:lemm_MLF_int1_case4}
  E_{\rho,\mu}(z)=\int_{0}^{1-\varepsilon_1}K_{\rho,\mu}(r,\pi,z)dr-
  \int_{1+\varepsilon_1}^{\infty}K_{\rho,\mu}(r,\pi,z)dr +\\
  +\int_{-\pi}^{0}P_{\rho,\mu}^\prime(\varepsilon_1,\psi,0,z)d\psi+
  \int_{-2\pi}^{-\pi}P_{\rho,\mu}^\prime(\varepsilon_1,\psi,-2,z)d\psi,
\end{multline}
where $0<\varepsilon_1<1$.

As before, we will represent the complex numbers $z$ and $\mu$ in the form $z=te^{i\theta}$, $\mu=\mu_R+i\mu_I$ and substitute these expressions in (\ref{eq:lemm_MLF_int1_case4}). We consider at first how the integrand $K_{\rho,\mu}(r,\delta,z)$ is transformed with such a substitution. This function is defined by the expression  (19) in \cite{Saenko2020d}. It should be pointed out that the function $K_{\rho,\mu}(r,\delta,z)$ is the abbreviated notation of the function $K_{\rho,\mu}(r,-\delta, \delta,z)$ (see~Corollary~1~in~\cite{Saenko2020d} and the expression~(24)~in~\cite{Saenko2020d}). Consequently, using that fact that  $K_{\rho,\mu}(r,\delta_\rho,z)\equiv K_{\rho,\mu}(r,-\delta_\rho,\delta_\rho,z)$, we obtain
$ K_{\rho,\mu}(r,-\pi, \pi,z)\equiv K_{\rho,\mu}(r,\pi,z).$  Then, from the expressions (\ref{eq:KRe+KIm_lemm_int2}) and (\ref{eq:KRe_deltaRho_KIm_deltaRho}) we obtain
\begin{equation}\label{eq:MLF_int3_K_piRho_sum}
  K_{\rho,\mu}(r,\pi,z)\equiv K_{\rho,\mu}(r,\pi,t,\theta)=K_{\rho,\mu}^{Re}(r,\pi,t,\theta)+iK_{\rho,\mu}^{Im}(r,\pi,t,\theta),
\end{equation}
where $K_{\rho,\mu}^{Re}(r,\delta,t,\theta)$ and $K_{\rho,\mu}^{Im}(r,\delta,t,\theta)$ are defined by the expressions (\ref{eq:K_Re_deltaRho_corol_int2}) and (\ref{eq:K_Im_deltaRho_corol_int2}), respectively.

It remains to consider how the integrand $P'_{\rho,\mu}(\tau,\psi,k,z)$ is transformed.  However, this issue has already been considered by us when dealing with \emph{Case 2}. It was obtained that this function can be represented  in the form  (\ref{eq:MLF_int3_P'_sum}). Now using  (\ref{eq:MLF_int3_K_piRho_sum}) and (\ref{eq:MLF_int3_P'_sum}) in (\ref{eq:lemm_MLF_int1_case4}) we get the representation (\ref{eq:MLF_int3_case4}). The fourth item of the lemma is proved.

\emph{Case 5.} Consider now the case when $\delta_{1\rho}=\delta_{2\rho}=\delta_\rho$.  This case is a particular case of the representation (\ref{eq:MLF_int3}). However, as it was mentioned earlier, at the same values of the parameters $\delta_{1\rho}$ and  $\delta_{2\rho}$ the integrand $K_{\rho,\mu}(r,\varphi_1,\varphi_2,t,\theta)$ becomes significantly simple. Therefore, the case of equal values of these parameters and the case $\delta_{1\rho}=\delta_{2\rho}=\pi/\rho$ are considered separately.

If $\delta_{1\rho}=\delta_{2\rho}=\delta_\rho$, then the inequalities (\ref{eq:deltaRho_cond_corol_int3}) become simple and take the form:
$\frac{\pi}{2\rho}<\delta_\rho\leqslant\frac{\pi}{\rho}$, if $\rho>1$, and  $\frac{\pi}{2\rho}<\delta_\rho<\pi$, if $1/2<\rho\leqslant1$, and the condition (\ref{eq:argZ_cond_corol_int3}) is written in the form $\frac{\pi}{2\rho}-\delta_{\rho}+\pi<\theta<-\frac{\pi}{2\rho}+\delta_{\rho}+\pi$. As a result, the representation (\ref{eq:MLF_int3}) takes the form
\begin{equation*}
  E_{\rho,\mu}(z)=\int_{0}^{\infty}K_{\rho,\mu}^{Re}(r, -\delta_{\rho},\delta_{\rho},t,\theta) dr
+i \int_{0}^{\infty}K_{\rho,\mu}^{Im}(r,-\delta_{\rho},\delta_{\rho},t,\theta) dr,
\end{equation*}
where $K_{\rho,\mu}^{Re}(r,\psi_1,\psi_2,t,\theta)$ and $K_{\rho,\mu}^{Im}(r,\psi_1,\psi_2,t,\theta)$ are defined by the expressions  (\ref{eq:K_Re_lemm_int2}) and (\ref{eq:K_Im_lemm_int2}) respectively. Now using in this expression (\ref{eq:KRe_deltaRho_KIm_deltaRho}) we obtain the representation (\ref{eq:MLF_int3_deltaRho}). The fifth item of the lemma is proved.

\emph{Case 6.} Consider the case $\delta_\rho=\pi/\rho$. The integral representation for the Mittag-Leffler function for this case is directly obtained from the case considered earlier. However, as it was mentioned before, the parameter $\delta_\rho$ could not take values more than the value $\pi$. From this it follows  $\pi/\rho\leqslant\pi$  only if  $\rho\geqslant1$. It should be noted that the value $\rho=1$ should be excluded, since in this case the segments $\Gamma_1$ and $\Gamma_2$ of the auxiliary contour $\Gamma$ (see~Theorem~3 and Fig.~2 in~\cite{Saenko2020d}) will pass through the singular point $\zeta=1$. As a result, the integrals in (\ref{eq:MLF_int3_deltaRho}) will diverge. In addition, this case was considered in item 4 of the current theorem. Thus, we arrive at the condition that the case $\delta_\rho=\pi/\rho$ in the representation ``B'' is possible if $\rho>1$.

Now substituting in (\ref{eq:MLF_int3_deltaRho}) the value  $\delta_\rho=\pi/\rho$ we obtain
\begin{equation*}
  E_{\rho,\mu}(z)=\int_{0}^{\infty}K_{\rho,\mu}^{Re}(r,\pi/\rho,z) dr
+i \int_{0}^{\infty}K_{\rho,\mu}^{Im}(r,\pi/\rho,z) dr.
\end{equation*}
The expressions for $K_{\rho,\mu}^{Re}(r,\pi/\rho,z)$ and $K_{\rho,\mu}^{Im}(r,\pi/\rho,z)$ were obtained earlier when proving the second part of corollary~\ref{coroll:MLF_int2_delatRho}. Using the notation (\ref{eq:KRe_piRho}) and (\ref{eq:KIm_piRho}) in this expression we arrive at the representation (\ref{eq:MLF_int3_piRho}).
\begin{flushright}
  $\Box$
\end{flushright}
\end{proof}

It should be noted that in this theorem the results for Parameterizations 1, 2 and 3 are combined. Items 1-4 are related to Parameterization~1. At the same time, Item 4 can be attributed to both Parameterization~1 and Parameterization~2. Item~5 is related to Parameterization~2, and Item~6 to Parameterization~3.

\begin{figure}
  \centering
  \includegraphics[width=0.43\textwidth]{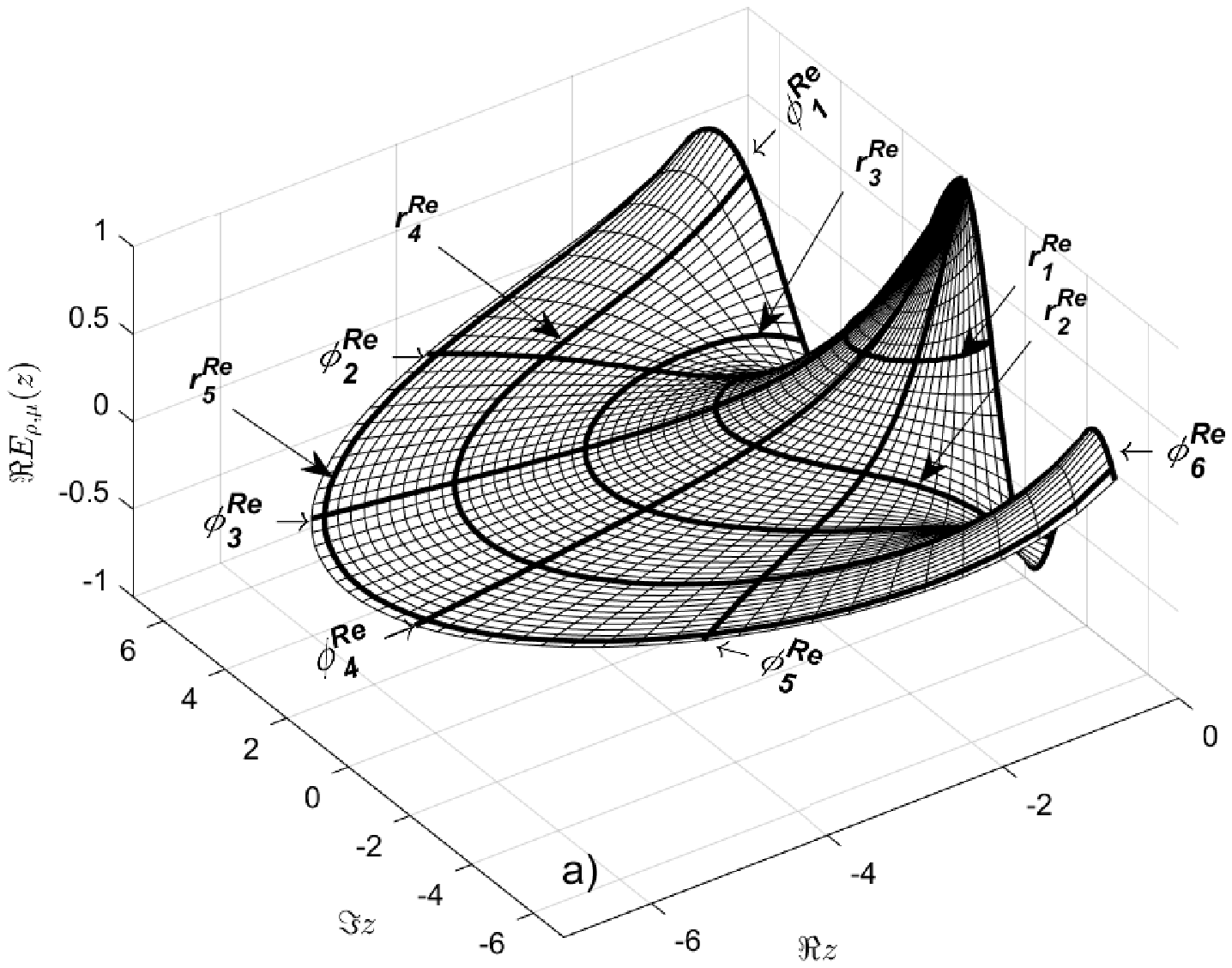}\hfill
  \includegraphics[width=0.43\textwidth]{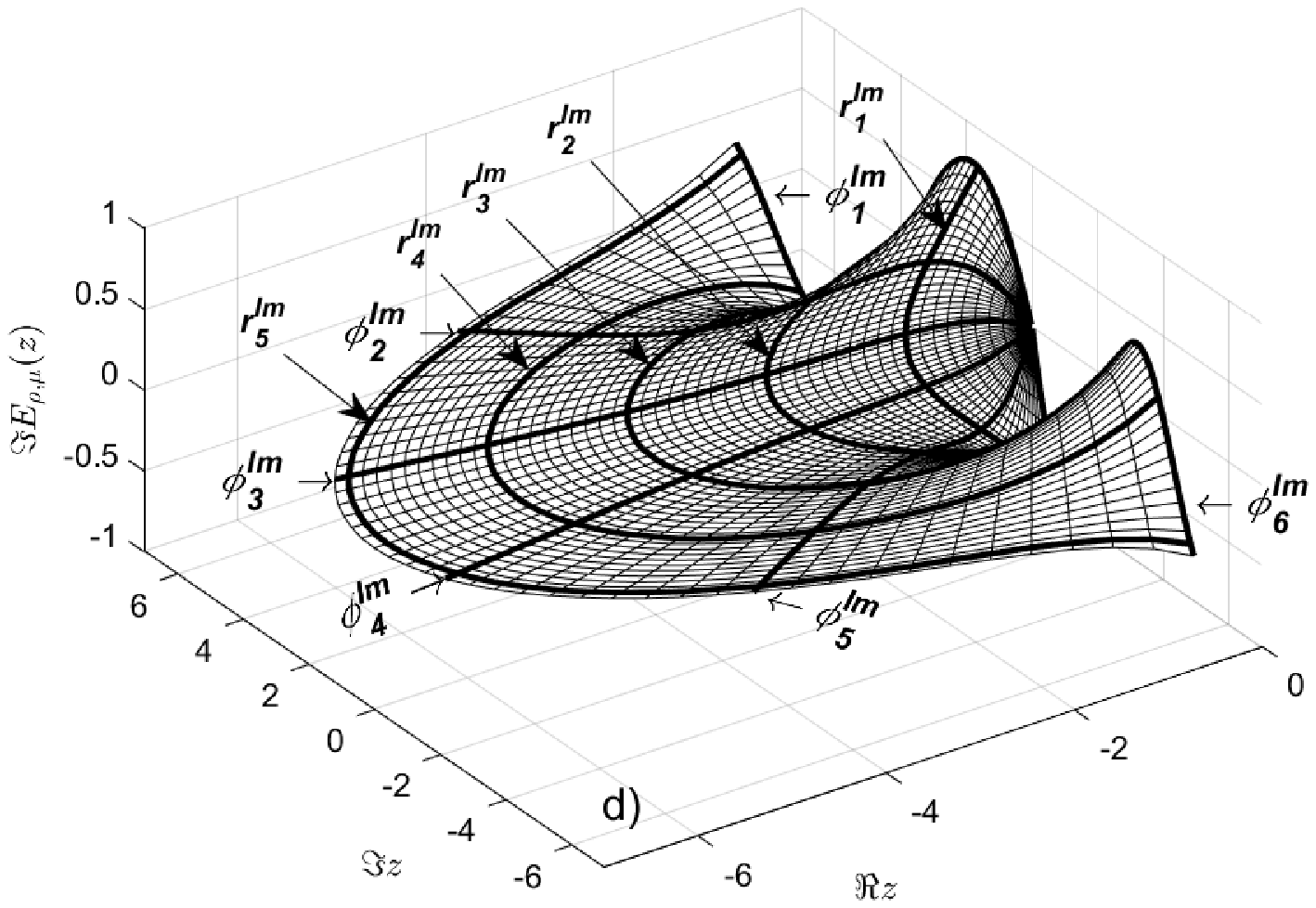}\\[4mm]
  \includegraphics[width=0.43\textwidth]{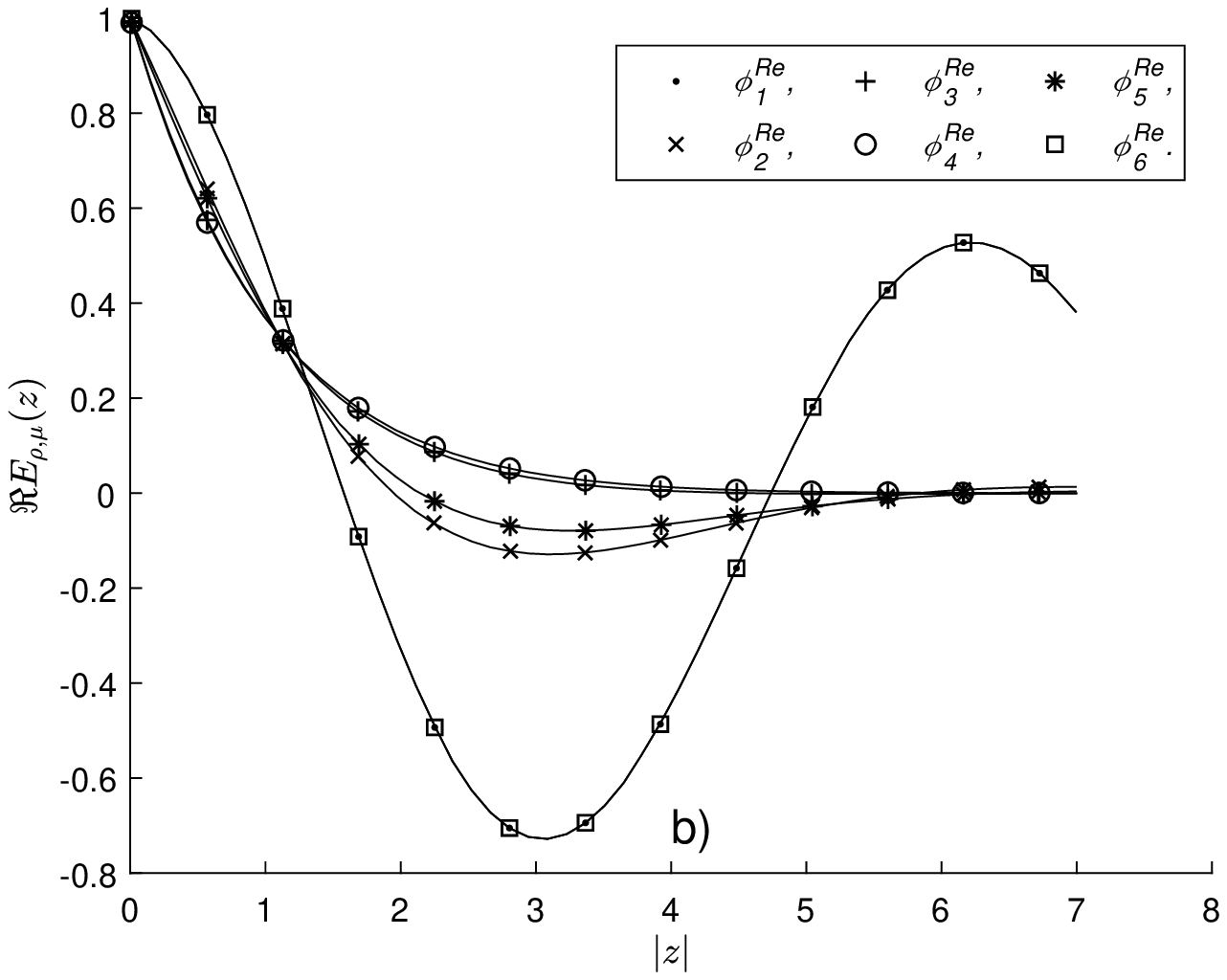}\hfill
  \includegraphics[width=0.43\textwidth]{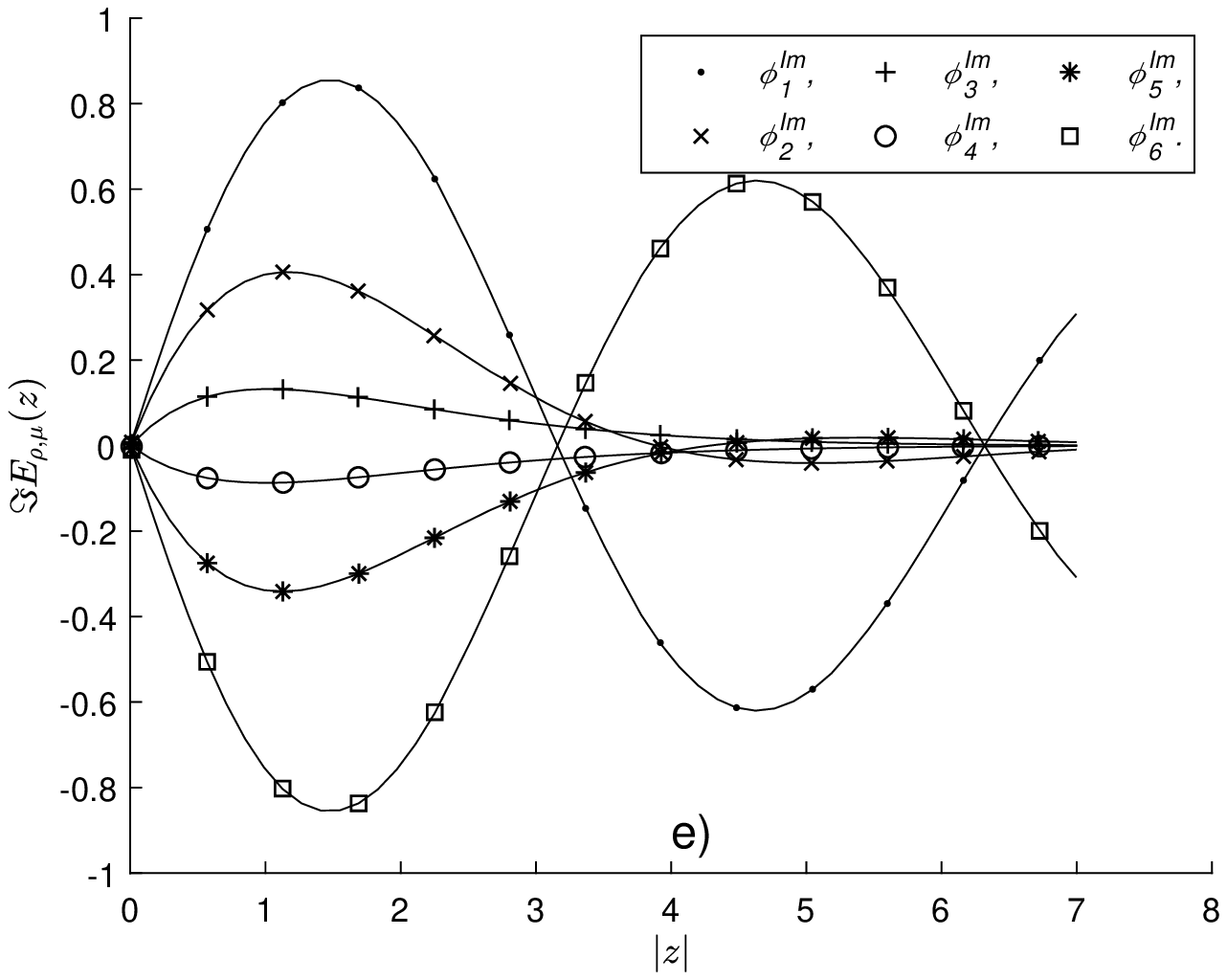}\\[4mm]
  \includegraphics[width=0.43\textwidth]{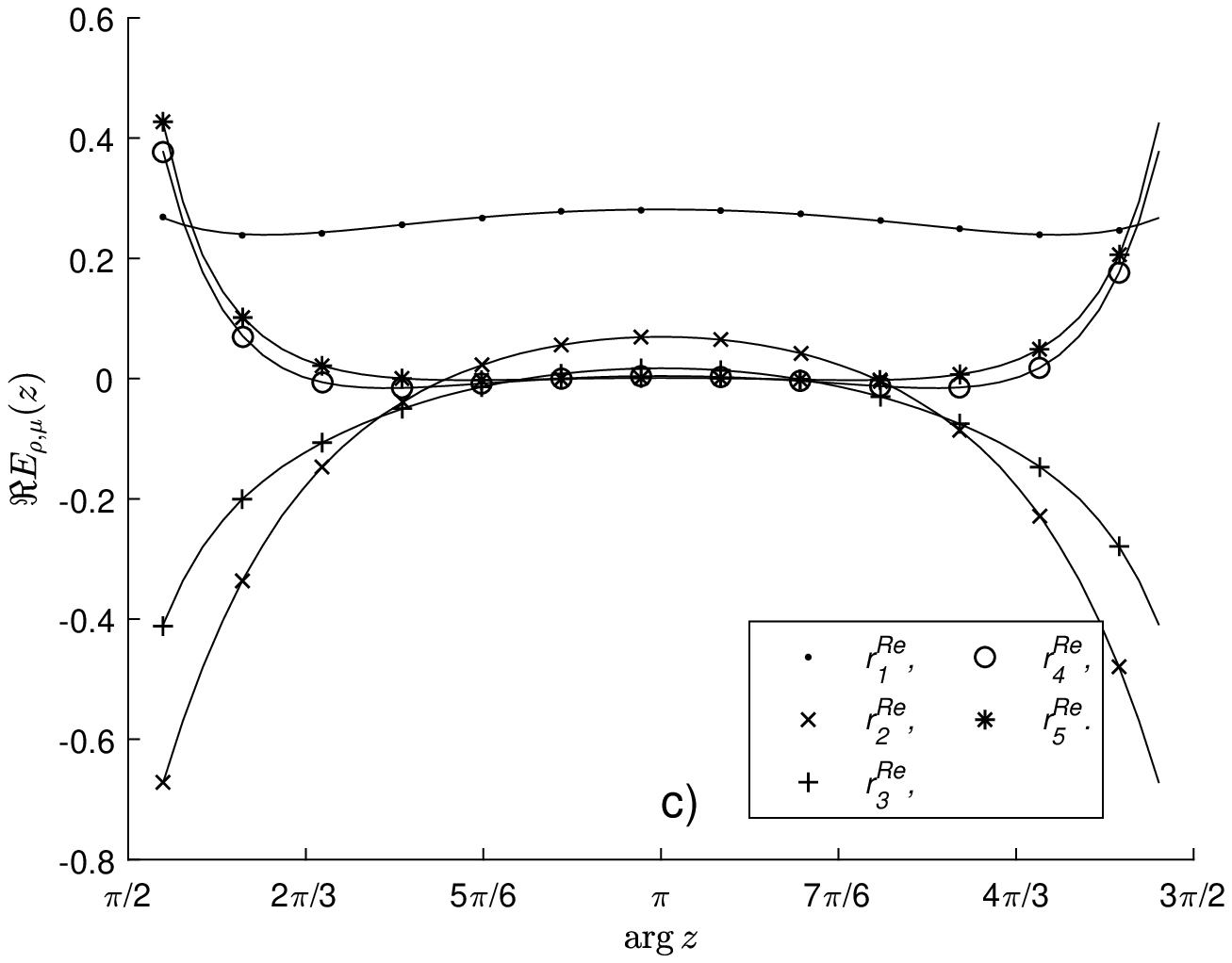}\hfill
  \includegraphics[width=0.43\textwidth]{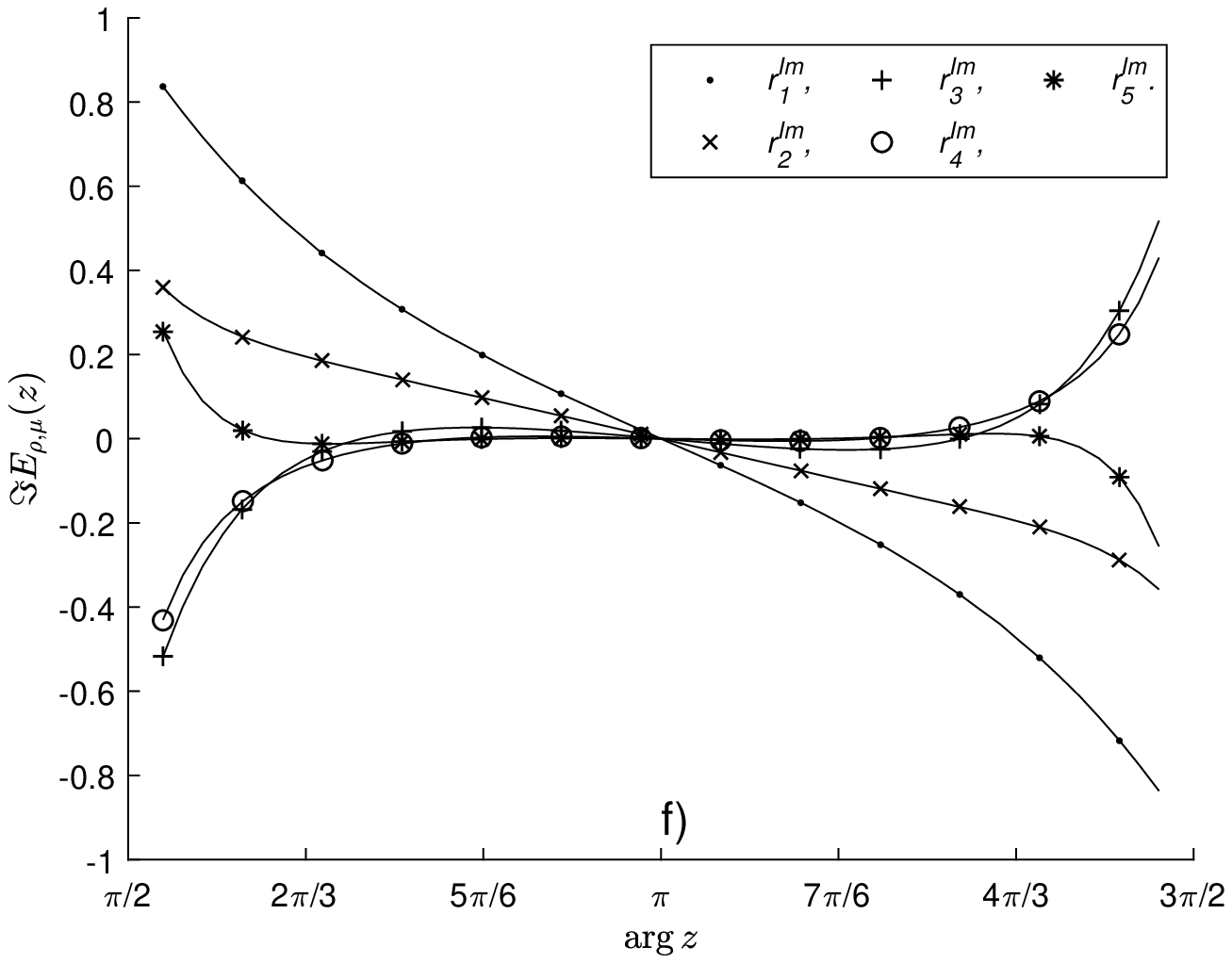}
  \caption{The function $E_{\rho,\mu}(z)$ for $\rho=1, \mu=1$ and $\delta_{1\rho}=35\pi/36$, $\delta_{2\rho}=35\pi/36$, $0.01\leqslant|z|\leqslant7, 3\pi/2-35\pi/36<\arg z<\pi/2+35\pi/36$.  On the figures a) and d) the surfaces – the formula (\ref{eq:MLF_int3}), the curves – the formula (\ref{eq:MLF_mu<=1}). On the figures b), c), e) and f) the curves – the formula (\ref{eq:MLF_int3}), the points – the formula (\ref{eq:MLF_mu<=1})
  }\label{fig:MLF_int3_case1_rho1_mu1_BA}
\end{figure}

\begin{figure}
  \centering
  \includegraphics[width=0.43\textwidth]{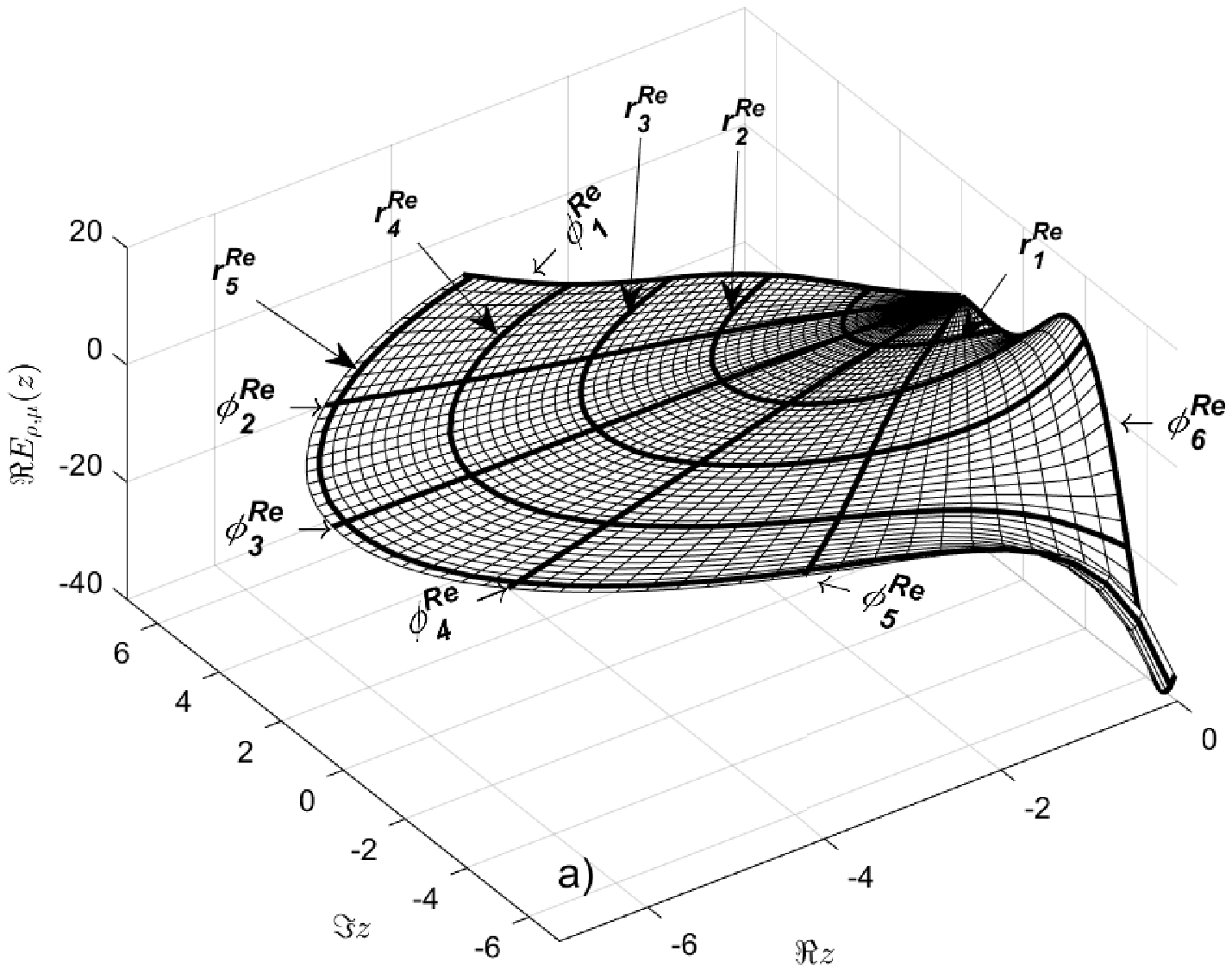}\hfill
  \includegraphics[width=0.43\textwidth]{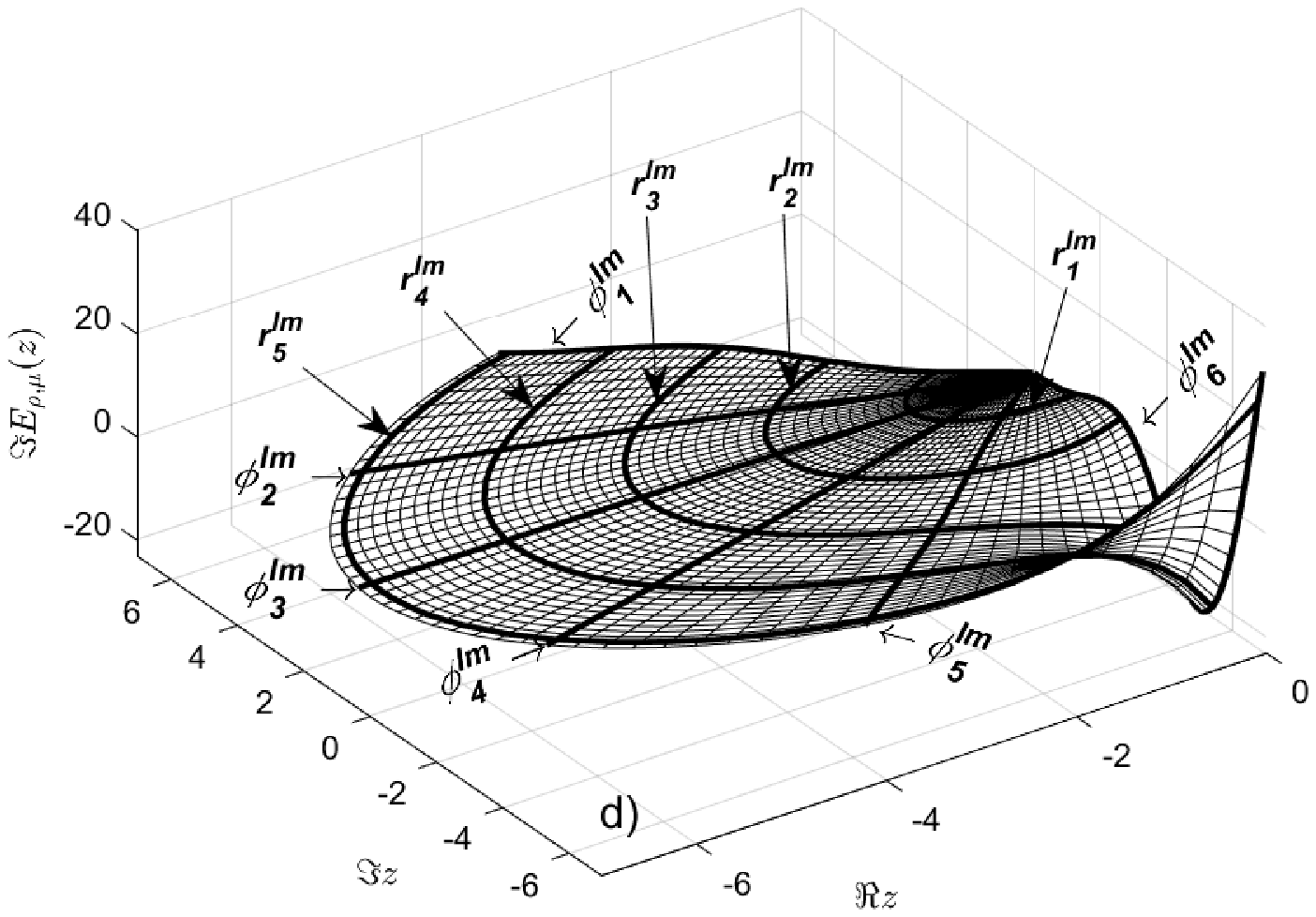}\\[4mm]
  \includegraphics[width=0.43\textwidth]{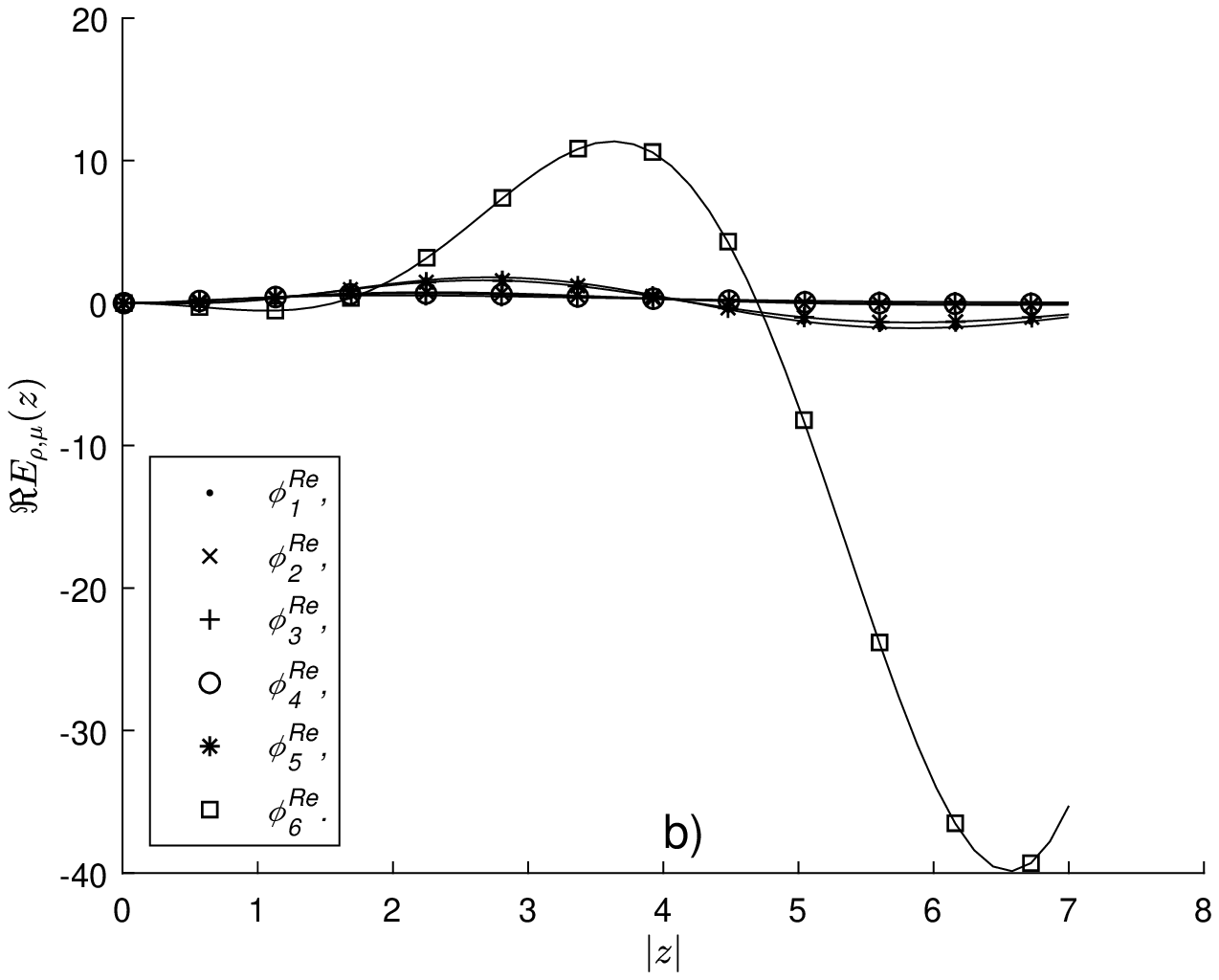}\hfill
  \includegraphics[width=0.43\textwidth]{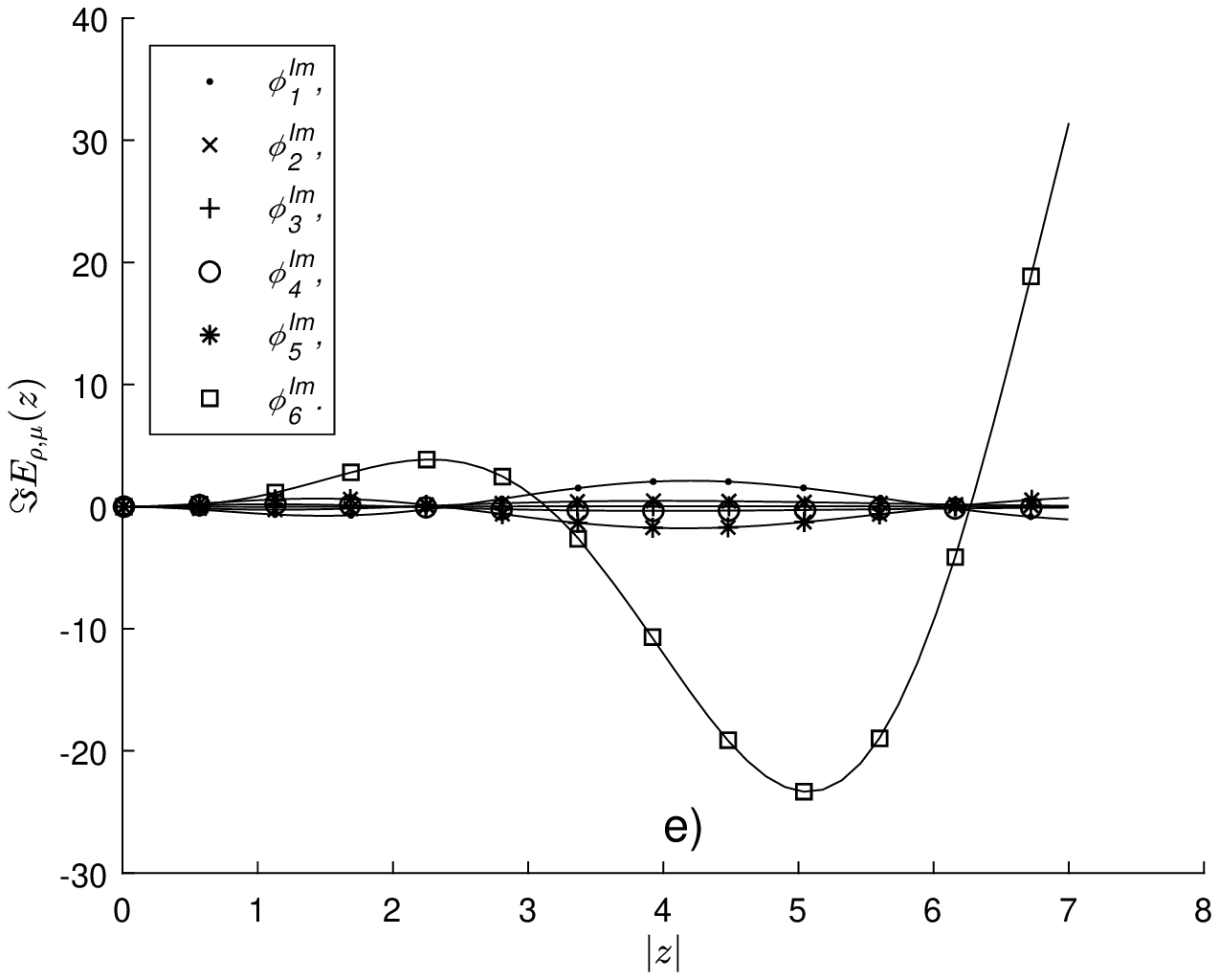}\\[4mm]
  \includegraphics[width=0.43\textwidth]{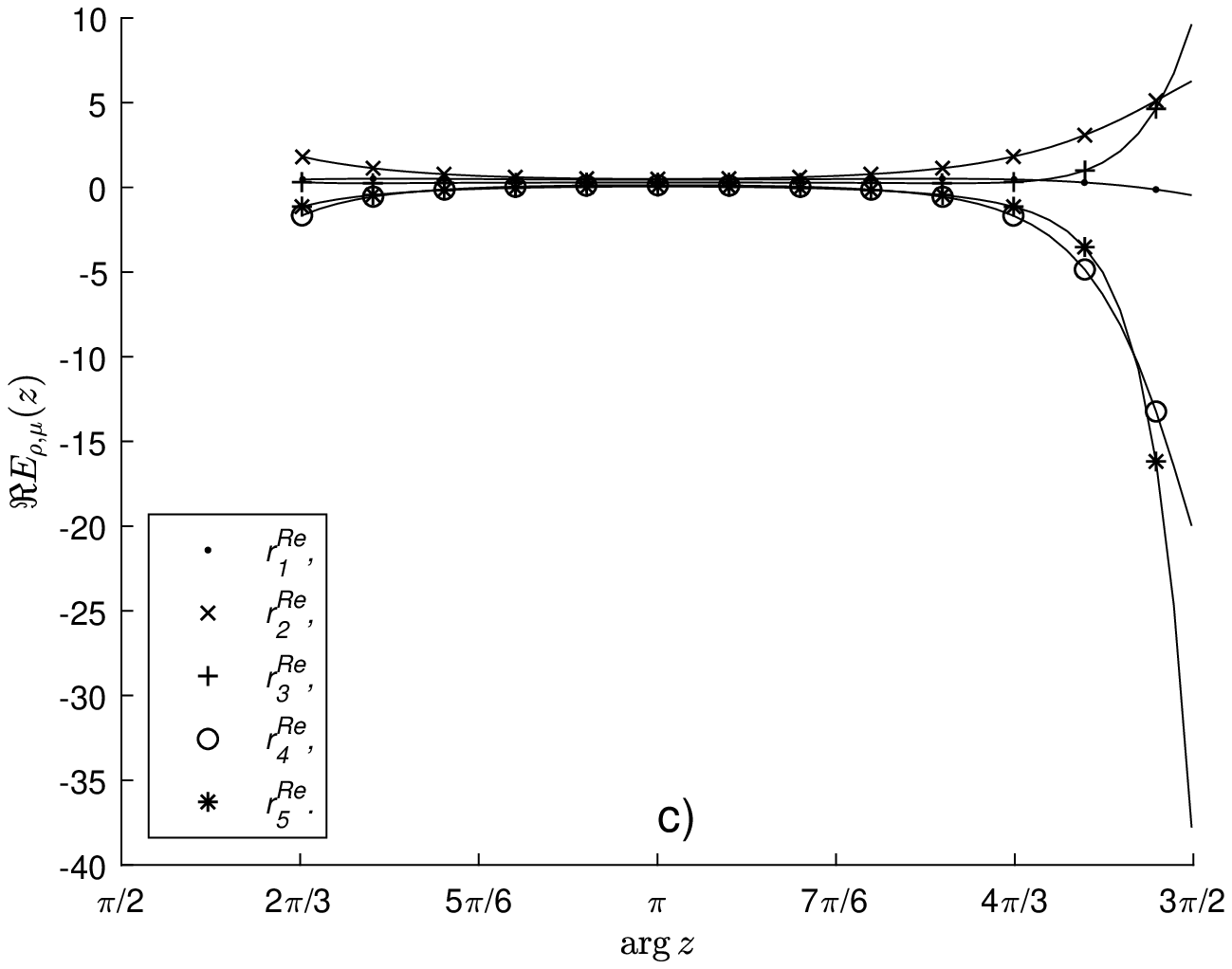}\hfill
  \includegraphics[width=0.43\textwidth]{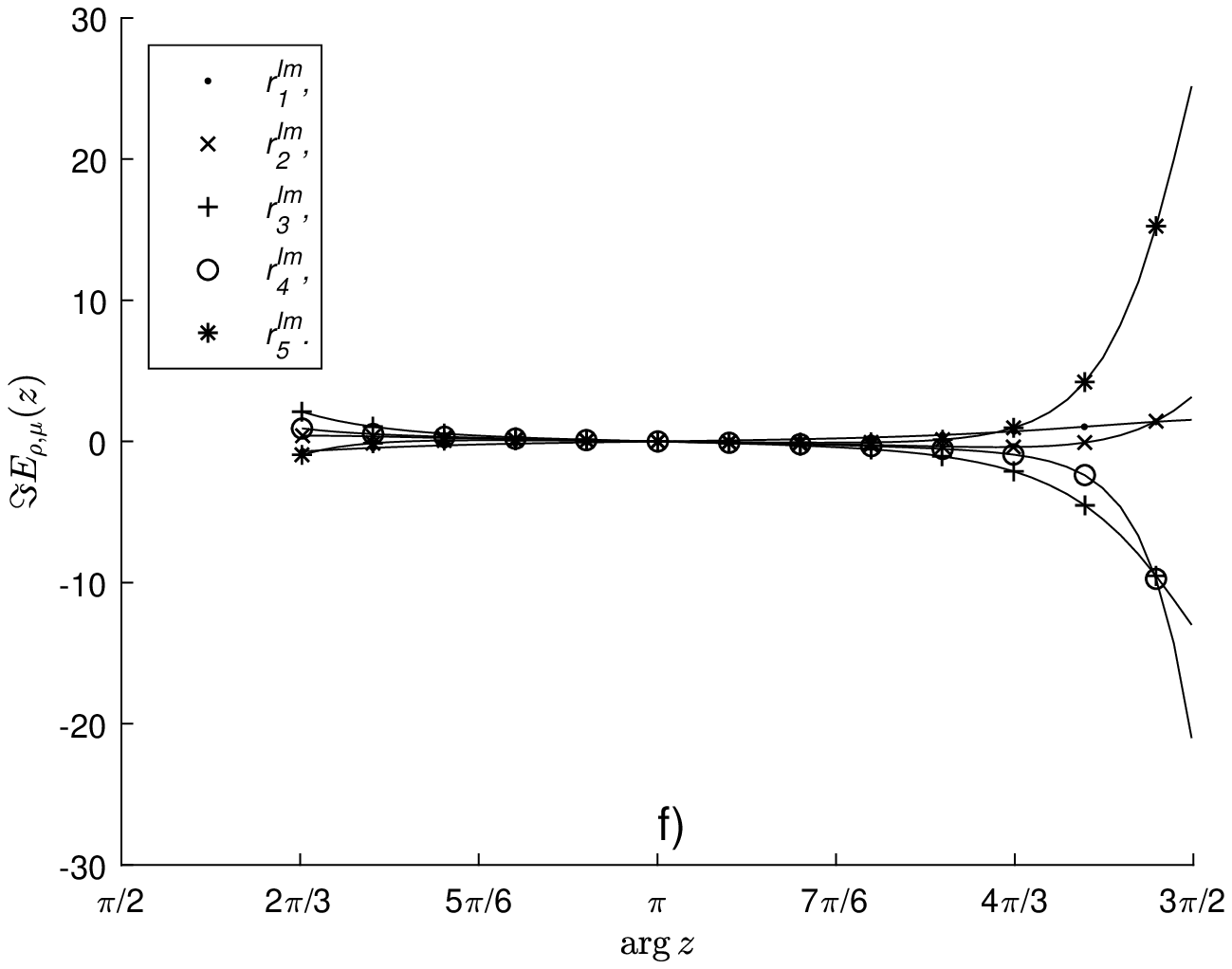}
  \caption{The function $E_{\rho,\mu}(z)$ for $\rho=1, \mu=-1$ and $\delta_{1\rho}=\pi$, $\delta_{2\rho}=5\pi/6$, $0.01\leqslant|z|\leqslant7, 2\pi/3<\arg z<3\pi/2$.  On the figures a) and d) the surfaces – the formula (\ref{eq:MLF_int3_case2}), the curves – the formula (\ref{eq:MLF_mu<=1}). On the figures b), c), e) and f) the curves – the formula (\ref{eq:MLF_int3_case2}), the points – the formula (\ref{eq:MLF_mu<=1})
  }\label{fig:MLF_int3_case2_rho1_mu_1_BA}
\end{figure}

\begin{figure}
  \centering
  \includegraphics[width=0.43\textwidth]{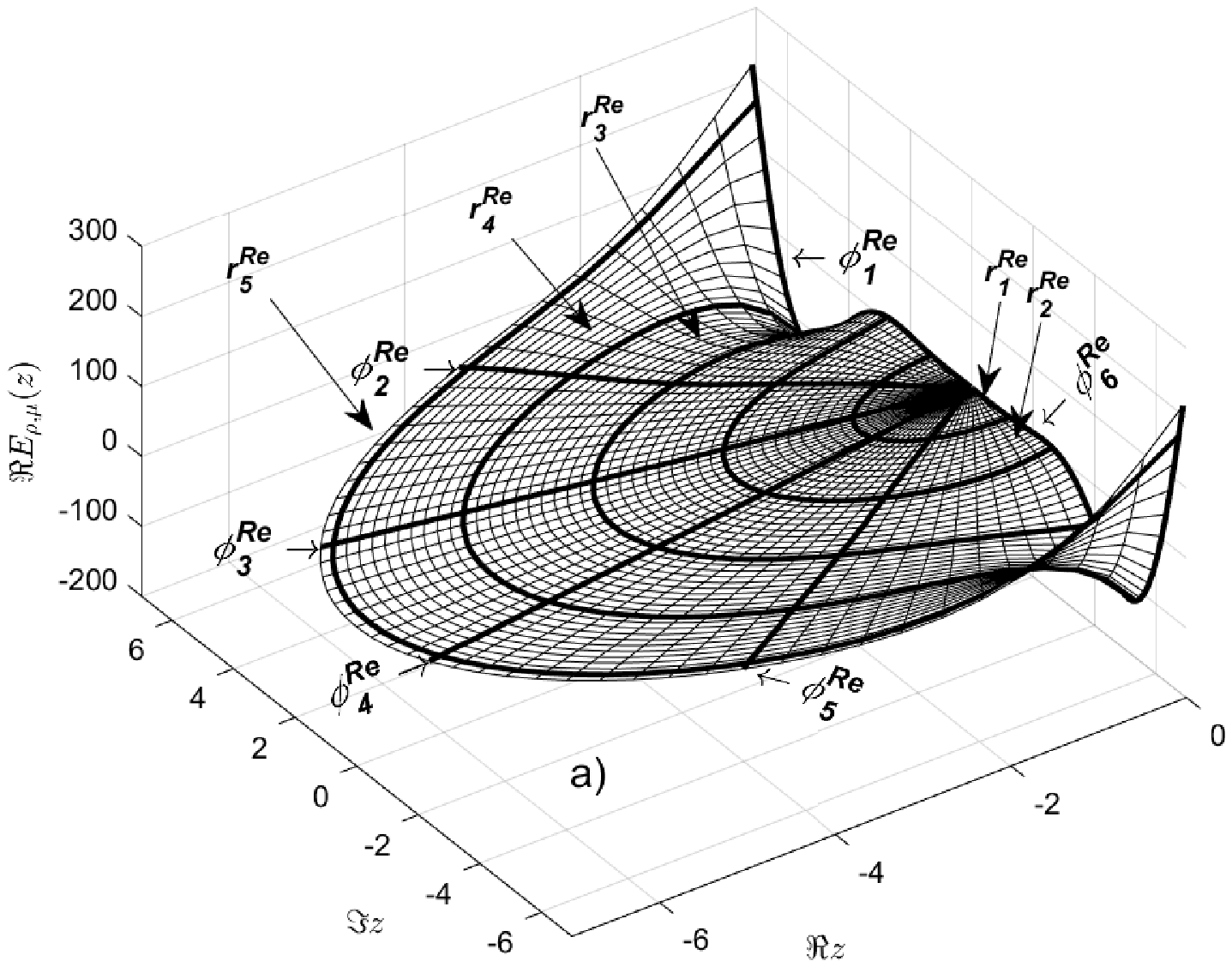}\hfill
  \includegraphics[width=0.43\textwidth]{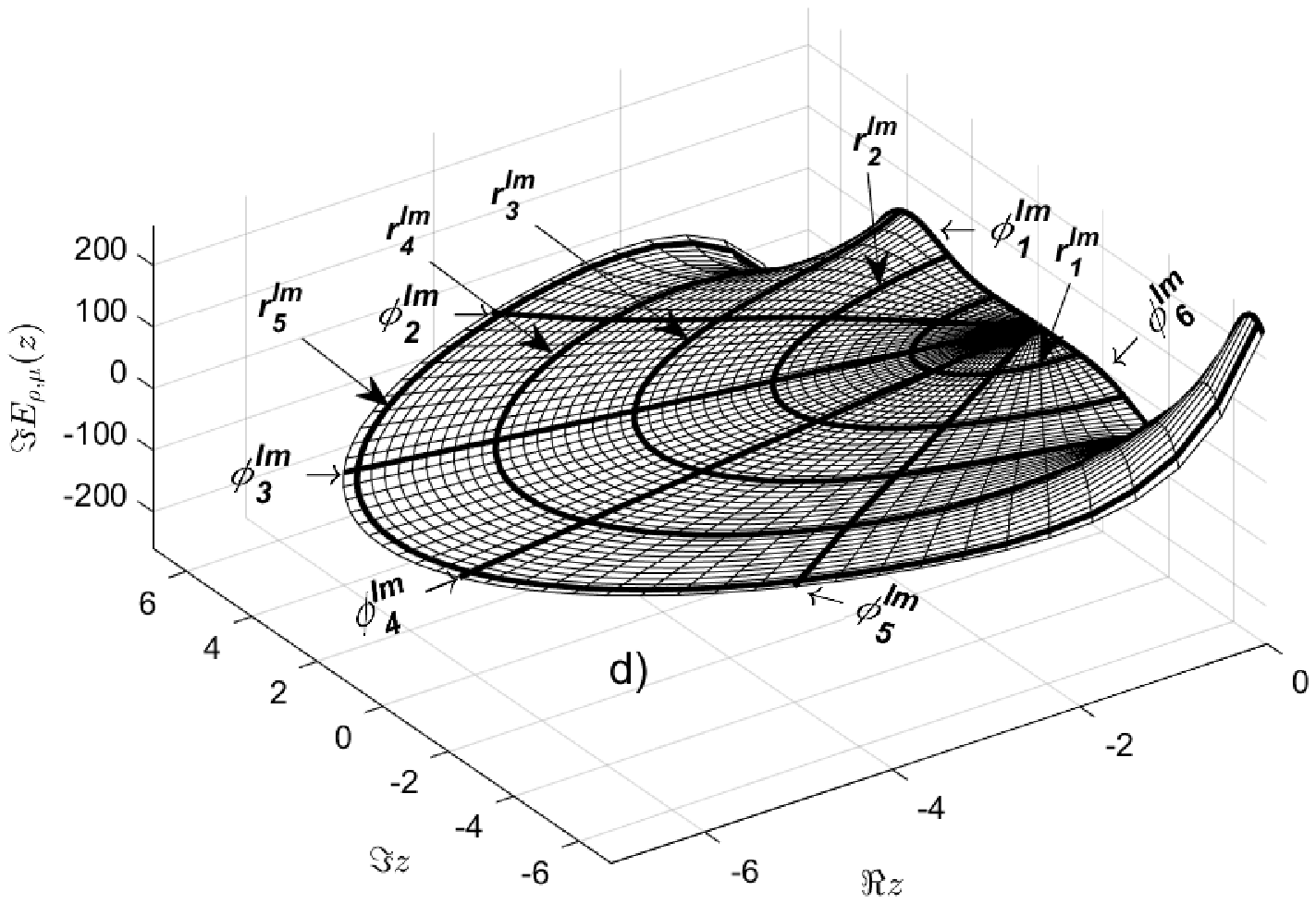}\\[4mm]
  \includegraphics[width=0.43\textwidth]{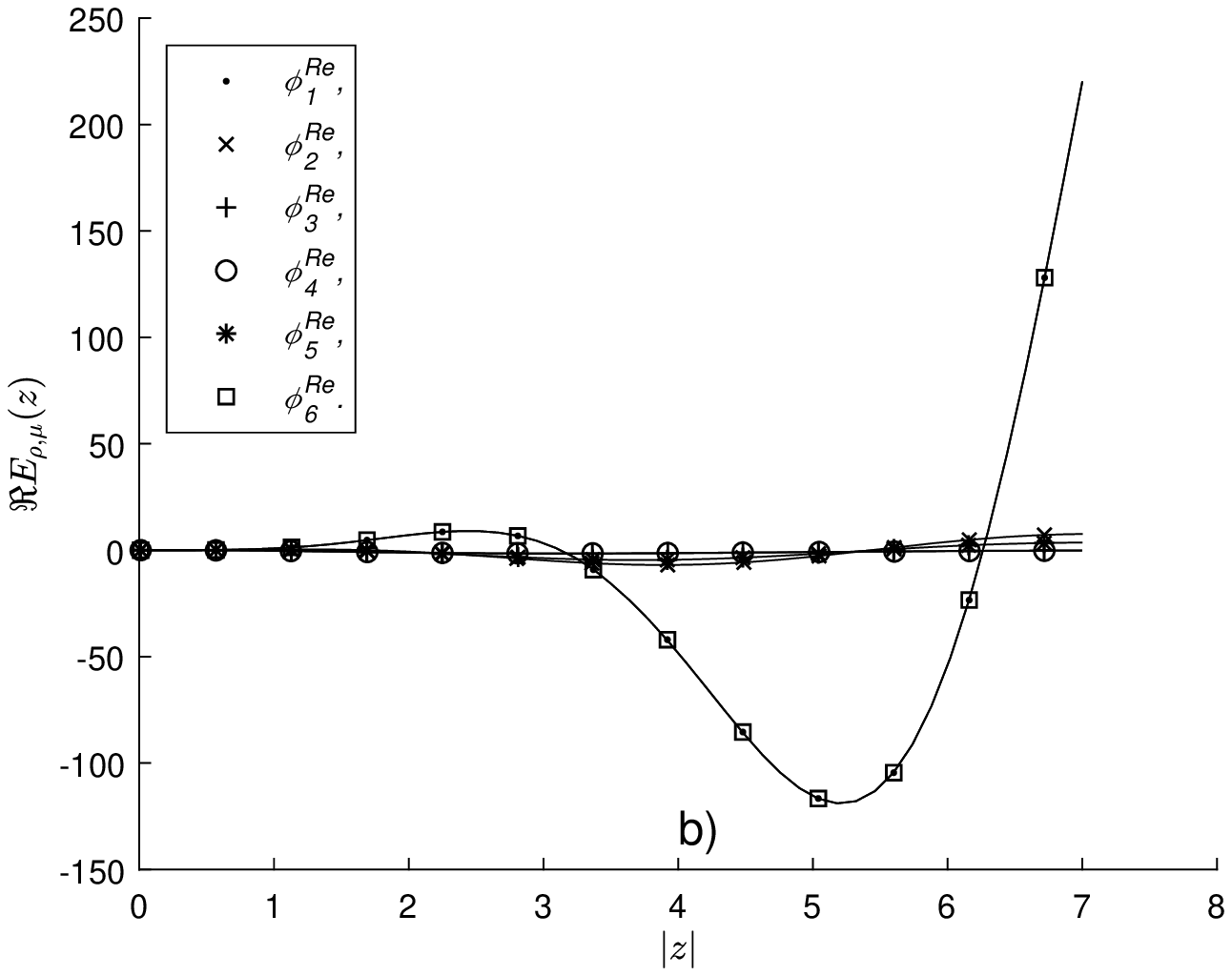}\hfill
  \includegraphics[width=0.43\textwidth]{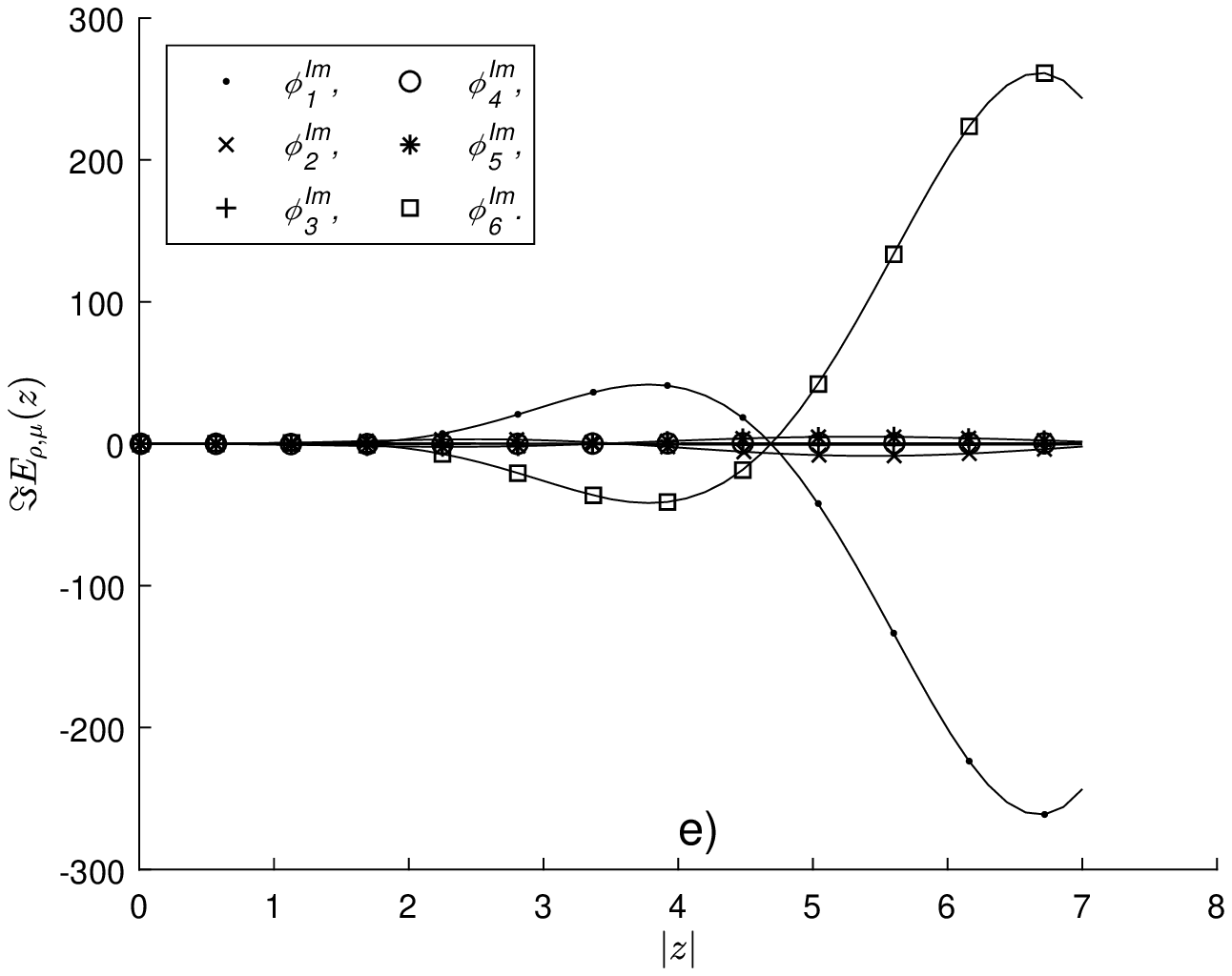}\\[4mm]
  \includegraphics[width=0.43\textwidth]{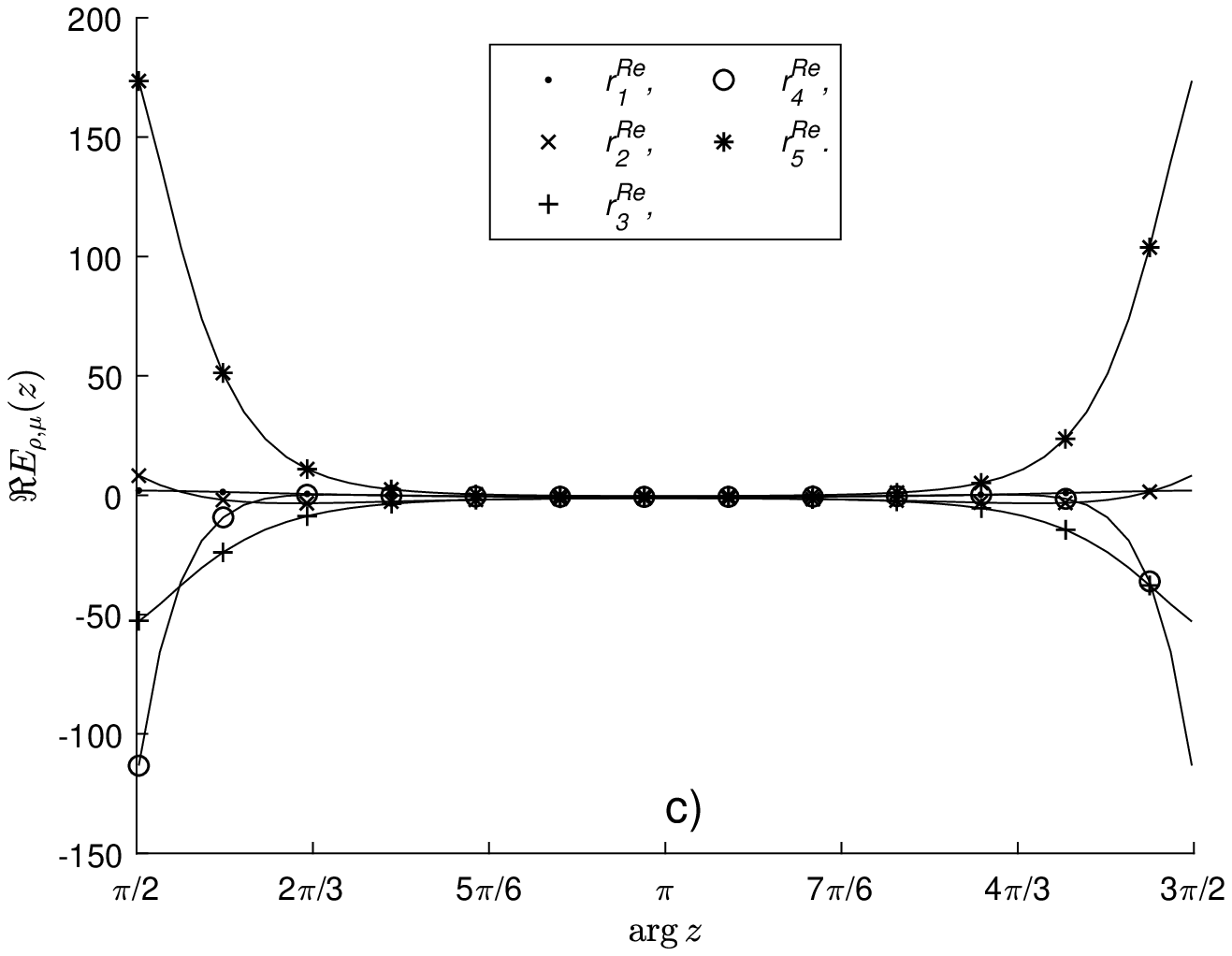}\hfill
  \includegraphics[width=0.43\textwidth]{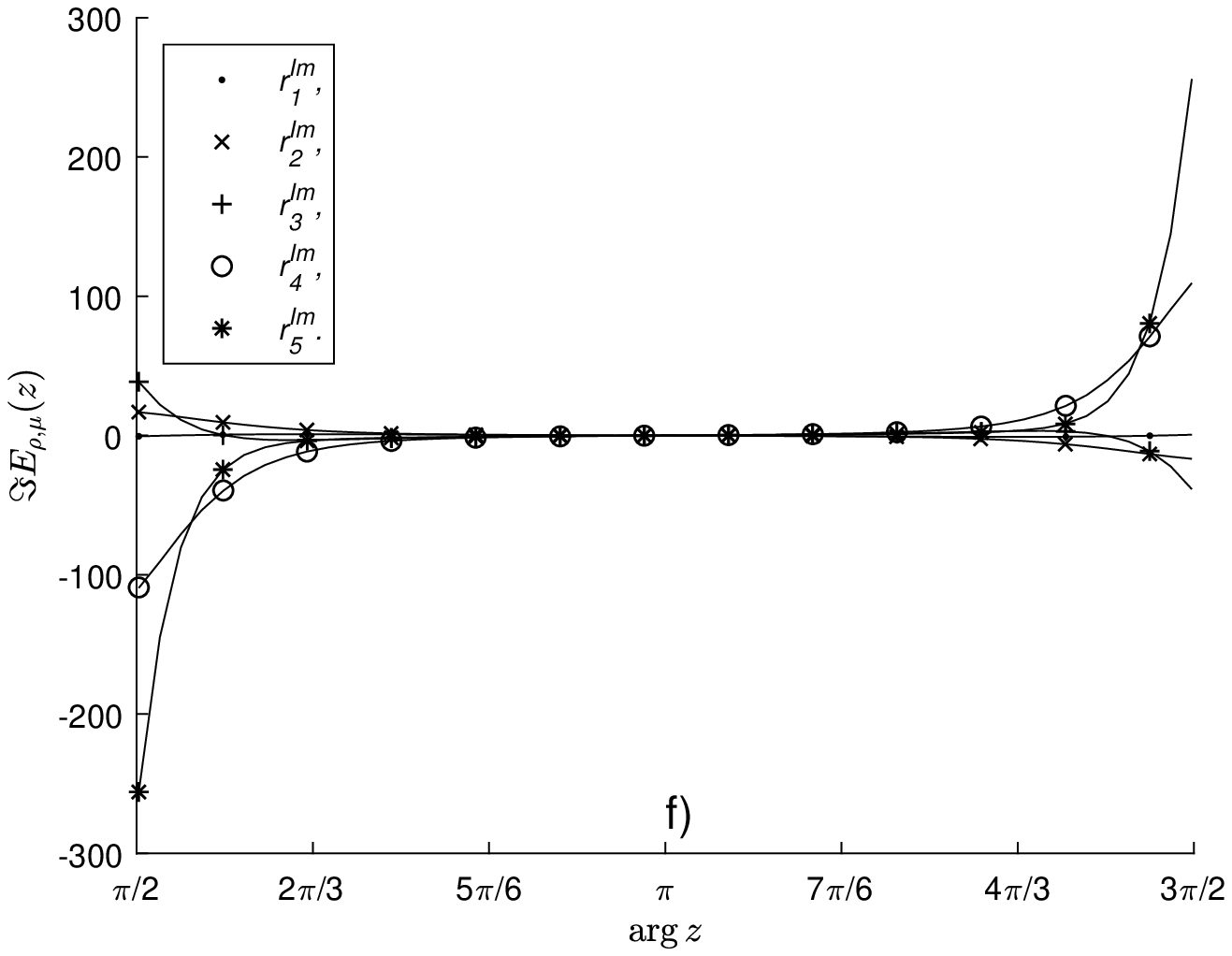}
  \caption{The function $E_{\rho,\mu}(z)$ for $\rho=1, \mu=-2$ and $\delta_{1\rho}=\pi$, $\delta_{2\rho}=\pi$, $0.01\leqslant|z|\leqslant7, \pi/2<\arg z<3\pi/2$.  On the figures a) and d) the surfaces – the formula (\ref{eq:MLF_int3_case4}), the curves – the formula (\ref{eq:MLF_mu<=1}). On the figures b), c), e) and f) the curves – the formula (\ref{eq:MLF_int3_case4}), the points – the formula (\ref{eq:MLF_mu<=1})
  }\label{fig:MLF_int3_case4_rho1_mu_2_BA}
\end{figure}

\begin{figure}
  \centering
  \includegraphics[width=0.43\textwidth]{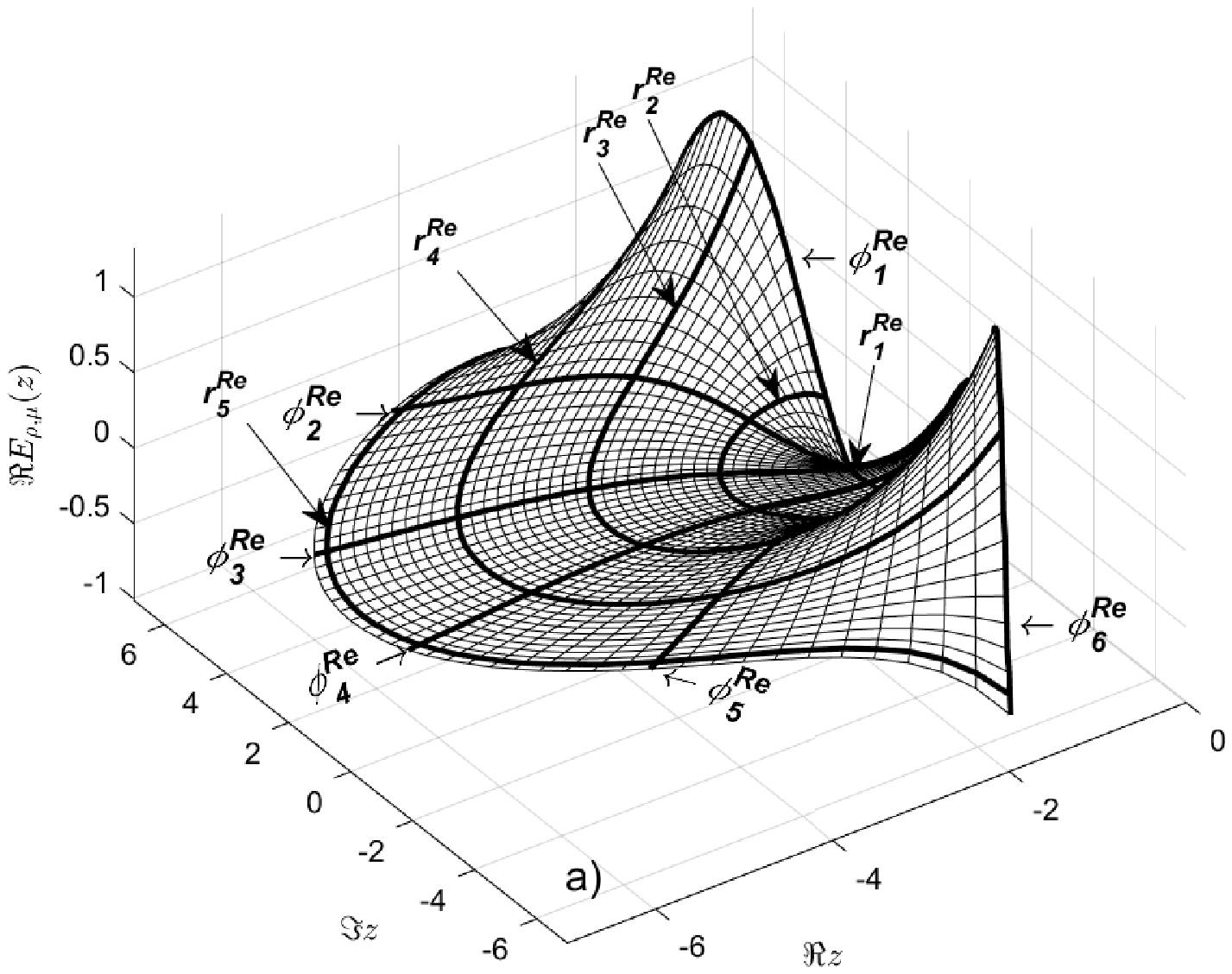}\hfill
  \includegraphics[width=0.43\textwidth]{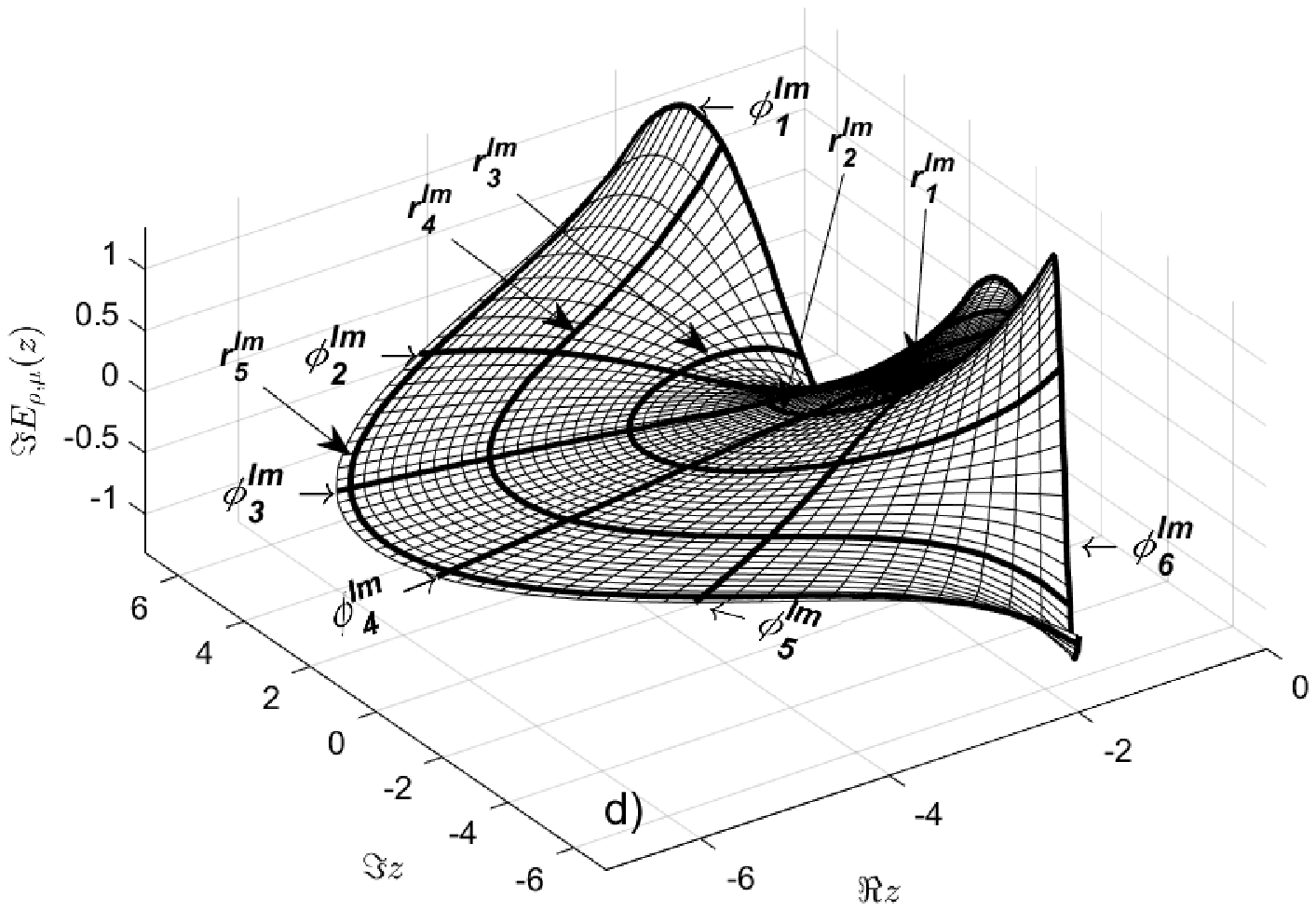}\\[4mm]
  \includegraphics[width=0.43\textwidth]{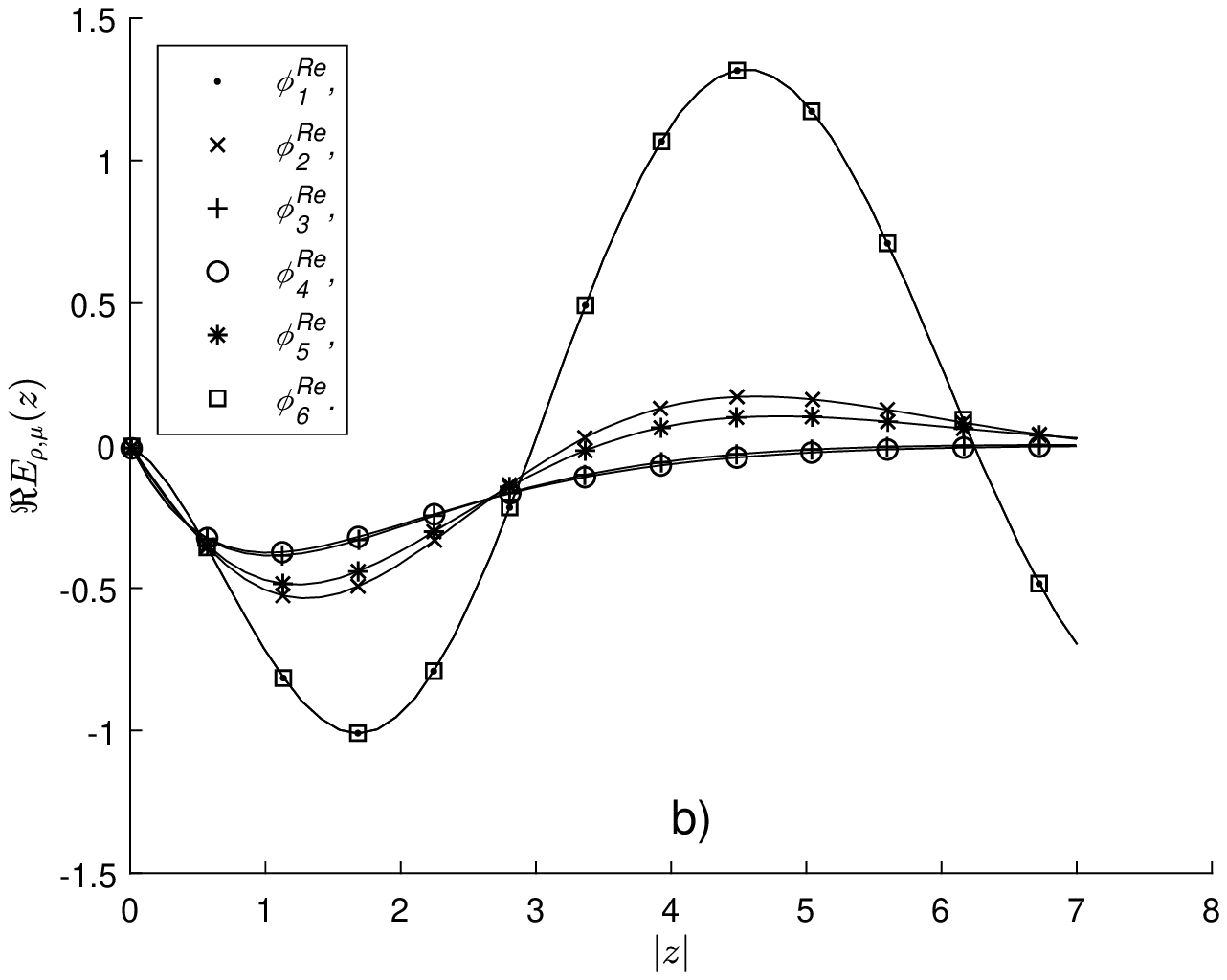}\hfill
  \includegraphics[width=0.43\textwidth]{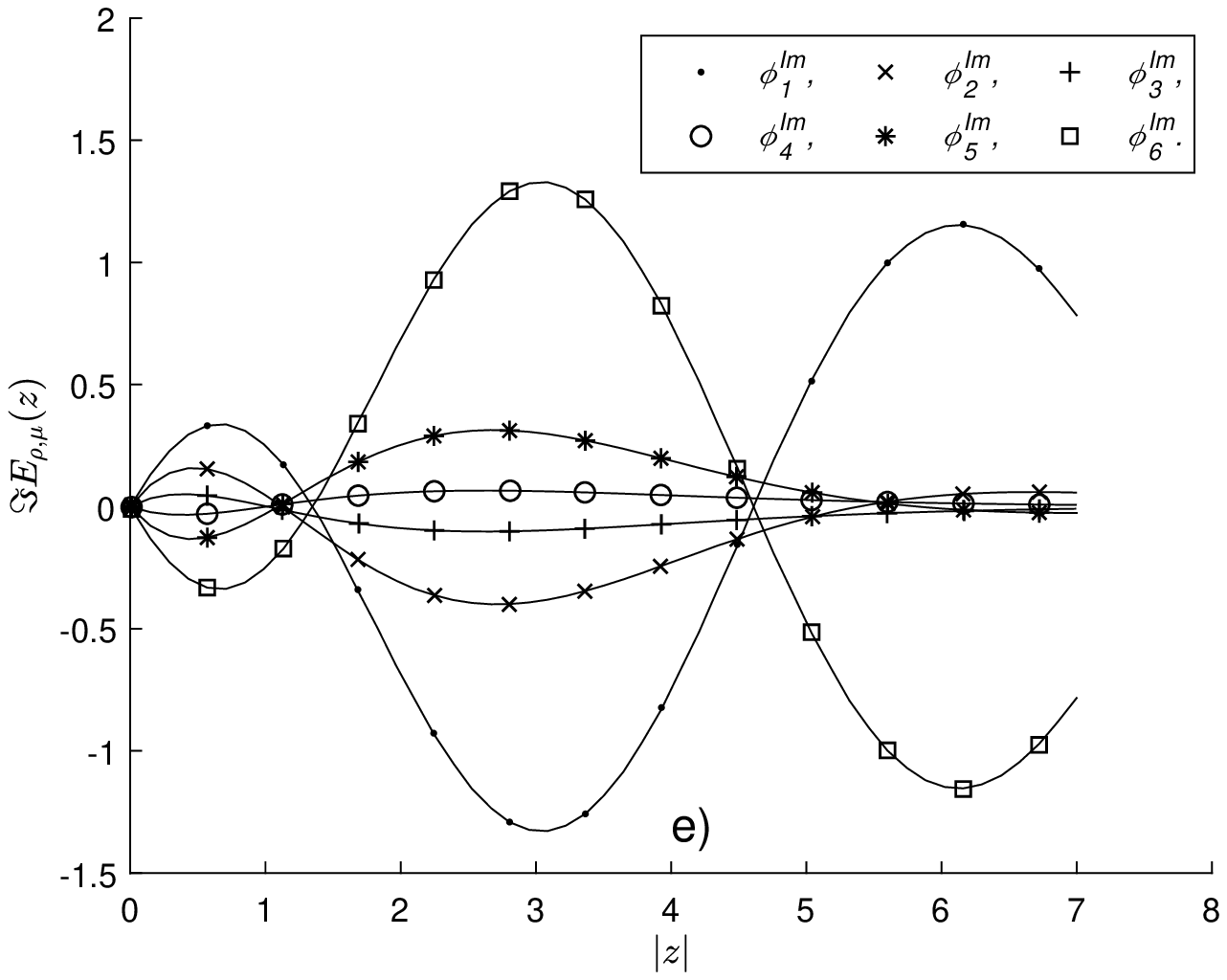}\\[4mm]
  \includegraphics[width=0.43\textwidth]{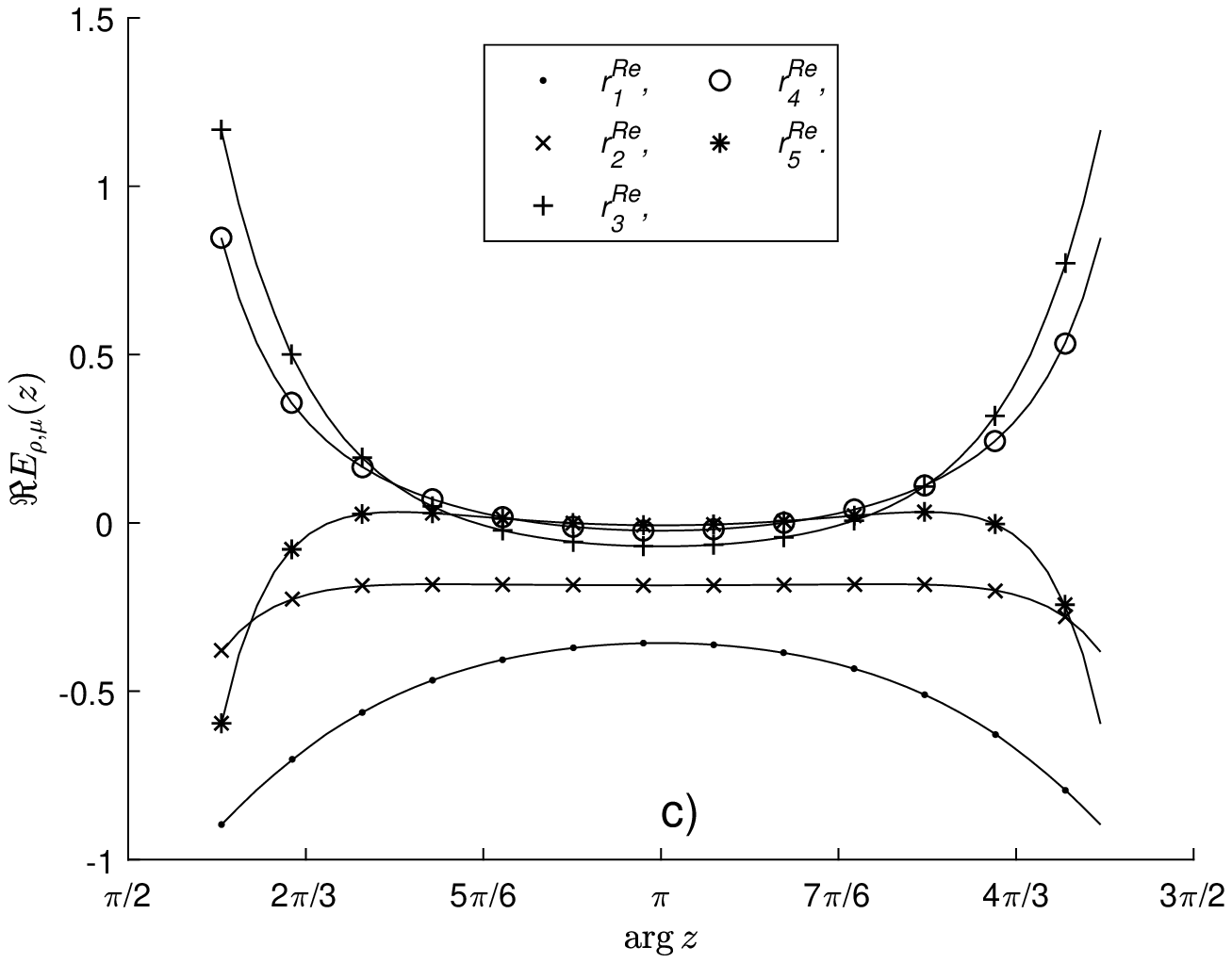}\hfill
  \includegraphics[width=0.43\textwidth]{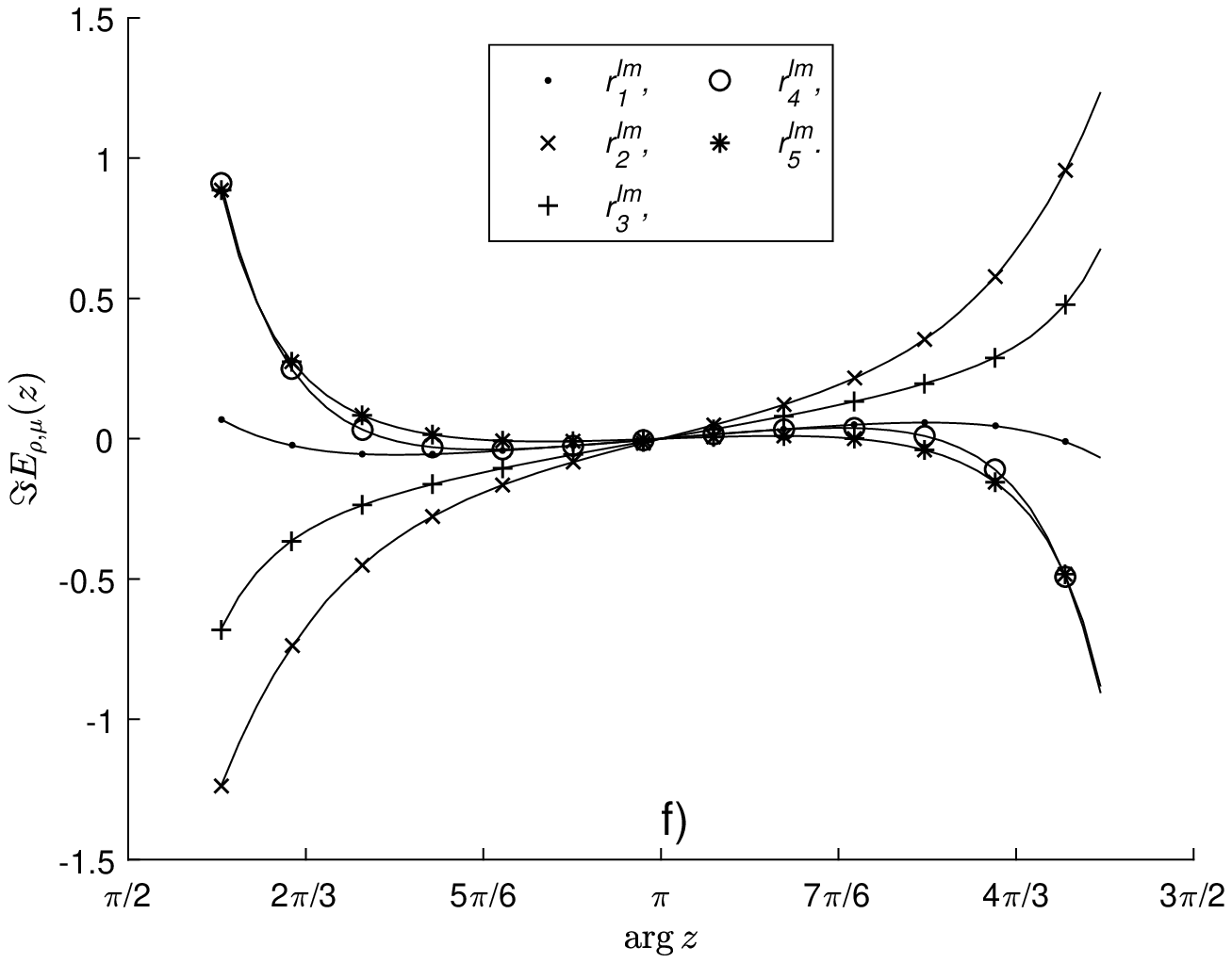}
  \caption{The function $E_{\rho,\mu}(z)$ for $\rho=1, \mu=0$ and $\delta_{1\rho}=11\pi/12$, $\delta_{2\rho}=11\pi/12$, $0.01\leqslant|z|\leqslant7, 7\pi/12<\arg z<17\pi/12$.  On the figures a) and d)  the surfaces – the formula (\ref{eq:MLF_int3_deltaRho}), the curves – the formula (\ref{eq:MLF_mu<=1}). On the figures b), c), e) and f) The curves – the formula (\ref{eq:MLF_int3_deltaRho}), the points – the formula (\ref{eq:MLF_mu<=1})
  }\label{fig:MLF_int3_deltaRho_rho1_mu0_BB}
\end{figure}

In Fig.~\ref{fig:MLF_int3_case1_rho1_mu1_BA},~\ref{fig:MLF_int3_case2_rho1_mu_1_BA},~\ref{fig:MLF_int3_case4_rho1_mu_2_BA},~\ref{fig:MLF_int3_deltaRho_rho1_mu0_BB} the results of calculating the function $E_{\rho,\mu}(z)$ are given with the use of the formulas obtained in the theorem which has just been proved. In Fig.~\ref{fig:MLF_int3_case1_rho1_mu1_BA} the function $E_{\rho,\mu}(z)$ is given for the parameter values $\rho=1, \mu=1$ and $\delta_{1\rho}=35\pi/36$, $\delta_{2\rho}=35\pi/36$. The Mittag-Leffler function was calculated with the use of the formula (\ref{eq:MLF_int3}). In  Fig.~\ref{fig:MLF_int3_case2_rho1_mu_1_BA} the function $E_{\rho,\mu}(z)$ is given for the parameter values $\rho=1, \mu=-1$ and $\delta_{1\rho}=\pi$, $\delta_{2\rho}=5\pi/6$ and was calculated according to the formula (\ref{eq:MLF_int3_case2}) In Fig.~\ref{fig:MLF_int3_case4_rho1_mu_2_BA} the function $E_{\rho,\mu}(z)$ is constructed for the parameter values $\rho=1, \mu=-2$ and $\delta_{1\rho}=\pi$, $\delta_{2\rho}=\pi$ and was calculated by the formula (\ref{eq:MLF_int3_case4}). In Fig.~\ref{fig:MLF_int3_deltaRho_rho1_mu0_BB} the function $E_{\rho,\mu}(z)$ was calculated by the formula (\ref{eq:MLF_int3_deltaRho}) for the parameter values $\rho=1, \mu=0$ and $\delta_{1\rho}=11\pi/12$, $\delta_{2\rho}=11\pi/12$. In these figures we can see the following: in Figure~a) the real part  $E_{\rho,\mu}(z)$ is given and in Figure~d) the imaginary part of the function $E_{\rho,\mu}(z)$ calculated according to the formulas obtained in Theorem~\ref{lemm:MLF_int3}. The range of change of the argument $z$ of the function $E_{\rho,\mu}(z)$ for each figure is given in the line below the figure. To verify the validity of the calculations made in the figures the calculation results of the function $E_{\rho,\mu}(z)$ are given according to the formula (\ref{eq:MLF_mu<=1}). The values $\Re E_{\rho,\mu}(z)$ and $\Im E_{\rho,\mu}(z)$ calculated by the formula (\ref{eq:MLF_mu<=1}) are shown in Figures  a) and d) in thick curves. The fixed values $\arg z$ correspond to the thick curves $\phi_i^{Re},\ i=1,\dots,6$ in Fig.~a) and the curves $\phi_i^{Im},\ i=1,\dots,6$ in Fig.~d). The fixed values $|z|$ correspond to the thick curves  $r_i^{Re},\ i=1,\dots,5$ in Fig.~a) and the curves $r_i^{Im},\ i=1,\dots,6$ in Fig.~d). As one can see from the figures, these curves lie exactly on the surface of the function $E_{\rho,\mu}(z)$ calculated according to the formulas obtained in Theorem~\ref{lemm:MLF_int3}. To be convinced that the calculation results by these formulas coincide absolutely with the calculation results according to the formula (\ref{eq:MLF_mu<=1}) in Fig.~b) and Fig.~e) the plots of $\Re E_{\rho,\mu}(z)$ and $\Im E_{\rho,\mu}(z)$ against $|z|$ are given at the fixed value of $\arg z$. The values $\arg z$, for which these plots were constructed, correspond to the curves  $\phi_i^{Re},\ i=1,\dots,6$ in Fig.~a) and the curves $\phi_i^{Im},\ i=1,\dots,6$ in Fig.~d). In Fig.~b) and Fig.~e) the solid curves correspond to the formulas obtained in Theorem~\ref{lemm:MLF_int3}. The points in these figures correspond to the formula (\ref{eq:MLF_mu<=1}). As one can see, the calculation results by the formulas from Theorem~\ref{lemm:MLF_int3} and by the formula (\ref{eq:MLF_mu<=1}) absolutely coincide. In Fig.~c) and Fig.~f) the plots of $\Re E_{\rho,\mu}(z)$ and $\Im E_{\rho,\mu}(z)$ against $\arg z$ are given at the fixed value $|z|$. The values $|z|$, for which these plots were constructed, correspond to the values  $|z|$ of the curved lines $r_i^{Re}$ , $i=1,\dots,5$ in Fig.~a) and $r_i^{Im}$, $i=1,\dots,5$ in Fig.~d). As in the previous case, the solid curves are the calculation results according to the formulas from Theorem~\ref{lemm:MLF_int3}, the points are the calculation results according to the formula (\ref{eq:MLF_mu<=1}). As one can see from the graphs given, the calculation results for these two formulas absolutely coincide, which confirms the correct representation of the integral representations obtained in Theorem~\ref{lemm:MLF_int3}.

As one can see, in Fig.~\ref{fig:MLF_int2_rho1_mu0}~-~\ref{fig:MLF_int3_deltaRho_rho1_mu0_BB}  the calculation results are given  for the value of the parameter  $\rho=1$. This is connected with the fact that integral representations formulated in Theorems~\ref{lemm:MLF_int2},~\ref{lemm:MLF_int3} and Corollary~\ref{coroll:MLF_int2_delatRho} were obtained in the assumption $\rho>1/2$. In this range of values of the parameter $\rho$ only in the case $\rho=1$ and integer values $\mu$ the Mittag-Leffler function is expressed in terms of elementary functions (see Corollary 1 in \cite{Saenko2020a}). Therefore, to verify the correctness the integral representations obtained the calculations were made for the case $\rho=1$. To check the validity of the integral representations obtained with other values of the parameters $\rho$ and $\mu$ it is necessary to use computer codes that allow one to calculate the function $E_{\rho,\mu}(z)$.  For this purpose the code \verb"ml.m" \cite{Garrappa2015a} was used. This code was based on the results of the works \cite{Popolizio2014,Garrappa2015} in which the authors propose to calculate the inverse Laplace transform from the Laplace image of the function $E_{\rho,\mu}(z)$.

The results of calculating the function $E_{\rho,\mu}(z)$ with the help of the representation (\ref{eq:MLF_int2_deltaRho}) for the parameter values $\rho=1.3, \mu=2.7$ and $\delta_{\rho}=\pi/\rho$ are given in Fig.~\ref{fig:MLF_int2_deltaRho_rho13_mu27_AB_Garrappa}. In Fig.~\ref{fig:MLF_int2_deltaRho_rho13_mu27_AB_Garrappa}~a) and Fig.~\ref{fig:MLF_int2_deltaRho_rho13_mu27_AB_Garrappa}~d) the thick curves correspond to the calculation results with the use of the code \verb"ml.m". In Fig.~b),~c),~e),~f) the solid curves – the  representation (\ref{eq:MLF_int2_deltaRho}), the markers – the algorithm \verb"ml.m". As one can see from the above calculations, the results coincide, which confirms the correctness of the obtained integral representations.

\begin{figure}
  \centering
  \includegraphics[width=0.43\textwidth]{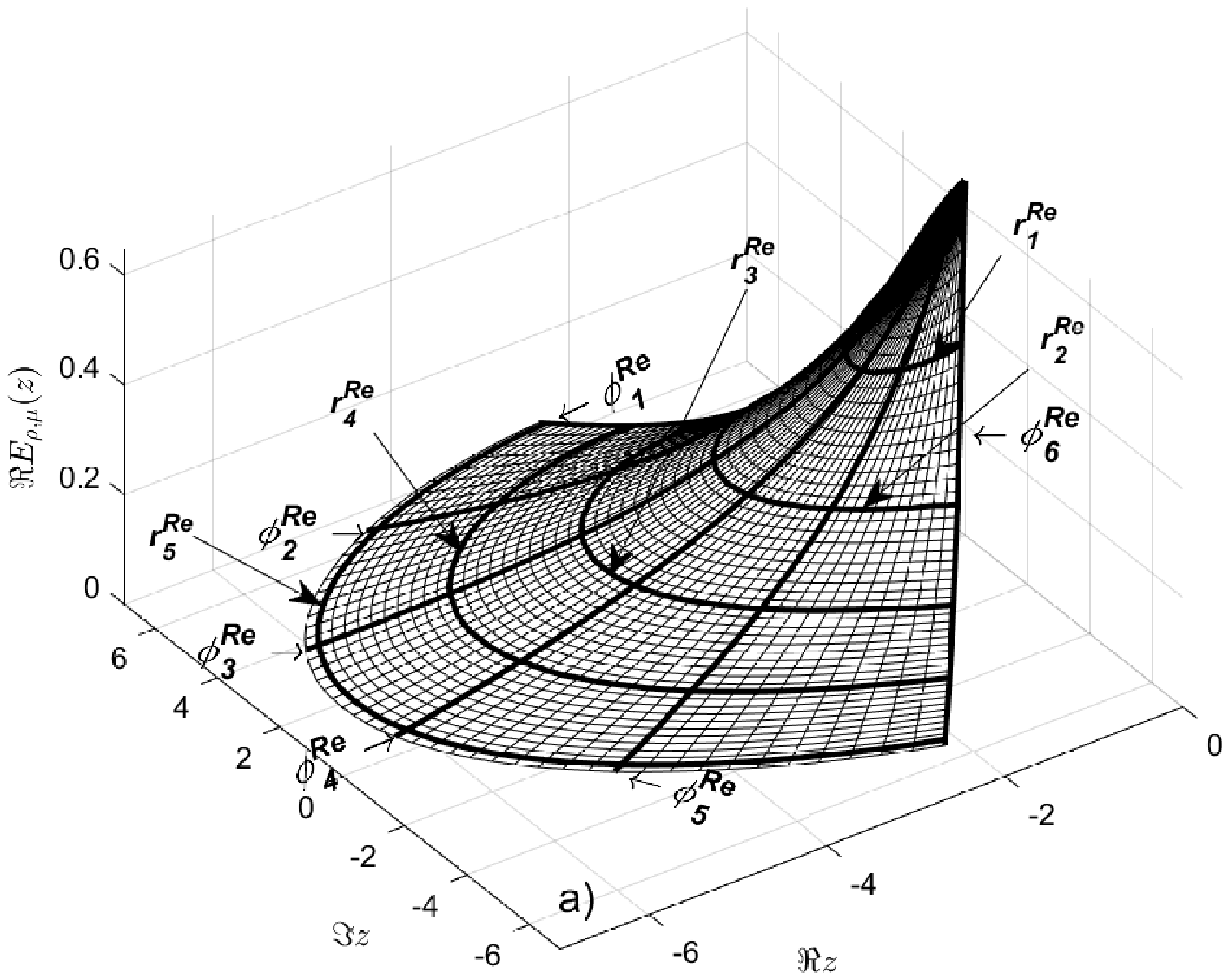}\hfill
  \includegraphics[width=0.43\textwidth]{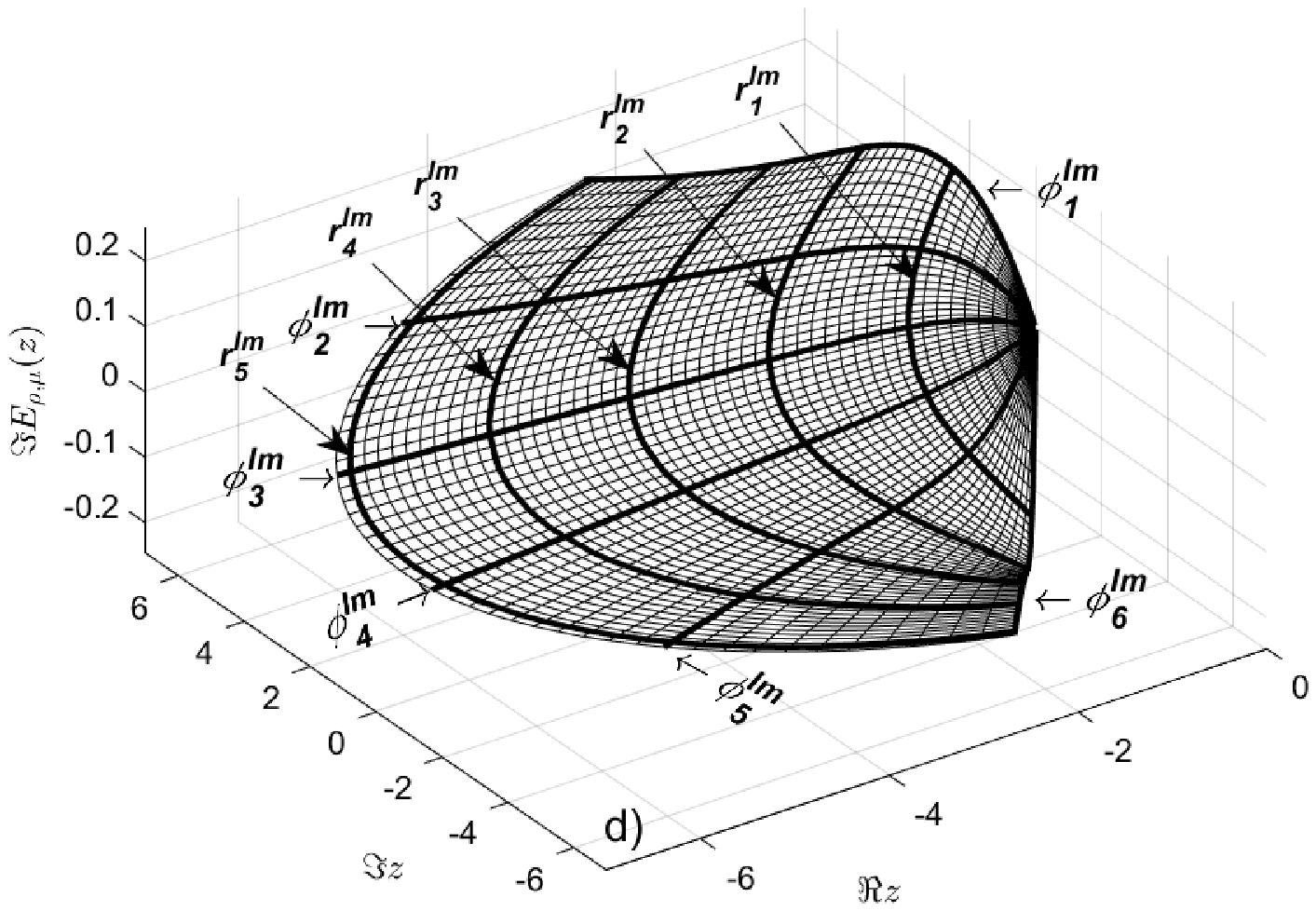}\\[4mm]
  \includegraphics[width=0.43\textwidth]{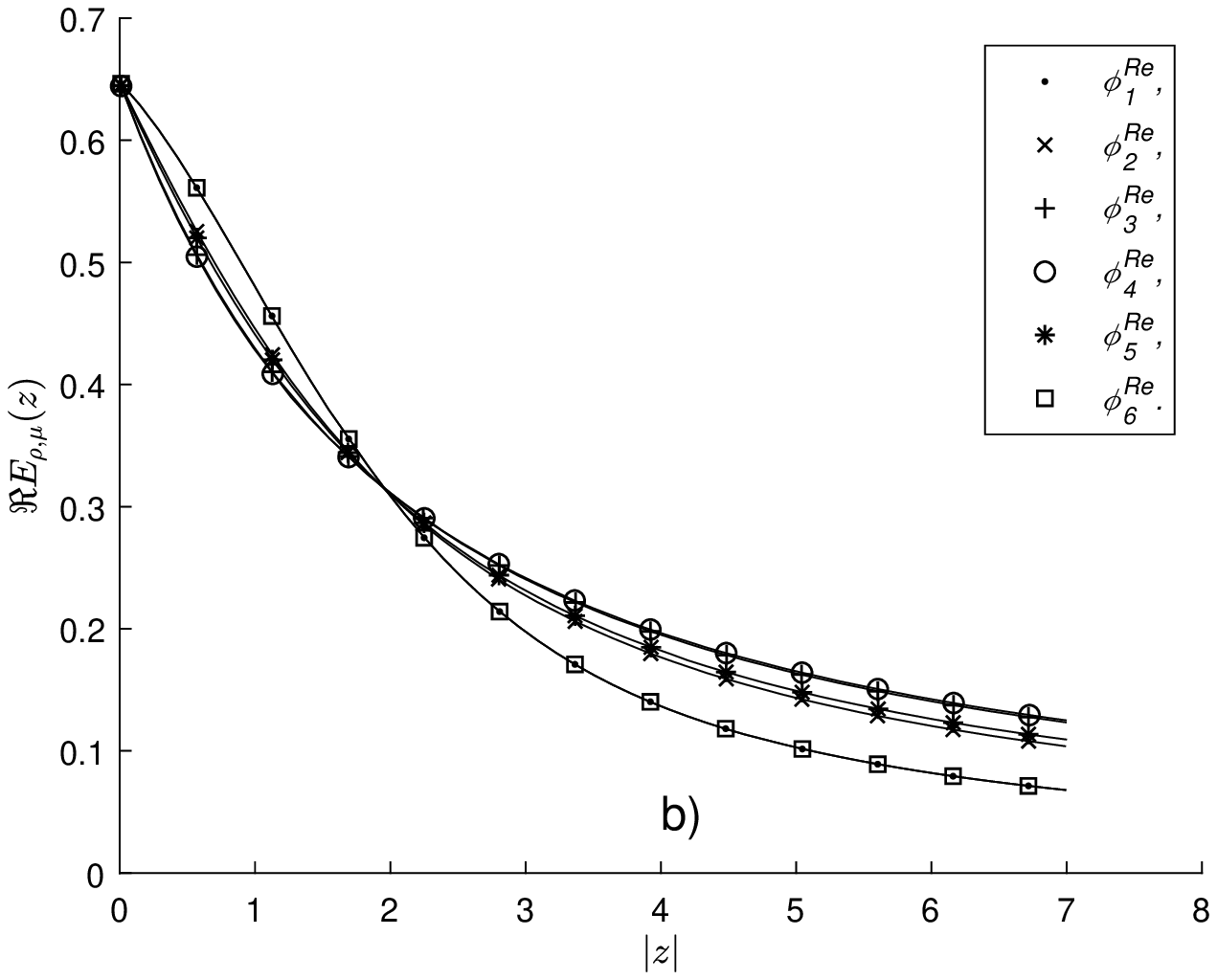}\hfill
  \includegraphics[width=0.43\textwidth]{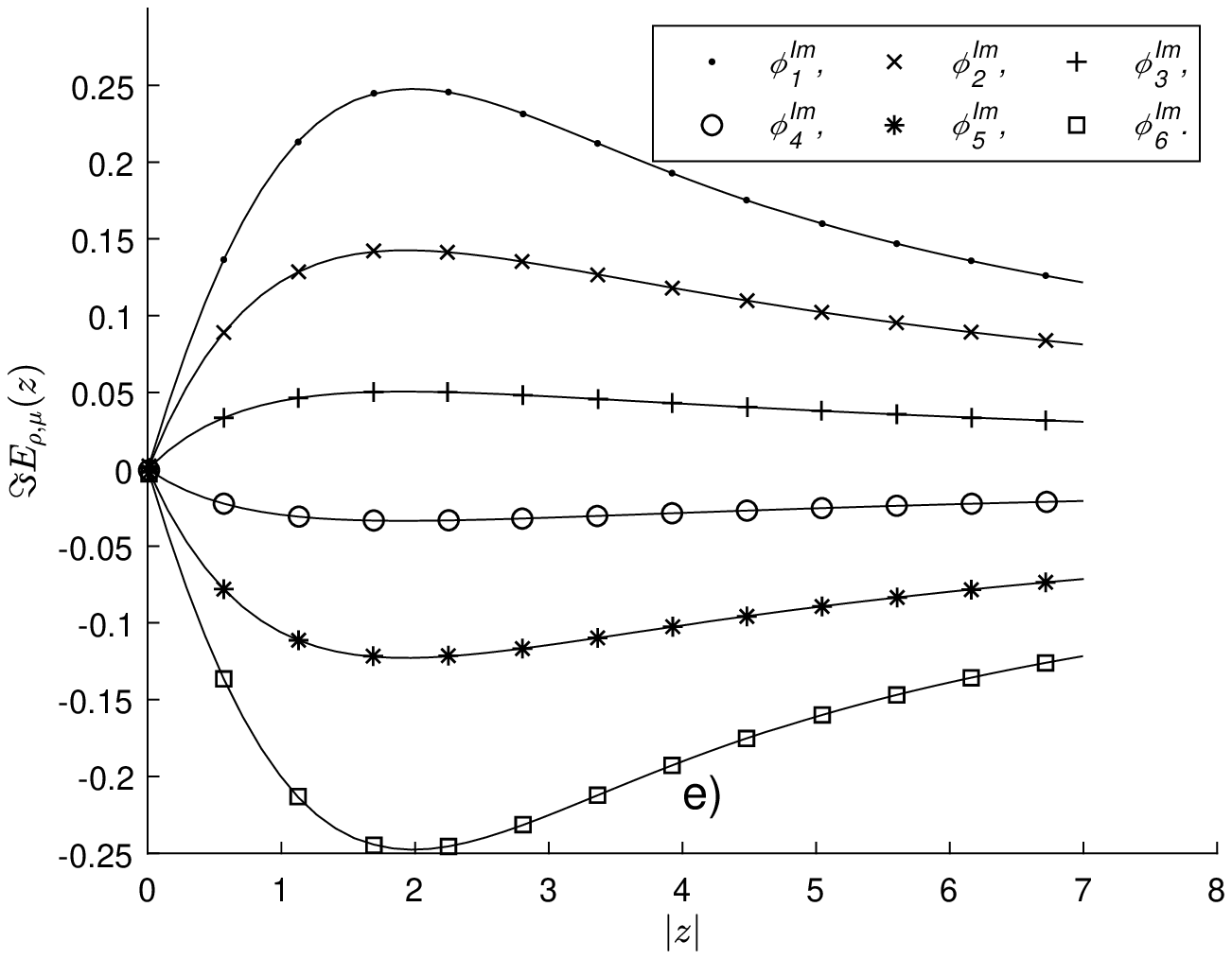}\\[4mm]
  \includegraphics[width=0.43\textwidth]{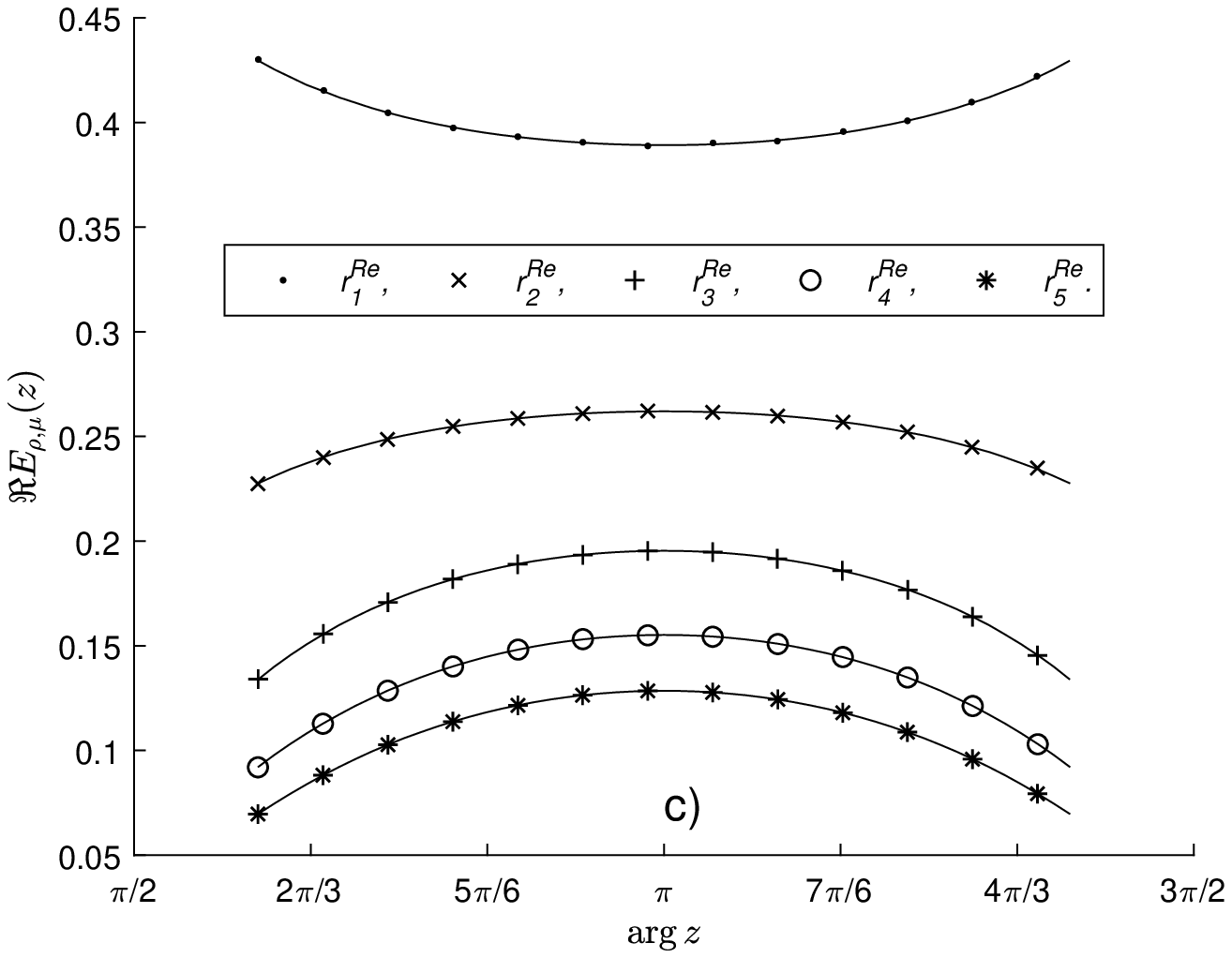}\hfill
  \includegraphics[width=0.43\textwidth]{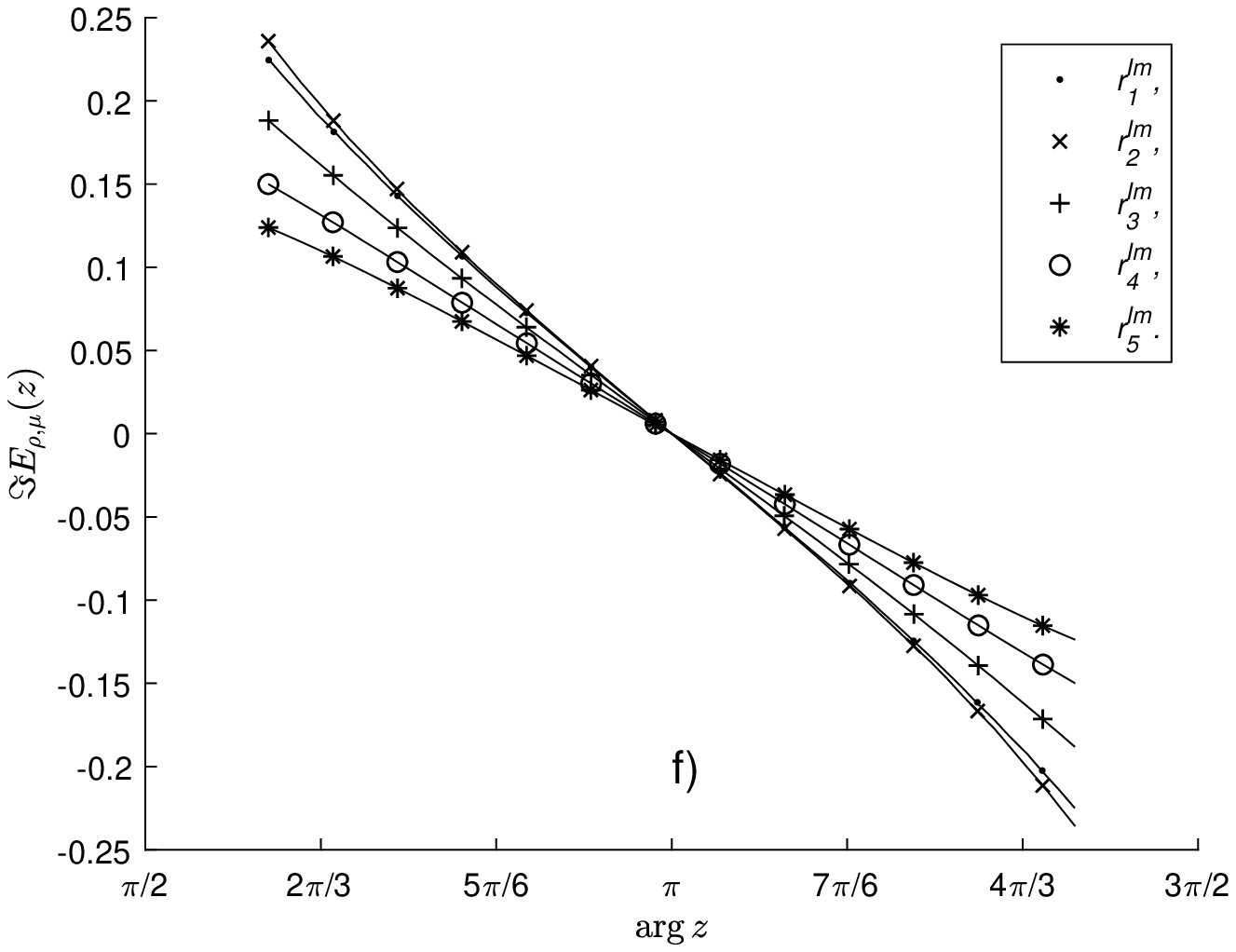}
  \caption{The function $E_{\rho,\mu}(z)$ for $\rho=1.3, \mu=2.7$ and $\delta_{\rho}=\pi/\rho$, $0.01\leqslant|z|\leqslant7, -\pi/(2\rho)+\pi<\arg z<\pi/(2\rho)+\pi$.  On the figures a) and d) the surfaces – the formula (\ref{eq:MLF_int2_deltaRho}), the curves – the algorithm \cite{Garrappa2015a}. On the figures b), c), e) and f) the curves – the formula (\ref{eq:MLF_int2_deltaRho}), the points – the algorithm \cite{Garrappa2015a}
  }\label{fig:MLF_int2_deltaRho_rho13_mu27_AB_Garrappa}
\end{figure}

It should be noted that when applying the above algorithms for numerical integration, the value of the function $E_{\rho,\mu}(z)$ can only be calculated in a small range of values $|z|$ and parameters $\rho$ and $\mu$. The point is that the integrands
in the equations (\ref{eq:MLF_int2}), (\ref{eq:MLF_int2_deltaRho}), (\ref{eq:MLF_int2_piRho}), (\ref{eq:MLF_int3}), (\ref{eq:MLF_int3_case2}), (\ref{eq:MLF_int3_case3}), (\ref{eq:MLF_int3_case4}), (\ref{eq:MLF_int3_deltaRho}), (\ref{eq:MLF_int3_piRho})  are oscillating functions  and  with an increase in the values $t$ and $\rho$ the amplitude of these oscillations increases. As a result, at some values $t$ (here $t$ corresponds to $|z|$) and values of the parameter $\rho$ the algorithm of numerical integration cannot calculate the corresponding integral. To get round this difficulty in those areas where an integral representation is impossible to calculate, to calculate the Mittag-Leffler function it is necessary to use other representations for the Mittag-Leffler function, for example in the form of a series, as it was done in the works  \cite{Gorenflo2002b,Hilfer2006,Seybold2009}.

\section{Conclusion}

The integral representations for the function $E_{\rho,\mu}(z)$ are obtained in the article. These integral representations depend only real arguments and parameters and consist of the sum of definite and improper integrals. Such form of these representations allow use standard quadrature numerical integration methods for calculation these integrals. In this paper we used the adaptive numerical integration algorithm using the Gauss-Kronrod rule. In particular, the implementation of this algorithm in the GSL library \cite{gsl_lib} was used. The performed calculations (see Fig.~\ref{fig:MLF_int2_rho1_mu0}~-~\ref{fig:MLF_int3_deltaRho_rho1_mu0_BB}) showed the exact coincidence of the calculation results using the obtained formulas with special cases in which the expressions of the function $E_{\rho,\mu}(z) $ in terms of elementary functions are known. This confirms the validity of integral representations obtained in  Theorems~\ref{lemm:MLF_int2},~\ref{lemm:MLF_int3} and Corollary~\ref{coroll:MLF_int2_delatRho}.

As was noted in Introduction, this paper is the final one in a cycle of works. A new form of integral representation for the function  $E_{\rho,\mu}(z)$ was introduced in the paper \cite{Saenko2020}. In the next work  \cite{Saenko2020a} singular points of this integral representation were studied. In the paper \cite{Saenko2020d} the transition was made from integration over a complex variable to integration over real variables. Despite the fact that all integral representations obtained in these works were thoroughly proved, the issue of correctness of the formulas derived remained open. One of the ways of verifying the obtained integral representations consists in comparing the values of the function  $E_{\rho,\mu}(z)$ calculated by means of the integral representations obtained and with the use of the known expressions for this function. In this paper, such calculations were made and correctness of these representations was shown.

Summarizing the results of the previous papers \cite{Saenko2020,Saenko2020d} and this article we have the following. Integral representations formulated in Theorems~\ref{lemm:MLF_int2},~\ref{lemm:MLF_int3} and Corollary~\ref{coroll:MLF_int2_delatRho} of this paper is a consequence of the integral representations formulated in the paper \cite{Saenko2020d} (see Theorems~2,~3, Corollary~1,~3 in \cite{Saenko2020d}). In turn, integral representations formulated in the paper  \cite{Saenko2020d} is a consequence of the integral representation of the function $E_{\rho,\mu}(z)$ formulated in the paper  \cite{Saenko2020}. Thus, the validity of Theorems~\ref{lemm:MLF_int2},~\ref{lemm:MLF_int3} and Corollary~\ref{coroll:MLF_int2_delatRho} entails the validity of the integral representation for the function $E_{\rho,\mu}(z)$ obtained in the paper \cite{Saenko2020} and also integral representations for this function which were obtained in the article \cite{Saenko2020d}.

In conclusion, we would like to point out several specific features of the integral representations obtained.
As we can see from the given figures, the integral representations obtained in Theorems~\ref{lemm:MLF_int2},~\ref{lemm:MLF_int3} and Corollary~\ref{coroll:MLF_int2_delatRho} make it possible to calculate the value of the function $E_{\rho,\mu}(z)$ only for in the region $\Re z<0$. If to be more precise, then for the values of $\arg z$ satisfying the condition  $\frac{\pi}{2\rho}-\delta_{2\rho}+\pi <\theta< -\frac{\pi}{2\rho} +\delta_{1\rho}+\pi$. Recall that  $z$  was represented in the form $z=te^{i\theta}$. From this it is clear that at the fixed value $\rho$ the largest range  $\theta$ is reached at $\delta_{1\rho}=\delta_{2\rho}=\pi/\rho$. In this case, we get that $-\frac{\pi}{2\rho}+\pi<\theta< \frac{\pi}{2\rho}+\pi$. At the value $\rho=1$ the region of admissible values for $\theta$ takes the form $\pi/2<\theta<3\pi/2$ which corresponds to the largest range of admissible values. It should be noted that  $\theta$ can not take the extreme values of this range. In fact, if to suppose that $\theta=\frac{\pi}{2\rho}-\delta_{2\rho}+\pi$ or $\theta=\frac{\pi}{2\rho}-\delta_{2\rho}+\pi$, then in this case improper integrals that are included in the representations (\ref{eq:MLF_int2}), (\ref{eq:MLF_int2_deltaRho}), (\ref{eq:MLF_int2_piRho}), (\ref{eq:MLF_int3}), (\ref{eq:MLF_int3_case2}), (\ref{eq:MLF_int3_case3}), (\ref{eq:MLF_int3_case4}), (\ref{eq:MLF_int3_deltaRho}), (\ref{eq:MLF_int3_piRho}) at $r\to\infty$ will diverge.

The condition $\frac{\pi}{2\rho}-\delta_{2\rho}+\pi <\theta< -\frac{\pi}{2\rho} +\delta_{1\rho}+\pi$ significantly constraints the region of admissible values $\arg z$, at which the obtained integral representations are true. To expand the region of admissible values it is necessary to get back to the paper \cite{Saenko2020}. In this paper it was shown that the Hankel contour of integration in the integral representation of the Gamma function can be rotated within the defined angle sector and can be mapped on any predetermined range of angles.  Thus, using this property one can obtain integral representations for the function $E_{\rho,\mu}(z)$ in which $\arg z$ can take values from the regions that are beyond the range $\frac{\pi}{2\rho}-\delta_{2\rho}+\pi <\theta< -\frac{\pi}{2\rho} +\delta_{1\rho}+\pi$. However, the study of the possibility of such an expansion requires additional research.

\section*{Acknowledgements}
The author thanks M.~Yu.~Dudikov for the translation of the paper into English.

\section*{Funding}
This work was supported by the Russian Foundation for Basic Research (projects No 19-44-730005, 20-07-00655).

\end{document}